\newtheorem{thm}{Theorem}[section]
\newtheorem{cor}[thm]{Corollary}
\newtheorem{lem}[thm]{Lemma}
\newtheorem{conj}[thm]{Conjecture}
\theoremstyle{plain} 
\newcommand{\thistheoremname}{}
\newtheorem*{genericthm}{\thistheoremname}
\newenvironment{nthm}[1]
  {\renewcommand{\thistheoremname}{#1}%
   \begin{genericthm}}
  {\end{genericthm}}
\theoremstyle{definition}
\theoremstyle{remark}
\newtheorem{rem}[thm]{Remark}
\newtheorem*{xrem}{Remark}
\newtheorem*{prob}{Problem}
\newtheorem*{ntt}{Notation}
\numberwithin{equation}{section}
\newcommand{\N}{\mathbb{N}}      
\newcommand{\Z}{\mathbb{Z}}      
\newcommand{\R}{\mathbb{R}}      
\newcommand{\eps}{\varepsilon}   
\newcommand{\negphantom}[1]{\settowidth{\dimen0}{$\displaystyle #1$}\hspace*{-\dimen0}}
\newcommand{\E}{\mathbb{E}}   
\newcommand{\as}{\textnormal{a.s.}}
\newcommand{\A}{\mathscr{A}}
\renewcommand{\pmod}[1]{         
  ~(\mathrm{mod}~#1)}
\setlist[itemize]{leftmargin=*}
\setlist[enumerate]{leftmargin=*}
\begin{document}

\title{Representation functions with prescribed rates of growth}%
\author{Christian T\'afula}%
\address{Instituto de Matem\'atica, Estat\'istica\\
e Ci\^encia da Computa\c{c}\~ao\\
Universidade de S\~ao Paulo\\
Rua do Mat\~ao, 1010\\
S\~ao Paulo, SP 05508-090\\
Brazil}
\curraddr{}
\email{tafula@ime.usp.br}
\thanks{}

\subjclass[2020]{11B34, 11B13, 05D40, 26A12}%
\keywords{representation functions, probabilistic method, regular variation}%

 \begin{abstract}
  Fix an integer $h \geq 2$, and let $b_1, \ldots, b_h$ be (not necessarily distinct) positive integers with $\gcd(b_1, \ldots, b_h) = 1$. For any subset $A \subseteq \mathbb{N}$, let $r_A(n)$ denote the number of solutions $(k_1, \ldots, k_h) \in A^h$ to the equation
  \[ b_1 k_1 + \cdots + b_h k_h = n. \]
  Given a function $F$ satisfying $F(n) \leq r_{\mathbb{N}}(n)$, we ask: when does there exist a set $A \subseteq \mathbb{N}$ such that $r_A(n) \sim F(n)$? We prove that this is always possible when $F$ is regularly varying and satisfies $\lim_{n\to\infty} F(n)/\log n = \infty$.

  If one only requires $r_A(n) \asymp F(n)$, much weaker regularity conditions suffice: we show such a set $A$ exists for every increasing function $F$ satisfying $F(2x) \ll F(x)$ and $\log x \ll F(x) \ll x^{h-1}$.

  Finally, we give a probabilistic heuristic supporting the following: if $A \subseteq \mathbb{N}$ satisfies $\limsup_{n\to\infty} r_A(n)/\log n < 1$, then $r_A(n) = 0$ for infinitely many $n$.
 \end{abstract}

\maketitle

\section{Introduction}
 Fix an integer $h \geq 2$ and positive integers $b_1, \ldots, b_h \in \Z_{\geq 1}$ (not necessarily distinct) with $\gcd(b_1, \ldots, b_h) = 1$. For a subset $A\subseteq \Z_{\geq 0}$ and $1\leq \ell \leq h$, define for each $n\geq 0$ the function
 \begin{equation}
  r_{A,\ell}(n) = r^{(b_1,\ldots,b_h)}_{A,\ell} := \#\{(k_1, \ldots, k_{\ell}) \in A^{\ell} ~|~ b_1 k_1 + \cdots + b_{\ell} k_{\ell} = n \}; \label{mainEq}
 \end{equation}
 i.e., the number of solutions to $b_1 k_1 + \cdots + b_{\ell} k_{\ell} = n$ in $A$. It is a classical result in additive combinatorics 
 that there are 
 \begin{equation}
   \bigg(1+ O\bigg(\frac{1}{n}\bigg)\bigg) \frac{1}{(h-1)!} \frac{n^{h-1}}{b_1\cdots b_h} \label{tuppbd}
 \end{equation}
 nonnegative integer solutions to the equation $b_1k_1+\cdots + b_{h}k_h = n$.
 
 In 1990, Erd\H{o}s--Tetali \cite{erdtet90} showed that there exists $A\subseteq \Z_{\geq 0}$ for which $r_{A,h}(n) \asymp \log n$, provided $b_1 = \ldots = b_h = 1$. In 2000, Vu \cite{vvu00mc} showed this for general $b_1,\ldots,b_h$. In this paper, we will address the following (cf. Nathanson \cite[Problem 1]{nat05}):
 
 \begin{nthm}{Question}
  For which functions $1\ll F(x) \ll x^{h-1}$ can we find a subset $A\subseteq \Z_{\geq 0}$ satisfying $r_{A,h}(n) \sim F(n)$? Or $r_{A,h}(n) \asymp F(n)$?
 \end{nthm}
 
 \begin{xrem}[Prescribed representations in $\Z$]
  Suppose we are allowed to use negative integers as well as non-negative. Nathanson \cite{nat05} showed that if $b_1 = \ldots = b_h = 1$, then for every $F:\Z\to \Z_{\geq 0}$ with finitely many zeros there exists $A\subseteq \Z$ with $r'_{A,h}(n) = F(n)$, where $r'$ counts the number of \emph{distinct} solutions (i.e., ignoring permutations) to $b_1k_1 +\cdots + b_{h}k_h=n$. Fang \cite{fang24} showed that sets of $b_1,\ldots,b_h$ with distinct subset sums also enjoy this property.
 \end{xrem}
 
 In order to attack Nathanson's question with analytic methods, we will assume certain regularity conditions on $F$, and that $F(x) \gg \log x$. In fact, Erd\H{o}s \cite[p. 132]{erdo56} inquired whether, if $b_1=b_2=1$ and $r_{A,2}(n)>0$ for large $n$, it is always the case that
 \begin{equation}
  \limsup_{n\to \infty} \frac{r_{A,2}(n)}{\log n} >0. \label{erdconj}
 \end{equation}
 This conjecture\footnote{This is a strong version of the Erd\H{o}s--Tur\'an conjecture for additive bases: Does $A+A=\Z_{\geq 0}$ necessarily imply that $r_{A,2}(n)$ is unbounded? More generally, if $|A\cap[1,x]| \gg x^{1/2}$, then is $r_{A,2}(n)$ necessarily unbounded?} naturally extends to general $r_{A,h}$ as defined in \eqref{mainEq}, and a heuristic in its favor is the subject of Theorem \ref{MT3}.
 
 Our existence proofs are probabilistic: we consider a random set $\A\subseteq \Z_{\geq 0}$ obtained by including each integer $n$ independently with probability $p(n)$, chosen so that $\E(r_{\A,h}(n))$ matches the target growth $F(n)$; we then show the desired bounds hold almost surely, hence for some deterministic set $A$. (See Section \ref{probset} for the precise choice of $p$ and the basic expectation computations.)
 
\subsection{Asymptotics}
 In the asymptotic case, we will work with \emph{regularly varying} functions $F$; that is, $F$ is a positive measurable function, and for every $\lambda>0$, $\lim\limits_{x\to \infty} F(\lambda x)/F(x)$ exists. Every regularly varying function is of the form
 \[ F(x) = x^{\kappa} \phi(x) \]
 for some real $\kappa \in \R$ and some \emph{slowly varying function} $\phi(x)$, meaning that $\phi$ satisfies $\lim\limits_{x\to\infty} \phi(\lambda x)/\phi(x) = 1$ for every real $\lambda >0$.\footnote{e.g. $\log x$, $e^{c\sqrt{\log x}}$, but not $(\log x)^{\sin x}$.} See Bingham--Goldie--Teugels \cite{bingham89} for an excellent treatise on the subject.
 
 Our first theorem deals with $F$ such that $F(x)/\log x \to \infty$ as $x\to \infty$, where Kim--Vu's inequality \cite{kimvvu00} paired with an inequality by Vu \cite[Theorem 1.4]{vvu00mc} allows us to obtain asymptotics.

 \begin{thm}\label{MT1}
  Let $h\geq 2$ be a given integer. Let $F$ be a regularly varying function for which
  \[ \frac{F(x)}{\log x} \xrightarrow{x\to\infty} \infty\quad\text{and}\quad F(x) \leq (1+o(1)) \frac{1}{(h-1)!} \frac{x^{h-1}}{b_1\cdots b_{h}}. \]
  Then, there exists $A\subseteq \Z_{\geq 0}$ such that $r_{A,h}(n) \sim F(n)$.
 \end{thm}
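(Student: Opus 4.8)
The plan is to produce $A$ by the probabilistic method: realize it as a random subset of $\Z_{\ge0}$ built from independent Bernoulli trials, pin down its expected representation function with tools from the theory of regular variation, and then transfer the expectation to $r_{A,h}(n)$ itself via a polynomial concentration inequality. Write $F(x)=x^{\kappa}\phi(x)$ with $\phi$ slowly varying and $\kappa\in[0,h-1]$ its index (that range being forced by $\log x\ll F(x)\ll x^{h-1}$), and set $\rho:=(\kappa-h+1)/h$, so $\rho\in(-1,0]$. I would fix a constant $c>0$ (to be chosen at the end of the mean computation) and, for $n\ge n_0$, place $n$ in $A$ independently with probability
\[ p_n:=\min\Bigl\{1,\ c\,\bigl(F(n)/n^{h-1}\bigr)^{1/h}\Bigr\}. \]
Since $F(x)\ll x^{h-1}$ one has $p_n\to0$ unless $\kappa=h-1$; in that boundary case the hypothesis $F(x)\le(1+o(1))\tfrac{x^{h-1}}{(h-1)!\,b_1\cdots b_h}$ keeps $c\bigl(F(n)/n^{h-1}\bigr)^{1/h}\le1+o(1)$, so the truncation by $1$ perturbs the forthcoming mean only by a factor $1+o(1)$ and is harmless.

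\textit{The expectation.} Expanding \eqref{mainEq} gives $\mu_n:=\E[r_{A,h}(n)]=\sum\prod_i p_{k_i}$ over ordered solutions of $b_1k_1+\cdots+b_hk_h=n$, tuples with a repeated coordinate being weighted by the product over distinct values; a direct estimate, using that the partial sums of the regularly varying sequence $(p_n)$ satisfy $\sum_{k\le x}p_k\asymp x\,p_x$, shows these exceptional tuples contribute only $O(\mu_n/(np_n))=o(\mu_n)$. For the main sum I would substitute $k_i=t_in$ and recognize a Riemann sum over the simplex $\Delta=\{t\in\R_{\ge0}^h:\ b_1t_1+\cdots+b_ht_h=1\}$: the uniform convergence theorem for regularly varying functions (see \cite{bingham89}) gives $p_{t_in}\sim t_i^{\rho}p_n$ uniformly for $t_i$ in compact subsets of $(0,\infty)$, while the neighbourhoods of the faces $\{t_i=0\}$ contribute $\ll\eps^{1+\rho}\mu_n$, negligible as $\eps\to0$ precisely because $1+\rho>0$. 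This yields
\[ \mu_n\ \sim\ \gamma\,c^h\,F(n),\qquad \gamma:=\int_{\Delta}\prod_{i=1}^h t_i^{\rho}\,d\sigma(t)\in(0,\infty), \]
with $d\sigma$ the natural measure on $\Delta$ and, consistently with \eqref{tuppbd}, $\gamma=\tfrac{1}{(h-1)!\,b_1\cdots b_h}$ when $\kappa=h-1$. Choosing $c:=\gamma^{-1/h}$ — which is $\le1$ in the only binding case $\kappa=h-1$, by the hypothesis on $F$ — gives $\mu_n\sim F(n)$.

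\textit{Concentration.} For each large $n$, $r_{A,h}(n)$ is a polynomial of degree $\le h$ in the independent indicators $(\mathbbm{1}_{k\in A})_k$ with nonnegative coefficients, whose partial-derivative expectations satisfy $E_j\asymp\mu_n/(np_n)^j$ for $0\le j\le h-1$ and $E_h\asymp1$. Since $np_n=c\,(nF(n))^{1/h}\to\infty$, the Kim--Vu inequality \cite{kimvvu00} already yields $r_{A,h}(n)=(1+o(1))\mu_n$ once $\mu_n\gg(\log n)^{2h}$, and the sharper inequality \cite[Theorem~1.4]{vvu00mc} pushes the threshold down to $\mu_n\gg\log n$ — exactly the hypothesis $F(x)/\log x\to\infty$. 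Concretely, for each fixed $\eta>0$ one obtains $\Pr\bigl(|r_{A,h}(n)-\mu_n|>\eta\mu_n\bigr)\le n^{-1-\delta(\eta)}$ for all large $n$; Borel--Cantelli applied along $\eta=1/m$ $(m\in\N)$ then shows that, with probability $1$, $r_{A,h}(n)/\mu_n\to1$, i.e.\ $r_{A,h}(n)\sim\mu_n\sim F(n)$. In particular some realization of $A$ works, proving the theorem.

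The main obstacle is the concentration step at the bottom of the admissible range, $\log x\ll F(x)\ll(\log x)^{2h}$: there plain Kim--Vu is too weak and one must lean on Vu's sharper polynomial inequality, whose threshold is precisely what dictates the shape $F/\log x\to\infty$ of the hypothesis (and below which one expects no such $A$ to exist at all, cf.\ Theorem~\ref{MT3}). A subtler, technical hurdle is carrying out the Karamata-type evaluation of $\mu_n$ with enough uniformity in the slowly varying part $\phi$ of $F$ to land on $\mu_n\sim F(n)$ rather than merely $\mu_n\asymp F(n)$; this is exactly where the full regular-variation hypothesis is used, in contrast to the weaker growth conditions that suffice for the $\asymp$ statement.
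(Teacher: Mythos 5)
Your overall architecture --- the measure $\Pr(\A\ni n)=\min\{1,\,cf(n)/n\}$ with $f=(xF(x))^{1/h}$, the Karamata/simplex evaluation of the mean with the constant $\gamma$ absorbed into $c$, Kim--Vu in the bulk and Vu's sharper polynomial inequality near the $\log n$ threshold --- is exactly the paper's. But there is a genuine gap at the step you yourself flag as the main obstacle. You assert that the partial-derivative expectations of the representation polynomial satisfy $\E_j\asymp\mu_n/(np_n)^j$ for $1\le j\le h-1$. This is false for the unrestricted polynomial: $\E_j$ is a \emph{maximum} over all choices of $j$ variables, including choices $k_{i_1},\ldots,k_{i_j}$ whose weighted sum is close to $n$, which force the remaining $h-j$ coordinates into a bounded range where the inclusion probabilities are $\asymp 1$. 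Concretely $\E_{h-1}\asymp\max_{m}\Pr(\A\ni m)\asymp 1$, and in the critical case $\kappa=0$ one has $\E_j\asymp 1$ for \emph{every} $1\le j\le h-1$, since $\max_{m\le n}f(m)^{h-j}/m=\max_{m\le n}m^{-j/h+o(1)}$ is attained at bounded $m$. Vu's Theorem~1.4 requires $\E_1,\ldots,\E_{h-1}\le n^{-\alpha}$, so as you invoke it its hypotheses simply fail in the only regime where you need it.

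The missing idea is the decomposition into $\delta$-small and $\delta$-normal solutions: one applies Vu's inequality only to $\rho^{(\delta\textnormal{-normal})}_{\A,h}(n)$ (solutions with every coordinate $\ge n^{\delta}$), for which the bounds $\E_j\ll n^{-\delta j/h+o(1)}$ do hold, and one must then separately prove that the $\delta$-small contribution is almost surely $O(1)$. That second half is not a formality; it is the content of Lemmas \ref{expdsml}, \ref{prodHat} and \ref{disjlm} (expectation of small solutions, factorization through maximal disjoint families, and the Erd\H{o}s--Tetali disjointness bound). Relatedly, both concentration theorems require \emph{simple} polynomials, so the concentration must be run on the exact-solution count $\rho_{\A,h}$ with the non-exact (lower-length) equations controlled separately; you handle this for the mean but not for the concentration step. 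In the range $\kappa>0$ your argument survives, because Kim--Vu only needs $(E'/E)^{1/2}(\log n)^h=o(1)$ and there $E'\ll n^{\max\{0,(h-1)\kappa/h-1\}+o(1)}$ while $E\asymp n^{\kappa+o(1)}$; the gap is confined to, but fatal in, the bottom range $F(n)\ll(\log n)^{2h}$.
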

 
 Note that by \eqref{tuppbd}, the upper bound covers as wide a range as possible. The proof also implies that we can take $A$ satisfying $|A\cap [1,x]| \sim C\, (x F(x))^{1/h}$, where, if $F(x) = x^{\kappa} \phi(x)$,
 \[ C = \frac{h}{1+\kappa} \frac{\Gamma(1+\kappa)^{\frac{1}{h}}}{\Gamma(\tfrac{1+\kappa}{h})} (b_1\cdots b_{h})^{(1+\kappa)/h^2}. \]
 When $F(x) = x^{\kappa}$, we obtain power savings from Kim--Vu's inequality.
 
 \begin{cor}\label{MT12}
  Let $h\geq 2$ be a given integer, and $0<\kappa \leq h-1$ a real number. Then, for any $C\in \R_{>0}$, there exists $A\subseteq \Z_{\geq 0}$ such that $|A\cap [1,x]| = C x^{(1+\kappa)/h} + O(x^{(1+\kappa)/2h}\log x)$ and
  \[ r_{A,h}(n) = C^{h}\frac{(1+\kappa)^{h}}{h^{h}} \frac{\Gamma(\tfrac{1+\kappa}{h})^{h}}{\Gamma(1+\kappa)}\, \frac{n^{\kappa}}{(b_1\cdots b_{h})^{\frac{1+\kappa}{h}}} + O(E_{h,\kappa}), \]
  where $E_{2,\kappa} := n^{\frac{\kappa}{2}}(\log n)^2$, $E_{3,\kappa} := n^{\frac{\kappa}{2}+\max\{0,\frac{\kappa}{3}-\frac{1}{2}\}}(\log n)^3$, and for $h\geq 4$,
  \[ E_{h,\kappa} := \begin{cases}
   n^{\frac{\kappa}{2}}(\log n)^{h} &\text{ if } 0\leq \kappa \leq \frac{2}{h-2},\\
   n^{(1-\frac{1}{h})\kappa - \frac{1}{h}} &\text{ if } \frac{2}{h-2} < \kappa < h-2, \\
   n^{(1-\frac{1}{2h})\kappa-\frac{1}{2}}(\log n)^h &\text{ if } h-2\leq \kappa \leq h-1.
  \end{cases} \]
 \end{cor}
 
 \begin{xrem}
  An interesting subcase is the following: Writing $r_{A,h+1}(n) = r^{(1,\ldots,1)}_{A,h+1}(n)$ for the number of solutions $(k_1,\ldots,k_{h+1})\in A^{h+1}$ to $k_1+\cdots +k_{h+1} = n$, we obtain the existence of a set $A\subseteq \Z_{\geq 0}$ with $|A\cap[1,x]| = x^{1/h} + O(x^{1/2h}\log x)$ and 
  \[ r_{A,h+1}(n) = \Gamma(1+1/h)^h\, n^{1/h} + O\big(n^{1/2h}(\log n)^{h+1}\big). \]
 \end{xrem}
 
\subsection{Order of magnitude}
 Under significantly weaker regularity hypotheses, one can still show the existence of sets $A\subseteq \Z_{\geq 0}$ such that $r_{A,h}(n) \asymp F(n)$, though we no longer obtain asymptotics.
 
 \begin{thm}\label{MT21}
  Let $h\geq 2$ be a fixed integer, and let $F$ be a positive, increasing, locally integrable real function satisfying $F(2x)\ll F(x)$, and in the range
  \begin{equation*}
   \log x \ll F(x) \ll x^{h-1} 
  \end{equation*}
  Then, there exists $A\subseteq \Z_{\geq 0}$ such that $|A\cap [1,x]| \asymp (xF(x))^{1/h}$ and 
  $r_{A,h}(n) \asymp F(n)$.
 \end{thm}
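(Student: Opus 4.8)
The plan is to run the probabilistic method in the style of Erd\H{o}s--Tetali and Vu: build a random set $A$ with the events $\{m\in A\}$ independent and $\Pr[m\in A]=p_m$, tune $p_m$ so that $\E[r_{A,h}(n)]\asymp F(n)$ and $\E[\,|A\cap[1,x]|\,]\asymp (xF(x))^{1/h}$, and then use concentration to turn both into almost-sure statements, so that a suitable realization of $A$ does the job. Concretely, fix a large constant $N_0$ and a small constant $c>0$, put $p_m=0$ for $m\le N_0$, and set
\[ p_m \;=\; c\,\Big(\frac{F(m)}{m^{h-1}}\Big)^{1/h}\qquad (m>N_0), \]
which lies in $[0,1]$ once $N_0$ is large because $F(x)\ll x^{h-1}$. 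The heuristic behind this choice is that a set with counting function $\asymp (xF(x))^{1/h}$ should have ``density'' $\asymp \tfrac{d}{dx}(xF(x))^{1/h}\asymp (F(x)/x^{h-1})^{1/h}=p_x$.

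The one regularity fact I would isolate first is this: since $F$ is increasing with $F(2x)\ll F(x)$, we get $F(y)\asymp F(x)$ whenever $y\asymp x$, so $xF(x)$ is comparable to an increasing function that roughly doubles when $x$ does. Because $h\ge 2$ makes the exponent $(h-1)/h<1$, a dyadic summation (upper bound from monotonicity of $F$, lower bound from the block $m\in(x/2,x]$) gives
\[ \sum_{m\le x} p_m \;\asymp\; F(x)^{1/h}\sum_{m\le x} m^{-(h-1)/h}\;\asymp\; (xF(x))^{1/h}. \]
Thus $\E[\,|A\cap[1,x]|\,]\asymp (xF(x))^{1/h}$, and since this mean grows at least like $x^{1/h}$, Chernoff's inequality at the points $x=2^k$, combined with monotonicity of $|A\cap[1,x]|$ and the near-doubling of $(xF(x))^{1/h}$, yields $|A\cap[1,x]|\asymp (xF(x))^{1/h}$ for all large $x$, almost surely.

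Next I would evaluate $\E[r_{A,h}(n)]=\sum_{b_1k_1+\cdots+b_hk_h=n}p_{k_1}\cdots p_{k_h}$. For the upper bound, split each $k_i$ into dyadic ranges; on a product of ranges the equation $\sum b_ik_i=n$ forces the largest of the $2^{j_i}$ to be $\asymp n$, the number of lattice points on the corresponding hyperplane slice of the box is $\lesssim$ (product of side lengths)/(largest side length), and $F(k_i)\le F(O(n))\asymp F(n)$; summing the resulting geometric series over the remaining $j_i$ gives $\E[r_{A,h}(n)]\lesssim F(n)$, with no stray logarithmic factors since only $O(1)$ dyadic scales are available for the maximal variable. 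For the matching lower bound, keep only the central block $k_i\asymp n/(hb_i)$: there $F(k_i)\asymp F(n)$, and the classical count \eqref{tuppbd}, localized to this sub-simplex and using $\gcd(b_1,\dots,b_h)=1$ to exclude congruence obstructions, supplies $\gg n^{h-1}$ solutions, each contributing $\asymp F(n)/n^{h-1}$. Hence $\E[r_{A,h}(n)]\asymp F(n)$.

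Finally I would upgrade this to an almost-sure bound via the Kim--Vu polynomial concentration inequality applied to $Y_n:=r_{A,h}(n)$, a degree-$h$ polynomial with non-negative coefficients in the independent indicators $\mathbbm{1}_{m\in A}$. Computing the truncated expected partial derivatives $\mathcal E_j(n)=\max_{|S|=j}\E[\partial_S Y_n]$: one has $\mathcal E_0\asymp F(n)$, while fixing $j\ge 1$ of the variables removes $j$ free summations and the dominant contribution is again the central configuration, giving $\mathcal E_1\asymp F(n)^{1-1/h}n^{-1/h}$ and $\mathcal E_j\le \mathcal E_1$ for $j\ge 1$ (in particular $\mathcal E_0\gg\mathcal E_1$, as needed). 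Kim--Vu then bounds $|Y_n-\E Y_n|$ by $O_h\big((\mathcal E_0\mathcal E_1)^{1/2}\lambda^h\big)$ off an event of probability $O_h(e^{-\lambda}n^{h-1})$; choosing $\lambda=C\log n$ with $C>h$ makes the bad events summable, while the deviation satisfies
\[ (\mathcal E_0\mathcal E_1)^{1/2}(\log n)^h \;\asymp\; F(n)^{1-\frac{1}{2h}}\,n^{-\frac{1}{2h}}(\log n)^{h}\;=\;o\big(F(n)\big), \]
since $n F(n)$ dwarfs $(\log n)^{2h^2}$ (already $(\log n)^{2h^2}=o(n)$ suffices, and $F(x)\gg\log x$ gives extra room). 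By Borel--Cantelli, almost surely $r_{A,h}(n)=(1+o(1))\E[r_{A,h}(n)]\asymp F(n)$ for all large $n$; intersecting with the counting-function event produces a set $A$ with both properties, after adjusting $A$ on a finite set if one wants the estimates for all $n$. The main obstacle is uniformity over the whole range $\log x\ll F(x)\ll x^{h-1}$: a single argument must degenerate gracefully to ``positive density'' when $F(x)\asymp x^{h-1}$ and to the delicate Erd\H{o}s--Tetali regime when $F(x)\asymp\log x$, which is precisely what the doubling condition $F(2x)\ll F(x)$ secures, while the localized lattice-point count is what keeps the upper estimate for $\E[r_{A,h}(n)]$ free of logarithms and what makes the $\mathcal E_j$ computation go through.
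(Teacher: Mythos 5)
Your overall architecture (independent indicators with $p_m\asymp(F(m)/m^{h-1})^{1/h}$, expectation computation via dyadic boxes and Lemma \ref{condB}, then concentration plus Borel--Cantelli) matches the paper's, and your expectation estimates and the counting-function part are fine. But the concentration step has a genuine gap at the bottom of the range. In Kim--Vu (Theorem \ref{kimvu}) the deviation is $\lambda^d(E'E)^{1/2}$ with $E'=\max_{j\ge1}\E_j(Y)$, and for $j=h$ the derivative $\partial_S Y$ with $|S|=h$ is just the coefficient of the corresponding monomial, so $\E_h(Y)\ge 1$ whenever $n$ has a representation. Your claim that $\E_j\le\E_1\asymp F(n)^{1-1/h}n^{-1/h}$ therefore fails for $j=h$ (and typically for all large $j$), and the true deviation is at least $\lambda^h\E(Y)^{1/2}\gg(\log n)^hF(n)^{1/2}$, which is \emph{not} $o(F(n))$ unless $F(n)\gg(\log n)^{2h}$. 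This is exactly why the paper partitions $\N$ into $\N^{(1)}=\{F(n)\ge(\log n)^{2h^2}\}$, where Kim--Vu does the job (Lemma \ref{rN1}), and $\N^{(2)}$, where it is replaced by Vu's Theorem \ref{vuORI} applied to the $\delta$-normal part (Lemma \ref{rdltN2}) together with a separate induction on $\delta$-small solutions via the maxdisfam/disjointness machinery (Lemmas \ref{prelem}, \ref{rdltt}). Your proposal has no substitute for either of these two ingredients, and the claimed asymptotic $r_{A,h}(n)=(1+o(1))\E r_{A,h}(n)$ a.s.\ cannot hold uniformly down to $F\asymp\log$: Theorem \ref{MT3} shows that when the expectation is a small multiple of $\log n$ the representation function vanishes infinitely often almost surely.

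A second, related error is your choice of ``a small constant $c>0$.'' In the regime $F(n)\asymp\log n$ the lower-bound concentration (whether via Chernoff for $h=2$ or Theorem \ref{vuORI} for $h\ge3$) requires $\E(r_{\A,h}(n))\ge C_h\log n$ for a \emph{large} constant, which forces $c$ large; with $c$ small your construction lands precisely in the setting of Theorem \ref{MT3} and the conclusion $r_{A,h}(n)\asymp F(n)$ is false for it. The paper's statement ``for $c>1$ sufficiently large'' in Lemma \ref{rdltN2} is not cosmetic.
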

 
 This range includes all regularly varying functions $x^{\kappa}\phi(x)$ with $\kappa>0$, since $x^{\kappa}\phi(x) \asymp F(x)$ for some increasing $F$. Moreover, it includes some functions of the form $x^{k(x)}$ where $k(x)$ varies, such as $F(x) = x^{2+\sin(\log\log x)}$. The assumption $F(2x)\ll F(x)$ prevents rapid fluctuations but allows for mild oscillation.
 
 Lastly, we prove a more technical, direct generalization of the main theorem in \cite{taf19}, with the weakest regularity assumptions our methods allow.
  
 \begin{thm}\label{MT2}
  Let $h\geq 2$ be a given integer, and let $\psi(x) \gg \log x$ be an increasing slowly varying function. If
  \begin{enumerate}[label=\textnormal{(\roman*)}]
   \item \textnormal{(Range)} $(x\, \psi(x))^{1/h} \ll f(x)\ll \min\{ (x\, \psi(x))^{1/(h-1)},\ x\}$,\smallskip
   
   \item \label{item2thm}\textnormal{(Regularity)} $\int_{1}^{x} \frac{f(t)}{t}\,\mathrm{d}t \asymp f(x)$;
  \end{enumerate}
  then there exists $A\subseteq \Z_{\geq 0}$ such that $|A\cap[1,x]|\asymp f(x)$ and $r_{\A,h}(n) \asymp \dfrac{f(n)^h}{n}$.
 \end{thm}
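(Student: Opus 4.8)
The plan is to build $A$ by the probabilistic method of Erd\H{o}s--Tetali, in the form used in \cite{taf19}, of which this is a direct refinement. Dispose first of the degenerate case $f(x)\asymp x$, which can occur only when $h=2$: there one checks that $A=\Z_{\geq 0}$ works, since $r_{\Z_{\geq 0},2}(n)\asymp n\asymp f(n)^2/n$ by \eqref{tuppbd}. So we may assume $f(x)=o(x)$. Let $\A\subseteq\Z_{\geq 1}$ be the random set in which each $n$ is included independently with probability
\[
  p_n:=\min\Big\{1,\ c\,\frac{f(n)}{n}\Big\},
\]
where $c>0$ is a constant we are free to choose as large as we please, since changing it only rescales the implied constants in the conclusion. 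Since $f(n)\gg(n\,\psi(n))^{1/h}\gg n^{1/h}$, we have $p_n=c\,f(n)/n$ for all large $n$; write $r_{\A}(n)=r_{\A,h}(n)$ as in \eqref{mainEq}. The goal is to show that, almost surely, $|\A\cap[1,x]|\asymp f(x)$ for all large $x$ and $r_{\A}(n)\asymp f(n)^h/n$ for all large $n$; any realization in the resulting full-probability event gives the desired $A$.

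Before that, I record the two consequences of the regularity hypothesis (ii) that the argument uses. Put $I(x)=\int_1^x f(t)/t\,\mathrm dt$; hypothesis (ii) reads $f\asymp I$, and since $I$ is increasing with $(\log I)'(x)=\big(f(x)/x\big)/I(x)\asymp 1/x$, integrating over $[x,\lambda x]$ shows $\log\big(I(\lambda x)/I(x)\big)$ is bounded for each fixed $\lambda>0$. Hence (a) $f(\lambda x)\asymp f(x)$ for every fixed $\lambda>0$, and therefore (b) $\sum_{k\leq m}f(k)/k\asymp\int_1^m f(t)/t\,\mathrm dt\asymp f(m)$ (the sum and integral agree because $f(t+1)\asymp f(t)$). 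Property (b) handles the counting function at once: $|\A\cap[1,x]|$ is a sum of independent Bernoulli variables with mean $\mu_x:=\E|\A\cap[1,x]|=\sum_{n\leq x}p_n\asymp f(x)$, so Chernoff's bound gives $\Pr\bigl[\,\bigl|\,|\A\cap[1,x]|-\mu_x\,\bigr|>\tfrac12\mu_x\,\bigr]\leq e^{-\Omega(f(x))}$; evaluating along $x=2^j$ (summable, as $f(x)\gg x^{1/h}$) and interpolating by monotonicity gives $|\A\cap[1,x]|\asymp f(x)$ for all large $x$ almost surely.

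Next, the first moment of $r_{\A}(n)$. Expanding and discarding the lower-order contribution of tuples with a repeated entry,
\[
  \E[r_{\A}(n)]\ \asymp\ \sum_{\substack{b_1k_1+\cdots+b_hk_h=n\\ k_1,\dots,k_h\geq 1}}\ \prod_{i=1}^{h}\frac{f(k_i)}{k_i},
\]
so everything hinges on the deterministic estimate that this sum is $\asymp f(n)^h/n$. For the lower bound, restrict to $k_i\asymp n$: there $f(k_i)/k_i\asymp f(n)/n$ by (a), and since $\gcd(b_1,\dots,b_h)=1$ there are $\asymp n^{h-1}$ solutions of $\sum b_ik_i=n$ in this box, so the sum is $\gg(f(n)/n)^h n^{h-1}=f(n)^h/n$. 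For the upper bound, induct on $h$: since the $b_ik_i$ sum to $n$, at least one of them is $\leq n/h$; summing over which one, fixing that variable, bounding the remaining $(h-1)$-fold sum inductively by $\ll(f(n)/n)^{h-1}n^{h-2}$ (its modulus is $\asymp n$, so (a) applies), and then summing $\sum_{k\leq n/(hb_i)}f(k)/k\ll f(n)$ by (b), one gets $\ll(f(n)/n)^{h-1}n^{h-2}\cdot f(n)=f(n)^h/n$. Hence $\E[r_{\A}(n)]\asymp f(n)^h/n$, which --- and this is where the lower range bound enters --- is $\gg\psi(n)\gg\log n$.

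Finally, concentration, which I would carry out exactly as in \cite{taf19}: Kim--Vu's polynomial inequality \cite{kimvvu00} for the upper tail of $r_{\A}(n)$ (a degree-$h$ polynomial with bounded coefficients in the independent Bernoulli variables $(\mathbbm{1}[m\in\A])_m$) and Janson's inequality for the lower tail. Its expected truncated derivatives of order $j$ are $\ll\max\{f(n)^{h-j}/n,\,1\}$ by the sum estimate above with $j$ fewer free variables, and the quantity $\Delta$ entering Janson's bound --- a weighted count of pairs of representations of $n$ sharing a coordinate --- is $o(\E[r_{\A}(n)])$ by an analogous combinatorial computation; since $\E[r_{\A}(n)]\gg\log n$ and $c$ is at our disposal, both deviation probabilities $\Pr[\,|r_{\A}(n)-\E r_{\A}(n)|>\tfrac12\E r_{\A}(n)\,]$ can be made $\ll n^{-2}$, and Borel--Cantelli gives $r_{\A}(n)\asymp f(n)^h/n$ for all large $n$ almost surely. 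I expect the one real obstacle to be the deterministic estimate $\sum_{\sum b_ik_i=n}\prod_i f(k_i)/k_i\asymp f(n)^h/n$ together with the matching bounds on its truncated derivatives: securing the upper bound \emph{with implied constants uniform in $n$} is precisely the role of hypothesis (ii), and the induction must be organized so that every off-diagonal range gets absorbed into $f(n)^h/n$ rather than into $\log n$ times it. The squeeze at the very bottom of the range, where $\E[r_{\A}(n)]$ is only of order $\log n$ --- the Erd\H{o}s--Tetali threshold, which is exactly why the floor is $\psi(x)\gg\log x$ --- is what the concentration machinery of \cite{taf19} is built to survive.
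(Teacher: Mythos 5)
Your setup---the measure $\Pr(\A\ni n)=\min\{1,c f(n)/n\}$, the first-moment estimate $\E(r_{\A,h}(n))\asymp f(n)^h/n$, and Borel--Cantelli---matches the paper's, and your inductive proof of the deterministic bound $\sum_{\sum b_ik_i=n}\prod_i f(k_i)/k_i\ll f(n)^h/n$ (peel off the variable with $b_ik_i\le n/h$, then use $\sum_{k\le m}f(k)/k\ll f(m)$, which is hypothesis (ii)) is a legitimate substitute for the paper's dyadic decomposition in Lemmas \ref{condB} and \ref{fundLem}. Janson's inequality for the lower tail is also fine; it is essentially the correlation-inequality route the paper takes in Section 6.

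The gap is the upper tail at the bottom of the range, the very regime you single out as critical. Kim--Vu (Theorem \ref{kimvu}) bounds the deviation by $8^h\sqrt{h!}\,\lambda^h(E'/E)^{1/2}E$ with failure probability $\ll n^{h-1}e^{-\lambda}$, so Borel--Cantelli forces $\lambda\gg\log n$; but $E'=\max_j\E_j\ge\E_h\ge 1$ always, since the $h$-th derivative of a degree-$h$ monomial is its (positive integer) coefficient. Hence $(E'/E)^{1/2}\ge E^{-1/2}$ and the deviation term is $\gg(\log n)^hE^{1/2}$, which exceeds $E$ whenever $\E(r_{\A,h}(n))\ll(\log n)^{2h}$ --- in particular when $f(n)\asymp(n\log n)^{1/h}$ and $\E(r_{\A,h}(n))\asymp\log n$. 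There Kim--Vu yields only $r_{\A,h}(n)\ll(\log n)^{h+1/2}$, not $\ll f(n)^h/n$. The paper closes this regime by splitting solutions into $\delta$-small and $\delta$-normal: for the $\delta$-normal part every variable is $\ge n^{\delta}$, so the derivatives of orders $1,\dots,h-1$ are all $\le n^{-\alpha}$ and Vu's Theorem \ref{vuORI} (which is blind to $\E_h$) gives two-sided concentration with deviation $(\lambda\E)^{1/2}$; the $\delta$-small part, and the shorter equations arising from non-exact solutions via \eqref{excnon}, are then controlled by the Erd\H{o}s--Tetali disjointness lemma \ref{disjlm} combined with the product bound of Lemma \ref{prodHat}. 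Without some such device --- disjoint families, or an upper-tail inequality genuinely insensitive to the top derivative --- your argument does not close when $\E(r_{\A,h}(n))\asymp\log n$. Two smaller points: $f(x)\asymp x$ is admissible for every $h$, not only $h=2$ (the same fix $A=\Z_{\geq0}$ works); and $r_{\A,h}-\rho_{\A,h}$ needs its own almost-sure upper bound, which again comes from the disjointness lemma rather than from the expectation alone.
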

 
 Writing $f(x) := (x F(x))^{1/h}$, Theorem \ref{MT2} implies that for every positive, locally integrable $F$ in the range $\psi(x) \ll F(x) \ll x^{\frac{1}{h-1}} \psi(x)^{1+\frac{1}{h-1}}$ that satisfies
 \[ \frac{1}{x}\int_{1}^{x} F(t)\,\mathrm{d}t \asymp F(x) \]
 (this is a consequence of Lemma \ref{orpiCHAR}), there exists $A\subseteq \Z_{\geq 0}$ such that $r_{A,h}(n) \asymp F(n)$.
 
 \begin{prob}
  Can one extend the range of Theorem \ref{MT2} to $(x\log x)^{1/h} \ll f(x) \ll x$?
 \end{prob}

\subsection{What about \texorpdfstring{$F(x) \ll \log(x)$}{F(x) << log(x)}?}
 Again taking $f(x) := (xF(x))^{1/h}$, we will use the probabilistic space of random subsets $0\in\A\subseteq \Z_{\geq 0}$ generated by the measure $\Pr(n\in \A)= \min\{c\frac{f(n)}{n}, 1\}$ ($n\geq 1$), where $c>0$ is some real number, which is constructed in order to have, essentially,
 \[ |\A\cap [1,x]| \asymp f(x) \text{ with probability } 1,\footnotemark\quad\text{and } \E(r_{\A,h}(n)) \asymp c^{h}F(n).\]
 Theorems\footnotetext{Precisely, $\Pr(\A \subseteq \Z_{\geq 0} ~|~ \exists C_1,C_2 \in \R_{>0} : \forall x\geq 1,\ C_{1} f(x) \leq |\A\cap [1,x]| \leq C_2 f(x)) = 1$.} \ref{MT1}--\ref{MT2} are proven by showing that $r_{\A,h}$ concentrates around its mean; e.g., $r_{\A,h}(n) \asymp \E(r_{\A,h}(n))$ almost surely in Theorems \ref{MT21}, \ref{MT2}.\footnote{Since finite intersections of events of probability $1$ have probability $1$ (and hence are non-empty), there must exist $A\subseteq \Z_{\geq 0}$ satisfying both $|A\cap [1,x]| \asymp f(x)$ and $r_{A,h}(n)\asymp F(n)$.} The next theorem shows that if $cf(n)$ is too small (i.e., the space is constructed so that $\E(r_{\A,h}(n))$ is small), then not only will $r_{\A,h}$ not concentrate, but also have infinitely many zeros almost surely.

 \begin{thm}\label{MT3}
  Fix $0<\eps<1$. Define the random set $0\in \A\subseteq \Z_{\geq 0}$ by
  \[ \Pr(n\in\A) = \min\bigg\{c\frac{(n\log(n))^{1/h}}{n}, 1\bigg\} \qquad (n\geq 1), \]
  for $c = (1-\eps)^{1/h} (b_1\cdots b_h)^{1/h^2}/\Gamma(\frac{1}{h})$. Then $\E(r_{\A,h}(n)) \sim (1-\eps)\log n$, but
  \begin{equation}
   \Pr(r_{\A,h}(n) = 0 \textnormal{ infinitely often}) = 1. \label{pr0}
  \end{equation}
 \end{thm}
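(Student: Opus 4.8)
The plan is to apply a Borel--Cantelli-type argument, but the second moment / independence structure here is delicate, so the heart of the matter is producing, for infinitely many $n$, an event of the shape ``$r_{\A,h}(n)=0$'' whose probability is bounded below by something non-summable, together with enough independence to conclude. First I would reduce to counting a cleaner object: instead of all ordered $h$-tuples, restrict attention to solutions $b_1k_1+\cdots+b_hk_h=n$ in which each $k_i$ lies in a prescribed dyadic-type range, so that the summands are forced to be of comparable size $\asymp n$. Since $\Pr(\A\ni m)\asymp c\,(\log m/m^{h-1})^{1/h}$ for $m\asymp n$, and the number of such constrained tuples is $\asymp n^{h-1}$, the expected number of these ``balanced'' representations is still $\sim (1-\eps')\log n$ for a suitable $\eps'$ slightly larger than $\eps$; the point of the restriction is that distinct balanced representations of the same $n$ overlap in their coordinate sets in a controlled way, which is what one needs to turn first-moment smallness into a genuine lower bound on $\Pr(r=0)$.

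Next I would estimate $\Pr(r_{\A,h}(n)=0)$ from below. Writing $r_{\A,h}(n)$ as a sum of indicators $\mathbbm{1}[k_1,\dots,k_h\in\A]$ over (balanced) solution tuples, I expect $\Pr(r_{\A,h}(n)=0)\geq \exp(-(1+o(1))\E(r_{\A,h}(n)))$ up to a correction coming from the pairwise correlations $\Delta=\sum_{\text{tuples }T\neq T'}\Pr(T\cup T'\subseteq\A)$. For $h=2$ this is essentially the Janson-type inequality; for general $h$ one uses the FKG/Harris inequality for the lower bound $\Pr(r=0)\geq\prod(1-p_T)\geq e^{-(1+o(1))\E r}$ directly, since the events $\{T\subseteq\A\}$ are increasing. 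With $\E(r_{\A,h}(n))\sim(1-\eps)\log n$ (using the asymptotic for $r_{\N}$ from \eqref{tuppbd} together with the choice of $c$, exactly as in the statement), this yields
\[
  \Pr(r_{\A,h}(n)=0)\;\geq\; n^{-(1-\eps)+o(1)},
\]
which is \emph{not} summable in $n$. So $\sum_n \Pr(r_{\A,h}(n)=0)=\infty$, and it remains to upgrade this to ``infinitely often a.s.''

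The final step is a second Borel--Cantelli argument, for which I need approximate independence of the events $E_n=\{r_{\A,h}(n)=0\}$ along a sparse subsequence. The obstacle — and I expect this to be the main technical fight — is that $E_n$ and $E_m$ both depend on $\A\cap[0,\max(n,m)]$, so they are far from independent for nearby $n,m$; the usual fix is to choose a rapidly increasing sequence $n_1<n_2<\cdots$ (say $n_{j+1}\geq n_j^{K}$ for large $K$) and replace $E_{n_j}$ by the event $E_{n_j}'$ that there is no balanced representation of $n_j$ using \emph{only} elements of $\A$ in the window $(n_{j-1},n_j]$ — these windows are disjoint, so $\{E_{n_j}'\}_j$ are genuinely independent, and $\Pr(E_{n_j}')$ still differs from $\Pr(E_{n_j})$ only by the (summable, hence ultimately negligible) contribution of tuples touching $[0,n_{j-1}]$, because balanced tuples for $n_j$ have all coordinates $\asymp n_j\gg n_{j-1}$. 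One checks $\sum_j \Pr(E_{n_j}')=\infty$ survives the sparsification precisely because $\Pr(E_{n_j}')\geq n_j^{-(1-\eps)+o(1)}$ and the exponent $1-\eps<1$ leaves room: with $n_{j+1}=n_j^{K}$ one has $\sum_j n_j^{-(1-\eps)}\asymp\sum_j e^{-(1-\eps)K^{j}\log n_1}$, which converges — so in fact I would instead take a \emph{slowly} growing sequence, e.g. $n_j=2^{j}$ or $n_j=\lfloor j^{A}\rfloor$, verify that the windows are still "mostly disjoint" enough that the dependence between consecutive $E_{n_j}'$ is summable (Kochen--Stone / Erd\H{o}s--R\'enyi form of the second Borel--Cantelli suffices, needing only $\liminf \big(\sum_{j\leq J}\Pr E_{n_j}'\big)^2 / \sum_{i,j\leq J}\Pr(E_{n_i}'\cap E_{n_j}') >0$), and conclude $\Pr(E_{n_j}'\text{ i.o.})=1$, whence \eqref{pr0}. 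Balancing the growth rate of $(n_j)$ against the window-overlap error is the crux; everything else is the first-moment computation already packaged in the hypotheses.
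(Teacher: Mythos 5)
Your first two steps are sound and essentially match the paper: the lower bound $\Pr(r_{\A,h}(n)=0)\geq \prod_{R}(1-\Pr(R))\geq e^{-(1+o(1))\E(r_{\A,h}(n))}\gg n^{-1}$ via Harris/FKG for the decreasing events $\overline{\{R\subseteq\A\}}$ (each $R$ has a coordinate $\gg n$, so $\Pr(R)\to 0$ and the product-to-exponential step is legitimate), hence $\sum_n\Pr(r_{\A,h}(n)=0)=\infty$; and Kochen--Stone is indeed the right upgrade of the second Borel--Cantelli lemma. The ``balanced'' restriction is unnecessary but harmless, provided you remember that excluding only balanced representations gives an event \emph{containing} $\{r_{\A,h}(n)=0\}$, so you must separately kill the unbalanced tuples (their expected count is $o(\log n)$, so another FKG factor $n^{-o(1)}$ suffices).

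The genuine gap is in the last step. Your windowing/sparsification device cannot work, and you half-notice this yourself: a sequence $(n_j)$ sparse enough that balanced representations of $n_j$ live entirely in a window disjoint from earlier windows forces $\sum_j n_j^{-(1-\eps)}<\infty$ (already for $n_j=2^j$), while for slowly growing $n_j$ the window $(n_{j-1},n_j]$ has length $o(n_j)$ and simply does not contain the balanced representations of $n_j$, so $E'_{n_j}$ is no longer comparable to $E_{n_j}$. The correct mechanism, which your proposal never commits to, is to apply Kochen--Stone to \emph{all} $n$ and to bound $\Pr(E_n\wedge E_m)$ from \emph{above}. Note FKG goes the wrong way here ($E_n$, $E_m$ are both decreasing, so FKG gives $\Pr(E_n\wedge E_m)\geq\Pr(E_n)\Pr(E_m)$); one needs the two-sided Janson/Boppana--Spencer inequality applied to the combined family $\mathcal{S}[n]\cup\mathcal{S}[m]$, yielding $\Pr(E_n\wedge E_m)\leq\big(\prod_{R\in\mathcal{S}[n]}\Pr(\overline{R})\big)\big(\prod_{S\in\mathcal{S}[m]}\Pr(\overline{S})\big)e^{\Delta(n,m)}$, together with the computation that the pair-correlation term $\Delta(n,m)=\sum_{R\cap S\neq\varnothing}\Pr(R\wedge S)$ (summed over intersecting pairs within and across $\mathcal{S}[n]$, $\mathcal{S}[m]$) tends to $0$ as $\min(n,m)\to\infty$ --- in the paper this is $\Delta(n,m)\ll n^{-1/h+o(1)}+(n(m-n))^{-1/h+o(1)}+m^{-1/h+o(1)}$. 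That estimate, obtained by conditioning on the intersection $I=R\cap S$ and invoking the expectation bounds for $r^{*}_{\A,\ell}$, is the actual crux of the theorem and is absent from your proposal.
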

 
 This suggests a stronger version of \eqref{erdconj}:
 
 \begin{conj}\label{MC1}
  If $r_{A,h}(n) > 0$ for all large $n$, then
  \[ \limsup_{n\to \infty} \frac{r_{A,h}(n)}{\log n} \geq 1. \]
 \end{conj}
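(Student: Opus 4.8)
The plan is to express $\{r_{\A,h}(n)=0\}$ as an intersection of complements of ``bad events'' and then run a Kochen--Stone argument. Put $p_v:=\Pr(\A\ni v)$, and for a tuple $\mathbf k=(k_1,\dots,k_h)\in\Z_{\ge0}^h$ with $b_1k_1+\dots+b_hk_h=n$ let $B_{\mathbf k}$ denote the increasing event $\{k_1,\dots,k_h\in\A\}$, so that $r_{\A,h}(n)=0$ holds exactly on the event $E_n:=\bigcap_{\mathbf k}\overline{B_{\mathbf k}}$. The first assertion, $\E(r_{\A,h}(n))\sim(1-\eps)\log n$, is essentially the $\kappa=0$ case of the mean computation behind Theorem~\ref{MT1}: one has $\E(r_{\A,h}(n))=\sum_{\mathbf k}\Pr(B_{\mathbf k})$, the tuples with a repeated or zero coordinate contribute only $O\!\left(n^{-1/h}(\log n)^{(h-1)/h}\right)=o(1)$, and — after factoring out $\log n$ and rescaling by $n$ — the remaining sum is a Riemann sum for a Dirichlet integral over the slice $\{\sum_i b_it_i=1\}$ of the positive orthant; the constant $c$ is tuned precisely so that $\E(r_{\A,h}(n))=(1-\eps)\log n+O(1)=:\mu_n$.

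For the divergence of $\sum_n\Pr(E_n)$, observe that every solution of $b_1k_1+\dots+b_hk_h=n$ has some coordinate of size at least $n/(h\max_i b_i)$, whence $q^\ast_n:=\max_{\mathbf k}\Pr(B_{\mathbf k})\ll n^{1/h-1}(\log n)^{1/h}=o(1)$. Since the events $\overline{B_{\mathbf k}}$ are decreasing, Harris's inequality gives
\[
\Pr(E_n)\ \ge\ \prod_{\mathbf k}\bigl(1-\Pr(B_{\mathbf k})\bigr)\ \ge\ \exp\!\left(-\frac{\mu_n}{1-q^\ast_n}\right)\ =\ \exp\!\bigl(-\mu_n-O(\mu_n q^\ast_n)\bigr)\ \gg\ n^{-(1-\eps)},
\]
because $\mu_n q^\ast_n=o(1)$; hence $\sum_n\Pr(E_n)=\infty$. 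For the pairwise correlations, apply Janson's inequality to the union of the families $\{B_{\mathbf k}\}$ indexed by the solutions for $m$ and for $n$, obtaining
\[
\Pr(E_m\cap E_n)\ \le\ \exp\!\left(-\mu_m-\mu_n+\tfrac12\bigl(\Delta_m+\Delta_n+2\Delta_{m,n}\bigr)\right),
\]
where $\Delta_{m,n}:=\sum\Pr(B_{\mathbf k}\cap B_{\mathbf k'})$ runs over ordered pairs consisting of a solution $\mathbf k$ for $m$ and a solution $\mathbf k'$ for $n$ that share a positive coordinate value, and $\Delta_m:=\Delta_{m,m}$. Isolating the shared value $v$ and the $h-1$ remaining coordinates of each tuple,
\[
\Delta_{m,n}\ \ll\ \sum_{i,j=1}^{h}\sum_{v}p_v\,\rho^{(i)}(m-b_iv)\,\rho^{(j)}(n-b_jv),\qquad \rho^{(i)}(M):=\sum_{\sum_{\ell\ne i}b_\ell k_\ell=M}\ \prod_{\ell\ne i}p_{k_\ell},
\]
and a Riemann-sum estimate gives $\rho^{(i)}(M)\ll M^{-1/h}(\log M)^{(h-1)/h}$ for $M$ large (and $\rho^{(i)}(M)\ll1$ always). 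By Cauchy--Schwarz each summand splits into a factor depending only on $m$ and one depending only on $n$, and $\sum_v p_v\,\rho^{(i)}(M-b_iv)^2=o(1)$ as $M\to\infty$; so $\Delta_m,\Delta_n,\Delta_{m,n}\to0$ as $\min(m,n)\to\infty$. Together with the lower bound for $\Pr(E_n)$ this yields $\Pr(E_m\cap E_n)\le(1+o(1))\Pr(E_m)\Pr(E_n)$ uniformly, hence $\sum_{m,n\le N}\Pr(E_m\cap E_n)=(1+o(1))\bigl(\sum_{n\le N}\Pr(E_n)\bigr)^2$ (isolating the diagonal and a slowly growing initial block), and the Kochen--Stone lemma gives
\[
\Pr\bigl(r_{\A,h}(n)=0\ \text{infinitely often}\bigr)\ \ge\ \limsup_{N\to\infty}\frac{\bigl(\sum_{n\le N}\Pr(E_n)\bigr)^2}{\sum_{m,n\le N}\Pr(E_m\cap E_n)}\ =\ 1.
\]

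The main difficulty is the uniform bound $\Delta_{m,n}\to0$: the dangerous range is where the shared coordinate $v$ lies near the top of its interval, so that $m-b_iv$ (or $n-b_jv$) is small and $\rho^{(i)}$ there is only $O(1)$, and one must verify that this range is short enough to be harmless. A lesser point is to control the error terms in $\mu_n$ and in $\Pr(E_n)$ finely enough that the Kochen--Stone ratio really tends to $1$; the slack from $\eps>0$ — which makes the divergence $\sum_n\Pr(E_n)=\infty$ robust to $n^{o(1)}$ losses — keeps this routine.
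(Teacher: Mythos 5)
The statement you are proving is Conjecture \ref{MC1}, a deterministic assertion about \emph{every} set $A\subseteq\Z_{\geq 0}$: if $r_{A,h}(n)>0$ for all large $n$, then $\limsup_n r_{A,h}(n)/\log n\geq 1$ (equivalently, any $A$ with $\limsup<1$ must have $r_{A,h}(n)=0$ infinitely often). Your argument never touches such an arbitrary $A$. What it establishes is an almost-sure statement about one specific random model $\A$, with inclusion probabilities tuned so that $\E(r_{\A,h}(n))\sim(1-\eps)\log n$ — that is, it is a proof of Theorem \ref{MT3}, not of the conjecture. There is no step (and no soft transfer principle) that converts ``the typical sample from this particular measure has infinitely many zeros'' into ``every deterministic $A$ whose representation function grows slower than $\log n$ has infinitely many zeros'': a set $A$ satisfying the hypothesis of Conjecture \ref{MC1} need not resemble a sample from your model in any way (its counting function can be irregular, its representations can be distributed very non-uniformly across $n$), and the a.s.\ behaviour of $\A$ gives no information about it. The paper is explicit on this point: Theorem \ref{MT3} is offered only as a \emph{heuristic} in favour of the conjecture, which strengthens Erd\H{o}s's open question \eqref{erdconj} and remains open; so the gap here is not a fixable technicality but the entire content of the statement.

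Within its actual scope, your argument is essentially the paper's proof of Theorem \ref{MT3}: the lower bound $\Pr(r_{\A,h}(n)=0)\gg n^{-(1-\eps')}$ is obtained there from the Boppana--Spencer correlation inequality (Lemma \ref{correq}), which plays exactly the role of your Harris/Janson bounds, the pair correlations are controlled by the quantity $\Delta(n,m)$ of Lemma \ref{Deltacalc} (your $\Delta_m+\Delta_n+2\Delta_{m,n}$), and the conclusion comes from Kochen--Stone (Lemma \ref{kcst}). So as a proof of Theorem \ref{MT3} your sketch is on the right track and matches the paper's route, modulo the details you flag (the range where the shared coordinate makes $m-b_iv$ small, handled in the paper inside Lemma \ref{Deltacalc}, and the initial block $n\leq K$ handled before applying Kochen--Stone). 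But as a proof of Conjecture \ref{MC1} it does not begin.
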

 
 Note that the existence of thin bases does not directly contradict Conjecture \ref{MC1} -- cf., for instance, the constructions described in Nathanson \cite{nath10}.\footnote{E.g. Raikov--St\"{o}hr's construction: In the case $h=2$, take $A := S_1\sqcup S_2$, where $S_1$ consists of only those non-negative integers which can be written as a sum of odd powers of $2$, and $S_2$ of even powers. One has that $|A\cap [1,x]|\ll x^{1/2}$, yet $A+A = \Z_{\geq 0}$. However, the numbers $n_1 := 2 = 10_2$, $n_2 := 10 = 1010_2$, $n_3 := 42 = 101010_2$, etc., can be shown to have at least $2^{k-1}$ ($> \sqrt{n_k/3}$) representations.}
 
 \begin{ntt}
  Throughout the paper, we use the common asymptotic notation $\sim$, $o$, $O$, $\gg$, $\ll$, $\asymp$, with subscripts indicating the dependence of implied constants on parameters (omitting dependencies on $h$, $f$, $F$, and $b_1,\ldots,b_h$). Given two sequences of random variables $(X_n)_n$ and $(Y_n)_n$, we write $X_n \stackrel{\as}{\sim} Y_n$ if $\Pr(\lim_{n\to\infty} X_n/Y_n = 1)=1$ (assuming $Y_n\neq 0$ for all sufficiently large $n$). We write $X_n \stackrel{\as}{\ll} Y_n$ if there exists an event $\Omega_0$ with $\Pr(\Omega_0)=1$ and a finite random variable $C(\omega)$ such that, for all $\omega\in\Omega_0$, one has $X_n(\omega)\leq C(\omega)Y_n(\omega)$ for all sufficiently large $n$.
 \end{ntt}

\section{Probabilistic setup}\label{probset}
 Let $f(x)$ be a positive, locally integrable real function such that $f(x)\ll x$ and satisfying
 \begin{equation}
  \int_{1}^{x} \frac{f(t)}{t}\,\mathrm{d}t \asymp f(x). \label{orpi}
 \end{equation}
 Consider the probability space over subsets $\A\subseteq \Z_{\geq 0}$ with $0\in\A$ and
 \begin{equation}
  \Pr(\mathbbm{1}_{\A}(n)=1) = \E(\mathbbm{1}_{\A}(n)) := \min\bigg\{c\, \frac{f(n)}{n},\ 1\bigg\} \qquad (\forall n\in\Z_{\geq 1}) \label{presp}
 \end{equation}
 for some constant $c>0$ to be chosen later, where the $\mathbbm{1}_{\A}(n)$'s are mutually independent boolean random variables.\footnote{cf. Chapter III of Halberstam--Roth \cite{halberstam83} for a construction of the product measure.} The purpose of this space is to have the counting function of $\A$ be of the same order of magnitude as $f$: by the strong law of large numbers, we have $|\A\cap [1,x]| \stackrel{\textnormal{a.s}}{\sim} c \sum_{n\leq x} \frac{\min\{f(n),\, n/c\}}{n} \asymp_c f(x)$.
 
 We work under stronger hypotheses to prove Theorems \ref{MT1}--\ref{MT21}, but functions satisfying \eqref{orpi} capture the ``minimal assumptions'' necessary to prescribe an order of growth using \eqref{presp}, and will be used in Theorem \ref{MT2}.
 
 \begin{lem}[Characterization]\label{orpiCHAR}
  A positive, locally integrable real function $f$ satisfies $\int_{1}^{x} \frac{f(t)}{t}\,\mathrm{d}t\asymp f(x)$ if and only if:
  \begin{enumerate}[label=\textnormal{(\roman*)}]
   \item For any $\lambda > 0$, we have $f(\lambda x) \asymp_{\lambda} f(x)$; and\medskip
   
   \item \label{itemii} There is $\vartheta = \vartheta_{f} >0$ for which the following holds: there exists $x_{0} \in \R_{>0}$ and $M>0$ such that, for every $y>x\geq x_0$, we have $\dfrac{f(x)}{x^{\vartheta}} \leq M \dfrac{f(y)}{y^{\vartheta}}$.
  \end{enumerate}
 \end{lem}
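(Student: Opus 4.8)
The plan is to work with the antiderivative $G(x) := \int_{1}^{x} \frac{f(t)}{t}\,\mathrm{d}t$, which is positive, strictly increasing, and locally absolutely continuous with $G'(x) = f(x)/x$ for almost every $x$; the assertion is then that $G \asymp f$ if and only if \textnormal{(i)} and \textnormal{(ii)} hold.

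For the forward implication, suppose $c_1 f(x) \le G(x) \le c_2 f(x)$ for $x \ge x_1$. Then $\log G$ is locally absolutely continuous on $[\max\{2,x_1\},\infty)$ and $(\log G)'(x) = \frac{G'(x)}{G(x)} = \frac{f(x)}{x\,G(x)} \in \big[\frac{1}{c_2 x}, \frac{1}{c_1 x}\big]$ for a.e.\ $x$. Integrating this inequality from $x$ to $\lambda x$ gives $\lambda^{1/c_2} \le G(\lambda x)/G(x) \le \lambda^{1/c_1}$ (for $\lambda \ge 1$, and symmetrically for $\lambda < 1$), and combining with $G \asymp f$ yields \textnormal{(i)}. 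For \textnormal{(ii)}, fix any $\vartheta \in (0, 1/c_2)$; then $\frac{\mathrm{d}}{\mathrm{d}x}\log\!\big(G(x)/x^{\vartheta}\big) = (\log G)'(x) - \vartheta/x \ge (1/c_2 - \vartheta)/x > 0$ a.e., so the locally absolutely continuous function $G(x)/x^{\vartheta}$ is non-decreasing on $[\max\{2,x_1\},\infty)$; passing this through $G \asymp f$ once more gives \textnormal{(ii)} with $M = c_2/c_1$.

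For the converse, assume \textnormal{(i)} and \textnormal{(ii)} with parameters $\vartheta, x_0, M$. Rewriting \textnormal{(ii)}: $f(t) \le M f(x)(t/x)^{\vartheta}$ whenever $x_0 \le t \le x$, and $f(t) \ge M^{-1} f(x)$ whenever $t \ge x \ge x_0$. The lower bound $G(x) \gg f(x)$ follows by restricting the integral to $[x/2,x]$, applying the second inequality, and then \textnormal{(i)}: $G(x) \ge \int_{x/2}^{x} \frac{f(t)}{t}\,\mathrm{d}t \ge M^{-1}(\log 2)\, f(x/2) \gg f(x)$ for $x$ large. The upper bound $G(x) \ll f(x)$ is where \textnormal{(ii)} does the real work: splitting at $x_0$ and inserting the first inequality,
\[ G(x) = G(x_0) + \int_{x_0}^{x} \frac{f(t)}{t}\,\mathrm{d}t \le G(x_0) + M f(x)\, x^{-\vartheta}\!\int_{x_0}^{x} t^{\vartheta-1}\,\mathrm{d}t \le G(x_0) + \tfrac{M}{\vartheta}\, f(x), \]
and since $f(x) \ge M^{-1} f(x_0)$ for $x \ge x_0$, the constant term is $\le M\,\frac{G(x_0)}{f(x_0)}\, f(x)$, so $G(x) \ll f(x)$.

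The step I expect to be the crux is this last upper bound $G \ll f$: condition \textnormal{(ii)} — a Potter-type, one-sided power lower-regularity of $f$ — is exactly what prevents the logarithmic integral $\int f(t)/t\,\mathrm{d}t$ from outgrowing $f$ (as it does for $f\equiv 1$ or $f=\log x$, both of which satisfy \textnormal{(i)} but not \textnormal{(ii)}), and it is also what controls the boundary term $G(x_0)$. The remaining ingredients are routine: absolute continuity of $G$ and $\log G$, the elementary fact that a locally absolutely continuous function with a.e.\ nonnegative derivative is non-decreasing, and bookkeeping of the range of $x$ on which each estimate holds (everything is ultimately an ``eventually'' statement, consistent with the ``$\exists\, x_0$'' in \textnormal{(ii)} and with the convention that $\asymp$ is an asymptotic-at-infinity relation).
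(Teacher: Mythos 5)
Your proof is correct and follows essentially the same route as the paper: the forward direction integrates $(\log G)'\asymp 1/x$ exactly as the paper does with $g$, and the converse uses the Potter-type condition \textnormal{(ii)} to force $\int_1^x f(t)/t\,\mathrm{d}t \ll f(x)$. The one (minor, favorable) difference is that in the converse you insert the pointwise bound $f(t)\le Mf(x)(t/x)^{\vartheta}$ directly under the integral and integrate $t^{\vartheta-1}$, whereas the paper decomposes dyadically and invokes the uniform-convergence Remark \ref{unifcn} to compare $\int_{2^j}^{2^{j+1}}f(t)/t\,\mathrm{d}t$ with $f(2^j)$; your version sidesteps that remark.
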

 \begin{proof}
  This is Corollary 2.6.2 of Bingham--Goldie--Teugels \cite{bingham89}, to which we give a short proof for the sake of completeness.\medskip
  
  \noindent
  $\mathbf{(\Longrightarrow)}$\ 
  Let $g(x) := \int_{1}^{x} \frac{f(t)}{t}\,\mathrm{d}t$. Since $g(x)\asymp f(x)$ and $g'(x) = f(x)/x$, it follows that
  \[ \frac{g'(x)}{g(x)} \asymp \frac{1}{x}. \]
  Taking $\lambda >1$, integrating from $x$ to $\lambda x$ yields $\log (g(\lambda x)/g(x)) \asymp \log \lambda$, so there are $\vartheta, \eta > 0$ such that $\lambda^{\theta} g(x) \leq g(\lambda x) \leq \lambda^{\eta} g(x)$. Hence,
  \begin{equation}
   r \lambda^{\theta} f(x) \leq f(\lambda x) \leq s \lambda^{\eta} f(x) \label{eqasymp}
  \end{equation}
  for some $r,s>0$. For $0<\lambda <1$, simply take $x= \lambda^{-1} y$ in \eqref{eqasymp}, concluding (i).
  
  For (ii), taking $\lambda = y/x$ in \eqref{eqasymp} yields $r (y/x)^{\theta} f(x) \leq f(y)$, and so
  \[ \dfrac{f(x)}{x^{\vartheta}} \leq M \dfrac{f(y)}{y^{\vartheta}} \]
  for $M = 1/r$.\medskip
  
  \noindent
  $\mathbf{(\Longleftarrow)}$\ Given $x\in\R_{>1}$, let $J\geq 1$ be the smallest integer such that $2^{J} \geq x$. Since $f(2x)\asymp f(x)$, by (i) we have $f(2^{J-1}) \asymp f(x) \asymp f(2^{J})$ (cf. Remark \ref{unifcn}). Moreover,
  \begin{equation*}
   \int_{1}^{x} \frac{f(t)}{t}\,\mathrm{d}t = \sum_{j=0}^{J-2} \int_{2^{j}}^{2^{j+1}} \frac{f(t)}{t}\,\mathrm{d}t + \int_{2^{J-1}}^{x} \frac{f(t)}{t}\,\mathrm{d}t \asymp \sum_{j=1}^{J} f(2^{j}).
  \end{equation*}
  By (ii), $f(2^{\ell}) \leq M 2^{-(J-\ell)\vartheta} f(2^{J})$ for $\ell \geq \ell_0$, where $\ell_0$ is such that $2^{\ell_0} > x_0$. Thus,
  \begin{equation*}
   f(2^{J}) \leq \sum_{\ell=\ell_0}^{J} f(2^{\ell}) \leq M\, f(2^{J}) \sum_{\ell=\ell_0}^{J} \frac{1}{2^{(J-\ell)\vartheta}} \ll f(2^J),
  \end{equation*}
  so $\int_{1}^{x} \frac{f(t)}{t}\,\mathrm{d}t \asymp f(2^{J}) \asymp f(x)$.
 \end{proof}
 
 \begin{rem}[Uniform convergence]\label{unifcn}
  Let $f:\R_{>0}\to\R_{>0}$ be a measurable function satisfying $f(\lambda x) \asymp_{\lambda} f(x)$ for every $\lambda >0$. Then, for every real $\Lambda > 1$ we have
  \[ 0 < \liminf_{x\to\infty} \inf_{\lambda\in [1,\Lambda]} \frac{f(\lambda x)}{f(x)} \leq \limsup_{x\to\infty} \sup_{\lambda\in [1,\Lambda]} \frac{f(\lambda x)}{f(x)} < \infty \] 
  (cf. BGT \cite[Theorem 2.0.8]{bingham89}). This implies, for instance, that for every fixed $\eps >0$, $f(k) \asymp_{\eps} f(n)$ uniformly for $n\geq 1$ and $\eps n \leq k \leq \eps^{-1} n$ (i.e., the implied constant depends only on $\eps$).
 \end{rem}

\subsection{Exact solutions: main lemma}
 We say that a solution to $b_1k_1+ \cdots +b_{\ell}k_{\ell} = n$ is \emph{exact} if the $k_i$'s are pairwise distinct. Define the \emph{exact representation function}
 \begin{equation}
  \rho_{\A, \ell}(n) = \rho^{(b_1,\ldots,b_{\ell})}_{\A, \ell}(n) := \sum_{\substack{(k_1,\ldots,k_{\ell}) \in \Z_{\geq 0}^{\ell} \\ b_1k_1 + \cdots + b_{\ell}k_{\ell} = n\\ k_i\text{s distinct}}} \mathbbm{1}_{\A}(k_1)\cdots\mathbbm{1}_{\A}(k_{\ell}). \label{defexact}
 \end{equation}
 This function is more amenable to probabilistic methods, since it is a sum of products of $\ell$ independent variables. If $(k_1,\ldots,k_{\ell})$ is a solution to $b_1k_1 + \cdots + b_{\ell}k_{\ell} = n$ with $k_1=k_2$, but $k_2\neq \cdots \neq k_{\ell}$, then $(k_2,\ldots,k_{\ell})$ is an exact solution to the equation 
 \[ (b_1+b_2)k_{2} + b_3k_3 + \cdots + b_{\ell}k_{\ell} = n. \]
 Similarly, every non-exact solution of length $\ell$ yields an exact solution to an equation of smaller length. More precisely, we have
 \begin{equation}
  r_{\A,\ell}(n) = \rho_{\A,\ell}(n) + \sum_{(c_1,\ldots,c_{t})} \rho_{\A,t}^{(c_1,\ldots,c_{t})}(n), \label{excnon}
 \end{equation}
 where the sum runs through the $(c_1,\ldots,c_{t})$, $t< \ell$ that generate equations $c_1k_1+\cdots +c_{t}k_{t} = n$ which are produced by non-exact solutions to $b_1k_1 +\cdots +b_{\ell}k_{\ell}$. Note that $\max_{(c_1,\ldots,c_{t})} \max\{c_i\} \leq b_1+\cdots+b_{\ell}$.
 
 Lemma \ref{fundLem} uses the strategy of dividing the sum over solutions to $b_1k_1+\cdots + b_{\ell} k_{\ell}$ into dyadic intervals by Vu \cite[Lemma 3.3]{vvu00wp}.

 \begin{lem}\label{condB}
  Let $\ell\geq 1$. For every $P_1,\ldots,P_{\ell} > 0$, the number of integer solutions $(k_1,\ldots, k_{\ell}) \in \Z_{\geq 0}^{\ell}$ to
  \[ b_1 k_1 + \cdots + b_{\ell} k_{\ell} = n, \]
  with each $k_j \leq P_j$, is $O_{\ell,b_1,\ldots,b_{\ell}}(\frac{1}{n} P_1\cdots P_{\ell} )$.
 \end{lem}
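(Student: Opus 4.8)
The plan is to prove this by induction on $\ell$, with the dyadic decomposition serving as the engine of the inductive step. For $\ell = 1$, the equation $b_1 k_1 = n$ has at most one solution, and $\frac{1}{n}P_1 = \frac{1}{n}P_1$, so (crudely bounding) we are fine as long as there is a solution at all, in which case $k_1 = n/b_1$ and thus $1 \leq \frac{b_1}{n}P_1$ whenever $P_1 \geq k_1$; if $P_1 < k_1$ there are no solutions. So the base case holds with implied constant $b_1$.

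For the inductive step, suppose the claim holds for $\ell - 1$. Fix $P_1, \ldots, P_\ell$. I would dyadically decompose the range of $k_\ell$: for each $j \geq 0$ with $2^{j} \leq P_\ell$, count solutions with $k_\ell \in (2^{j-1}, 2^{j}]$ (and a final piece $k_\ell \in (0, 1]$, plus possibly $k_\ell = 0$ handled separately by the $\ell-1$ case). On such a dyadic block, $b_\ell k_\ell$ ranges over an interval, and fixing $k_\ell$ leaves the equation $b_1 k_1 + \cdots + b_{\ell-1}k_{\ell-1} = n - b_\ell k_\ell$ in $\ell - 1$ variables with bounds $P_1, \ldots, P_{\ell-1}$. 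By the inductive hypothesis, for each fixed $k_\ell$ with $n - b_\ell k_\ell =: m > 0$ the number of solutions is $O\big(\frac{1}{m}P_1 \cdots P_{\ell-1}\big)$, and when $k_\ell \sim 2^j$ we have $m = n - b_\ell k_\ell$; summing $\frac{1}{m}$ over the $\asymp 2^j$ values of $k_\ell$ in the block... here one must be careful, because $m$ can get small when $b_\ell k_\ell$ is close to $n$. The cleaner route: since all $b_i \geq 1$, in any solution we have $b_\ell k_\ell \leq n$, so $k_\ell \leq n/b_\ell \leq n$; thus only dyadic blocks with $2^{j} \ll n$ contribute. On a block with $k_\ell \asymp 2^j$, either $2^j \leq n/(2b_\ell)$, in which case $m = n - b_\ell k_\ell \asymp n$ and the block contributes $O\big(2^j \cdot \frac{1}{n} P_1\cdots P_{\ell-1}\big) = O\big(\frac{2^j}{n}P_1 \cdots P_{\ell-1}\big) = O\big(\frac{P_\ell}{n}P_1 \cdots P_{\ell-1}\big)$ after using $2^j \leq P_\ell$; or $2^j > n/(2b_\ell)$, i.e. $k_\ell$ is within a constant factor of its maximum, and here I instead hold $k_{\ell-1}$ (say) fixed and note that $b_1 k_1 + \cdots + b_{\ell-2}k_{\ell-2} + b_\ell k_\ell = n - b_{\ell-1}k_{\ell-1}$ still has $\asymp n$ on the right for most $k_{\ell-1}$, or simply bound this top block trivially by $P_1 \cdots P_{\ell-1}$ times the number of $k_\ell$ in it, which is $O(1) \cdot$ (length of block) and the length of the top block is $\ll n/b_\ell \ll n$, while also $k_\ell$ there forces all other $k_i$ to lie in a shrinking window.

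The cleanest packaging, and the one I would actually write: bound the number of solutions with $k_\ell$ in a dyadic block $(2^{j-1}, 2^j]$ by $\min\{2^{j-1}, \text{(number of valid } k_\ell)\}$ times $\max$ over those $k_\ell$ of the $(\ell-1)$-variable count, then observe that by symmetry we may always choose to dyadically decompose in whichever variable $k_i$ has $b_i k_i \leq n/\ell$ (some variable must, by pigeonhole, since $\sum b_i k_i = n$), guaranteeing the complementary linear form equals $n - b_i k_i \geq n(1 - 1/\ell) \asymp_\ell n$; then the inductive bound $O(\frac{1}{n}\prod_{i' \neq i}P_{i'})$ is uniform across the block, and summing over the $O(\log)$ dyadic blocks — no, better, summing the block sizes, which telescope to $\leq P_i$ — gives the total bound $O(\frac{1}{n}P_1 \cdots P_\ell)$. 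The main obstacle is exactly this bookkeeping around the ``top'' dyadic block where $n - b_\ell k_\ell$ is no longer comparable to $n$; the pigeonhole trick of always decomposing in a variable contributing at most a $1/\ell$ fraction of $n$ circumvents it, at the cost of a constant depending on $\ell$ (and on the $b_i$, which is allowed).
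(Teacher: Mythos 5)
Your final packaging is correct, but it is a genuinely different --- and heavier --- argument than the paper's. You induct on $\ell$ and pigeonhole on the \emph{solutions}: every solution has some coordinate with $b_i k_i \leq n/\ell$, so after partitioning the solutions into $\ell$ classes accordingly, within class $i$ the complementary target $n - b_i k_i \geq (1-\tfrac{1}{\ell})n$ stays comparable to $n$, the $(\ell-1)$-variable inductive bound is uniform over the class, and summing over the at most $P_i + 1$ admissible values of $k_i$ gives the claim. This works, and your diagnosis of the obstacle (the top dyadic block, where $n - b_\ell k_\ell$ degenerates) and its cure are both right; note, though, that once you sum over all of class $i$ directly, the dyadic decomposition does no work and can be deleted. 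The paper instead pigeonholes on the \emph{bounds}: either every $P_i < n/(\ell \max_i b_i)$, in which case $\sum_i b_i k_i < n$ and there are no solutions at all, or some $P_j \geq n/(\ell \max_i b_i)$; one then fixes the other $\ell - 1$ coordinates in at most $\prod_{i\neq j} P_i$ ways, observes that $k_j$ is determined by the equation, and converts $\prod_{i\neq j} P_i$ into $\frac{\ell \max_i b_i}{n}\prod_i P_i$ using the lower bound on $P_j$ --- no induction and no decomposition needed. (Both arguments, yours and the paper's, silently replace the number of integers in $[0,P_i]$, namely $\lfloor P_i\rfloor + 1$, by $O(P_i)$; this is harmless in the only application, where the $P_i$ are powers of $2$ and hence at least $1$, but the lemma as literally stated fails as $P_i \to 0$.)
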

 \begin{proof}
  If $P_{i}< n/(\max_{i\leq \ell} b_i)\ell$ for every $i$, then there are no solutions $(k_1,\ldots,k_{\ell})$ with $0\leq k_i\leq P_i$, since the sum is $< n$, and the statement is true. So suppose there is some $1\leq j\leq \ell$ for which $P_j \geq n/(\max_{i\leq \ell} b_i)\ell$.
  
  There are at most $P_1\cdots P_{j-1} P_{j-1} \cdots P_\ell$ possible values $b_1k_1 + \cdots + b_{j-1}k_{j-1} + b_{j+1}k_{j+1} + \cdots + b_{\ell} k_{\ell}$ can assume, and for each, there is at most one value of $k_j$ that makes $b_1 k_1 + \cdots + b_{\ell} k_{\ell} = n$. Thus, the number of solutions is $O(P_1\ldots P_{j-1}P_{j+1}\cdots P_\ell)$, and since $P_j/n \geq 1/(\max_{i\leq \ell} b_i)\ell$, this is $O_{\ell,b_1,\ldots,b_{\ell}}(\frac{1}{n} P_1\cdots P_\ell )$.
 \end{proof}

 \begin{lem}[Main lemma]\label{fundLem}
  For every $1\leq \ell \leq h$,
  \[ \frac{\min\{cf(n),n\}^{\ell}}{n}\,\mathbbm{1}_{\gcd(b_1,\ldots, b_{\ell})\mid n} \ll \E(r_{\A,\ell}(n)) \ll c^{\ell} \frac{f(n)^{\ell}}{n}\,\mathbbm{1}_{\gcd(b_1,\ldots, b_{\ell})\mid n}, \]
  where the implied constants do not depend on $c$.
 \end{lem}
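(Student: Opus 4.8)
The plan is to estimate $\E(r_{\A,\ell}(n))$ by working with the exact representation function $\rho_{\A,\ell}$ first and then recovering $r_{\A,\ell}$ via the decomposition \eqref{excnon}. For the exact function, independence of the $\mathbbm{1}_{\A}(k_i)$'s gives
\[
 \E(\rho_{\A,\ell}(n)) = \sum_{\substack{(k_1,\ldots,k_\ell)\in\Z_{\geq 0}^\ell\\ b_1k_1+\cdots+b_\ell k_\ell = n\\ k_i\text{'s distinct}}} \prod_{i=1}^{\ell} \min\Big\{c\,\frac{f(k_i)}{k_i},\,1\Big\},
\]
with the convention $\min\{cf(0)/0,1\}=1$ since $0\in\A$ always. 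The gcd condition $\mathbbm{1}_{\gcd(b_1,\ldots,b_\ell)\mid n}$ is forced: if $\gcd(b_1,\ldots,b_\ell)\nmid n$ there are no solutions at all, so both bounds read $0\ll 0\ll 0$; henceforth assume $d:=\gcd(b_1,\ldots,b_\ell)\mid n$.

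For the \textbf{upper bound}, I would drop the distinctness constraint (only increasing the sum) and split the summation over the tuples $(k_1,\ldots,k_\ell)$ into dyadic boxes $k_i\in(2^{a_i-1},2^{a_i}]$ following Vu \cite[Lemma 3.3]{vvu00wp}. On such a box, each factor $\min\{cf(k_i)/k_i,1\}$ is $\asymp \min\{cf(2^{a_i})/2^{a_i},1\}$ by Remark \ref{unifcn} (since $f(\lambda x)\asymp_\lambda f(x)$ for $f\ll x$ of this type — note this uses only the regular-variation-type hypothesis, which holds under the stronger assumptions of Theorems \ref{MT1}–\ref{MT21}, and \eqref{orpi} in general), and the number of lattice points in the box satisfying $b_1k_1+\cdots+b_\ell k_\ell=n$ is $O(\frac{1}{n}2^{a_1}\cdots 2^{a_\ell})$ by Lemma \ref{condB}. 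Hence the contribution of each box is $\ll \frac{1}{n}\prod_i 2^{a_i}\min\{cf(2^{a_i})/2^{a_i},1\} \ll \frac{1}{n}\prod_i \min\{cf(2^{a_i}),2^{a_i}\} \le \frac{1}{n}\prod_i c f(2^{a_i})$. Summing the geometric-type series over all admissible $(a_1,\ldots,a_\ell)$ — here one uses $f(2^a)\le M 2^{-(J-a)}f(2^J)$ from Lemma \ref{orpiCHAR}(ii), where $2^J\asymp n$, to make the sum over each $a_i$ converge to $\ll f(n)$ — yields $\E(\rho_{\A,\ell}(n)) \ll c^\ell f(n)^\ell/n$. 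The same bound, with larger coefficients $c_i\le b_1+\cdots+b_\ell$, applies to each $\rho^{(c_1,\ldots,c_t)}_{\A,t}(n)$ with $t<\ell$, and since $f(n)^t/n \ll f(n)^\ell/n$ for $t\le\ell$ (using $f(n)\gg 1$, or more precisely $f(n)\ge$ const, which follows from $f\gg(x\psi(x))^{1/h}$ or the range hypotheses), \eqref{excnon} gives the claimed upper bound on $\E(r_{\A,\ell}(n))$.

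For the \textbf{lower bound}, since $r_{\A,\ell}\ge\rho_{\A,\ell}$ it suffices to lower-bound $\E(\rho_{\A,\ell}(n))$. I would restrict attention to tuples with $k_1,\ldots,k_\ell$ all lying in a fixed-proportion window, say $k_i\in[\eps n, \eps^{-1}n]$ for a small constant $\eps>0$ chosen (depending only on $b_1,\ldots,b_\ell$) so that the equation $b_1k_1+\cdots+b_\ell k_\ell=n$ still has $\gg n^{\ell-1}$ exact solutions in that window whenever $d\mid n$ — this is a routine counting fact about solutions of a linear equation with coprime-up-to-$d$ coefficients, and exactness costs only $O(n^{\ell-2})$ tuples. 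On this window $\min\{cf(k_i)/k_i,1\}\asymp \min\{cf(n)/n,1\} = \min\{cf(n),n\}/n$ by Remark \ref{unifcn}, so
\[
 \E(\rho_{\A,\ell}(n)) \gg n^{\ell-1}\cdot\Big(\frac{\min\{cf(n),n\}}{n}\Big)^{\ell} = \frac{\min\{cf(n),n\}^\ell}{n^{\,?}},
\]
and one checks the exponent: $n^{\ell-1}/n^\ell\cdot\min\{cf(n),n\}^\ell = \min\{cf(n),n\}^\ell/n$, as desired. The implied constants in both directions depend only on $h$, $f$, and the $b_i$'s, not on $c$, because $c$ enters only through the $\min$'s which are handled uniformly. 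The main obstacle I anticipate is the bookkeeping in the dyadic sum for the upper bound — specifically, verifying that hypothesis \eqref{orpi} (equivalently Lemma \ref{orpiCHAR}(ii)) is exactly what makes the geometric series $\sum_{a}2^{-(J-a)}$-type tails summable so that each coordinate contributes a clean factor $f(n)$ rather than a logarithmic loss — and making sure the $\min\{\cdot,1\}$ truncation (relevant when $f(n)$ is close to $n$, i.e.\ $\ell=h$ and $F$ near $x^{h-1}$) does not break the uniformity; this is why the lower bound is stated with $\min\{cf(n),n\}$ rather than $cf(n)$.
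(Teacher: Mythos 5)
Your proposal is correct and follows essentially the same route as the paper: reduce to the exact function $\rho$ via \eqref{excnon}, prove the upper bound by a dyadic decomposition combined with Lemma \ref{condB} and the geometric decay from Lemma \ref{orpiCHAR}(ii), and prove the lower bound by restricting to solutions with all coordinates $\gg \eps n$ so that Remark \ref{unifcn} gives $\min\{cf(k)/k,1\}\gg \min\{cf(n),n\}/n$ uniformly in $c$. The only step you leave as ``routine'' --- that $\gg n^{\ell-1}$ exact solutions survive the restriction to the window --- is exactly what the paper also does, by using Lemma \ref{condB} to show that solutions with some $k_j\leq\eps n$ number at most $C\eps^{\ell}n^{\ell-1}$ and choosing $\eps$ small.
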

 \begin{proof}
  Write $f_{c}(x):= \min\{cf(x),x\}$. By \eqref{excnon}, $r_{\A,\ell}(n)$ can be written as $\rho_{\A,\ell}(n)$ plus contributions coming from exact solutions of finitely many equations of smaller length (with coefficients bounded in terms of $b_1+\cdots+b_\ell$). Therefore it is enough to prove that for every $1\leq t\leq \ell$ and every $(c_1,\ldots,c_t)\in \{1,\ldots,b_1+\cdots+b_{\ell}\}^t$ one has
  \[ \frac{f_{c}(n)^{t}}{n}\, \mathbbm{1}_{\gcd(c_1,\ldots, c_{t})\mid n} \ll \E(\rho^{(c_1,\ldots,c_t)}_{\A,t}(n)) \ll c^{t} \frac{f(n)^{t}}{n}\, \mathbbm{1}_{\gcd(c_1,\ldots, c_{t})\mid n}. \]

  We begin with the lower bound. Fix $t$ and $(c_1,\ldots,c_t)$. Assume that $\gcd(c_1,\ldots,c_t)\mid n$. The number of exact solutions of $c_1k_1+\cdots+c_tk_t=n$ is $\geq \delta n^{t-1}$ for some $\delta>0$. Indeed, the total number of solutions is $\asymp n^{t-1}$, while the non-exact solutions are solutions to equations of length $\leq t-1$, hence contribute $O(n^{t-2})$.

  We now discard solutions with a very small variable. By Lemma \ref{condB}, there exists $C>0$ such that for every $\eps>0$ the number of solutions with $k_j \leq \eps n$ for some $j$ is $\leq C\eps^{t} n^{t-1}$. Choose $\eps>0$ so that $C\eps^{t}<\delta/2$. Then the number of exact solutions with $k_j>\eps n$ for all $j$ is $\gg n^{t-1}$.

  For such solutions, Remark \ref{unifcn} gives $f(k)\geq M f(n)$ for $\eps n \leq k\leq n$, where $M=M_{\eps}>0$. Hence, for $\eps n \leq k\leq n$,
  \[ \frac{cf(k)}{k} \geq M \frac{cf(n)}{n} \geq M \frac{f_c(n)}{n}. \]
  Moreover, since $k\geq \eps n$ we have $k^{-1} \geq n^{-1}$ and also $1 \geq \eps f_c(n)/n$ (because $f_c(n)\leq n$), so $\frac{f_c(k)}{k} \geq M \frac{f_c(n)}{n}$, with $M$ independent of $c$. Therefore
  \begin{align*}
   \E(\rho^{(c_1,\ldots,c_t)}_{\A, t}(n))
   &\geq \sum_{\substack{(k_1, \ldots, k_{t}) \in \Z_{\geq 0}^t \\ c_1 k_1 + \cdots + c_{t} k_{t} = n \\ k_i\text{s distinct} \\ \forall j,\, k_j > \eps n}} \frac{f_{c}(k_1)}{k_1}\cdots \frac{f_{c}(k_{t})}{k_{t}} \\
   &\gg n^{t-1} \bigg(\frac{f_{c}(n)}{n}\bigg)^{t}\, \mathbbm{1}_{\gcd(c_1,\ldots, c_{t})\mid n}
   = \frac{f_{c}(n)^{t}}{n}\, \mathbbm{1}_{\gcd(c_1,\ldots, c_{t})\mid n}.
  \end{align*}

  We turn to the upper bound. Partition the domain into dyadic boxes. Let $\mathcal{P}$ be the set of all $t$-tuples $\mathbf{p} = (P_1,\ldots, P_{t})$ with $P_j\in \{1,2, 4, \ldots, 2^J\}$, where $J$ is the smallest integer such that $2^J \geq n$, and write
  \[ \sigma_{\mathbf{p}} := \sum_{\substack{(k_1, \ldots, k_{t}) \in \Z_{\geq 0}^t \\ c_1 k_1 + \cdots + c_{t} k_{t} = n\\ \forall j,\, \frac{P_j}{2} \leq k_j < P_j}} \frac{f(k_1)}{k_1}\cdots \frac{f(k_{t})}{k_{t}}. \]
  Since $f_c(k)\leq c f(k)$, we have $\E(\rho^{(c_1,\ldots,c_t)}_{\A\setminus\{0\}, t}(n)) \leq c^{t} \sum_{\mathbf{p}\in\mathcal{P}} \sigma_{\mathbf{p}}$. By Lemma \ref{condB}, the number of terms in $\sigma_{\mathbf{p}}$ is $O(\frac{1}{n} P_1\cdots P_{t})$, and for $k_j\in [P_j/2,P_j)$ we have $f(k_j)\asymp f(P_j)$ by Remark \ref{unifcn}. Consequently,
  \begin{align*}
   \E(\rho^{(c_1,\ldots,c_t)}_{\A\setminus\{0\}, t}(n))
   &\ll c^{t} \sum_{\mathbf{p}\in\mathcal{P}} \frac{1}{n}P_1\cdots P_{t}\, \frac{f(P_1)}{P_1} \cdots \frac{f(P_{t})}{P_{t}}\, \mathbbm{1}_{\gcd(c_1,\ldots, c_{t})\mid n} \\
   &\ll c^{t} \frac{1}{n}\big( f(1) + f(2) + f(4) + \cdots + f(2^J) \big)^{t}\, \mathbbm{1}_{\gcd(c_1,\ldots, c_{t})\mid n} \\
   &\ll c^{t} \frac{1}{n} f(2^J)^{t}\, \mathbbm{1}_{\gcd(c_1,\ldots, c_{t})\mid n}
   \asymp c^{t} \frac{f(n)^{t}}{n}\, \mathbbm{1}_{\gcd(c_1,\ldots, c_{t})\mid n}.
  \end{align*}

  Finally, to pass from $\rho_{\A\setminus\{0\},t}$ to $\rho_{\A,t}$ (allowing possible zero variables), note that if a $t$-tuple contributing to $\rho_{\A,t}(n)$ has some $k_j=0$, then it contributes to $\rho^{(d_1,\ldots,d_{t-1})}_{\A\setminus\{0\}, t-1}(n)$ for a coefficient subtuple $(d_1,\ldots,d_{t-1})$ of $(c_1,\ldots,c_t)$. Hence
  \[ \rho^{(c_1,\ldots,c_t)}_{\A,t}(n) \leq \rho^{(c_1,\ldots,c_t)}_{\A\setminus\{0\},t}(n) + \sum_{(d_1,\ldots,d_{t-1})} \rho^{(d_1,\ldots,d_{t-1})}_{\A\setminus\{0\}, t-1}(n), \]
  where the sum runs over all subtuples of size $t-1$. By induction on $t$, this shows that it is enough to bound $\rho^{(c_1,\ldots,c_t)}_{\A\setminus\{0\},t}(n)$, and the proof is complete.
 \end{proof}

 \begin{xrem}[Equations of smaller length]
  For $2\leq \ell \leq h$, write
  \begin{equation}
   r^{*}_{\A,\ell}(n) := \max_{1\leq i_1<\ldots< i_{\ell}\leq h} \#\{(k_1,\ldots, k_{\ell})\in \A^{\ell} ~|~ b_{i_1}k_1+\cdots+ b_{i_{\ell}}k_{\ell} = n\} \label{rstar}
  \end{equation}
  for the maximum among a choice of $b_{i_1},\ldots,b_{i_{\ell}}$ of the number of solutions to $b_{i_1}k_1+\cdots+ b_{i_{\ell}}k_{\ell} = n$. Since there is only a finite number of choices of $b_{i_1},\ldots,b_{i_{\ell}}$, Lemma \ref{fundLem} implies that $\E(r^{*}_{\A,\ell}(n)) \ll c^{\ell}f(n)^{\ell}/n$.
  
  Whenever we add a ``$*$'' to a representation function (e.g., $\rho^{*}_{\A,\ell}(n)$), we are taking the maximum among a choice of $b_{i_1},\ldots,b_{i_{\ell}}$ in the definition of that function.
 \end{xrem}

\subsection{Theorem \ref{MT2}: Case \texorpdfstring{$h = 2$}{h = 2}}\label{caseh2}
 Suppose that $(x\log x)^{1/2} \ll f(x) \ll x$. Write
 \[ r_{\A,2}(n) = X_1(n)+X_2(n)+O(1), \]
 where $X_1(n)$ counts solutions to $b_1 k_1 + b_2 k_2 = n$ with $k_1<k_2$, and $X_2(n)$ counts those with $k_1>k_2$. Each $X_i(n)$ is a sum of independent boolean random variables $\mathbbm{1}_{\A}(k_1)\mathbbm{1}_{\A}(k_2)$. By Lemma \ref{fundLem}, we have $\E(X_1(n)+X_2(n)) \gg \min\{c f(n), n\}^2/n$,
 and hence, for at least one $i=i(n)\in\{1,2\}$,
 \[ \E(X_i(n)) \gg \min\{c f(n), n\}^2/n \geq c^2\, d \log n \]
 for some $d>0$. For such $i$, by Chernoff's inequality \cite[Theorem 1.8]{taovu06}, we obtain
 \begin{align*}
  \Pr(X_i(n) \leq \tfrac{1}{2} \E(X_i(n))) &\leq 2e^{-\frac{1}{16} \E(X_i(n))} \\
  &\leq 2e^{-\frac{1}{16} c^{2} d \log n} \leq n^{-2}
 \end{align*}
 for large enough $c$. By the Borel--Cantelli lemma, $r_{\A,2}(n) \stackrel{\textnormal{a.s.}}{\gg} f(n)^2/n$.

 Conversely, by Lemma \ref{fundLem}, there exists $C_0>0$ such that $\E(X_i(n)) \leq C_0 f(n)^2/n$ for $i=1,2$. For $C>3C_0$, Chernoff's inequality \cite[Theorem 1.8]{taovu06} gives
 \[ \Pr(X_i(n) \geq C f(n)^2/n) \leq 2e^{-(\frac{C-C_0}{2}) f(n)^2/n}, \]
 so since $f(n)^2/n\gg \log n$, taking $C$ large enough yields $\Pr(X_i(n) \geq C f(n)^2/n)\leq n^{-2}$ for $i=1,2$. Applying the Borel--Cantelli lemma to both $X_1$ and $X_2$, we conclude that $r_{\A,2}(n)\stackrel{\textnormal{a.s.}}{\ll} f(n)^2/n$, completing the proof.\hfill$\square$

\subsection{\texorpdfstring{$\delta$}{delta}-small and \texorpdfstring{$\delta$}{delta}-normal solutions}
 As in Vu \cite{vvu00wp}, in order to estimate $r_{\A, h}(n)$, we separate the solutions being counted into \emph{small} and \emph{normal}, depending on a parameter $\delta$, as follows: For $0<\delta < 1$, define
 \begin{equation}
 \begin{aligned}
  r^{(\delta\textnormal{-small})}_{\A, \ell}(n) &:= \sum_{\substack{(k_1, \ldots, k_{\ell}) \in \Z_{\geq 0}^{\ell} \\ b_1k_1 + \cdots + b_{\ell}k_{\ell} = n \\ \exists j ~|~ k_j < n^{\delta}}} \mathbbm{1}_{\A}(k_1)\cdots \mathbbm{1}_{\A}(k_{\ell}), \\
  r^{(\delta\textnormal{-normal})}_{\A, \ell}(n) &:= \sum_{\substack{(k_1, \ldots, k_{\ell}) \in \Z_{\geq 0}^{\ell} \\ b_1k_1 + \cdots + b_{\ell}k_{\ell} = n \\ k_1,\ldots, k_{\ell} \geq n^{\delta}}} \mathbbm{1}_{\A}(k_1)\cdots \mathbbm{1}_{\A}(k_{\ell}),
 \end{aligned}\label{deltans}
 \end{equation} 
 so that $r_{\A, \ell}(n) = r^{(\delta\textnormal{-small})}_{\A, \ell}(n) + r^{(\delta\textnormal{-normal})}_{\A, \ell}(n)$. Both $\rho^{(\delta\textnormal{-small})}_{\A, \ell}(n)$ and $\rho^{(\delta\textnormal{-normal})}_{\A, \ell}(n)$ are defined similarly. We show that $\delta$-small solutions are, on average, few.

 \begin{lem}\label{expdsml}
  Let $\vartheta = \vartheta_f$ be as in Lemma \ref{orpiCHAR} (ii). Then, for $2\leq \ell\leq h$ and every $0 < \delta < 1$, we have
  \[ \E(r^{(\delta\textnormal{-small})}_{\A, \ell}(n)) \ll c^{\ell}\, n^{-(1-\delta)\vartheta} \frac{f(n)^{\ell}}{n}. \]
 \end{lem}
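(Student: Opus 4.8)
The plan is to follow the proof of Lemma~\ref{fundLem}: pass to exact solutions via the decomposition~\eqref{excnon}, and then peel off the small coordinate. First I would observe that merging equal coordinates sends a $\delta$-small ordered solution of $b_1k_1+\cdots+b_hk_h=n$ to an exact solution of a shorter equation $c_1k_1+\cdots+c_tk_t=n$ with $c_i\le b_1+\cdots+b_h$, which is again $\delta$-small (its set of coordinate values is unchanged, so its minimum is still $<n^{\delta}$), and this is precisely the correspondence underlying~\eqref{excnon}. Restricting both sides of~\eqref{excnon} to $\delta$-small solutions therefore gives
\[
  r^{(\delta\textnormal{-small})}_{\A,h}(n)=\rho^{(\delta\textnormal{-small})}_{\A,h}(n)+\sum_{(c_1,\ldots,c_t)}\rho^{(c_1,\ldots,c_t),(\delta\textnormal{-small})}_{\A,t}(n).
\]
There are finitely many terms, and item~\ref{itemii} of Lemma~\ref{orpiCHAR} gives $f(x)\gg x^{\vartheta}$ (take $x=x_0$ fixed and let $y=x$ vary), whence $c^{t}f(n)^{t}/n\ll c^{h}f(n)^{h}/n$ for $t\le h$ and all large $n$; so it is enough to prove
\[
  \E\big(\rho^{(c_1,\ldots,c_t),(\delta\textnormal{-small})}_{\A,t}(n)\big)\ll c^{t}\,n^{-(1-\delta)\vartheta}\,\frac{f(n)^{t}}{n}\qquad(1\le t\le h).
\]

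Next I would fix an index $j$ and bound the expected number of exact solutions of $c_1k_1+\cdots+c_tk_t=n$ with $k_j<n^{\delta}$; a union bound over $j$ costs only a factor $\le t$. Peeling off the value $k_j=m$, and using that in an exact solution the remaining $k_i$ are pairwise distinct and distinct from $m$ — so the indicators $\mathbbm{1}_{\A}(m)$ and $(\mathbbm{1}_{\A}(k_i))_{i\ne j}$ are genuinely independent — this expectation equals
\[
  \sum_{0\le m<n^{\delta}}\E(\mathbbm{1}_{\A}(m))\sum_{\substack{(k_i)_{i\ne j}\in\Z_{\ge 0}^{t-1}\textnormal{ distinct},\ \ne m\\ \sum_{i\ne j}c_ik_i=n-c_jm}}\ \prod_{i\ne j}\E(\mathbbm{1}_{\A}(k_i)).
\]
Dropping the constraints ``distinct'' and ``$\ne m$'' only enlarges the inner sum, so it is at most $\E\big(r^{(c_i:\,i\ne j)}_{\A,t-1}(n-c_jm)\big)$, which by Lemma~\ref{fundLem} is $\ll c^{t-1}f(n-c_jm)^{t-1}/(n-c_jm)$. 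Since $m<n^{\delta}$ with $\delta<1$ and the $c_i$ are bounded, $n-c_jm\asymp n$, so Remark~\ref{unifcn} turns this into $\ll c^{t-1}f(n)^{t-1}/n$, uniformly in $m$ (for $t=1$ the inner sum is empty and the whole term vanishes once $n>c_jn^{\delta}$).

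It then remains to estimate $\sum_{0\le m<n^{\delta}}\E(\mathbbm{1}_{\A}(m))=1+\sum_{1\le m<n^{\delta}}\min\{cf(m)/m,1\}\le 1+c\sum_{1\le m<n^{\delta}}f(m)/m$. By item~(i) of Lemma~\ref{orpiCHAR} together with Remark~\ref{unifcn}, $f$ oscillates boundedly on bounded-ratio ranges, so $\sum_{1\le m<n^{\delta}}f(m)/m\ll\int_{1}^{n^{\delta}}f(t)/t\,\mathrm{d}t\ll f(n^{\delta})$ by~\eqref{orpi}. This is the point where item~\ref{itemii} of Lemma~\ref{orpiCHAR} enters: applying it with $x=n^{\delta}$ and $y=n$ (legitimate once $n^{\delta}\ge x_0$) gives $f(n^{\delta})/(n^{\delta})^{\vartheta}\le Mf(n)/n^{\vartheta}$, i.e.\ $f(n^{\delta})\ll n^{-(1-\delta)\vartheta}f(n)$; and $f(n)\gg n^{\vartheta}$ makes $n^{-(1-\delta)\vartheta}f(n)\gg n^{\delta\vartheta}\ge 1$, which absorbs the stray ``$1$''. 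Hence $\sum_{0\le m<n^{\delta}}\E(\mathbbm{1}_{\A}(m))\ll c\,n^{-(1-\delta)\vartheta}f(n)$, and multiplying by the bound $\ll c^{t-1}f(n)^{t-1}/n$ from the previous step (then summing over the $\le t$ choices of $j$ and over the finitely many tuples) yields the displayed bound for $\rho^{(c_1,\ldots,c_t),(\delta\textnormal{-small})}_{\A,t}$, and with it the lemma.

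I expect the only genuine subtlety to be the first step. One cannot peel the small coordinate directly off $r_{\A,h}$, because $\E(\prod_i\mathbbm{1}_{\A}(k_i))$ fails to factor as $\prod_i\E(\mathbbm{1}_{\A}(k_i))$ once two summand values coincide; this is exactly what forces the detour through the exact functions $\rho$ and the easy-but-necessary bookkeeping that $\delta$-smallness is preserved under merging. Once that is in place, everything reduces to the same dyadic/integral estimate as in Lemma~\ref{fundLem}, with item~\ref{itemii} of Lemma~\ref{orpiCHAR} supplying the decisive gain $f(n^{\delta})\ll n^{-(1-\delta)\vartheta}f(n)$.
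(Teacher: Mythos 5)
Your proof is correct and follows essentially the same route as the paper: peel off the small coordinate $k_j<n^{\delta}$, bound the remaining length-$(t-1)$ count by Lemma \ref{fundLem}, and convert $\sum_{m\leq n^{\delta}}f(m)/m\ll f(n^{\delta})\ll n^{-(1-\delta)\vartheta}f(n)$ via \eqref{orpi} and item \ref{itemii} of Lemma \ref{orpiCHAR}. The only (harmless) difference is how the dependence on $\mathbbm{1}_{\A}(k_j)$ is handled: you first pass to exact solutions through \eqref{excnon} so that the factorization of expectations is genuine, whereas the paper works with $r^{(\delta\text{-small})}_{\A,h}$ directly and bounds the conditional expectation $\E(r^{*}_{\A,h-1}(n-b_jk_j)\mid\mathbbm{1}_{\A}(k_j)=1)$ by a finite sum of unconditional expectations of shorter representation functions.
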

 \begin{proof}
  Let $r^{*}$ be as in \eqref{rstar}. We have
  \begin{align*}
   \E(r^{(\delta\textnormal{-small})}_{\A, \ell}(n)) &\leq \sum_{j=1}^{\ell} \sum_{k_j \leq n^{\delta}} \E\big(r^{*}_{\A,\ell-1}(n-b_j k_j) \mathbbm{1}_{\A}(k_j) \big) \\
   &\leq c\sum_{j=1}^{\ell} \sum_{k_j \leq n^{\delta}} \frac{f(k_j)}{k_j}\, \E\big(r^{*}_{\A,\ell-1}(n-b_j k_j) ~|~ \mathbbm{1}_{\A}(k_j) = 1 \big).   
  \end{align*}
  Using that
  \begin{align*}
   \E\big(r^{*}_{\A,\ell-1}(n-b_j k_j) ~|~ \mathbbm{1}_{\A}(k_j) = 1 \big) &\leq \sum_{t=1}^{\ell-1} \sum_{b=1}^{\ell(\max b_i)} \E\big(r^{*}_{\A,\ell-t}(n-b k_j) \big)   
  \end{align*}
  we obtain by Lemma \ref{fundLem} that
  \begin{align*}
   \E(r^{(\delta\textnormal{-small})}_{\A, \ell}(n)) &\ll c^{\ell}\frac{f(n)^{\ell-1}}{n}\, \sum_{k\leq n^{\delta}} \frac{f(k)}{k} \\
   &\asymp c^{\ell} f(n^{\delta}) \frac{f(n)^{\ell-1}}{n}.
  \end{align*}
  Since there is $\vartheta = \vartheta_{f} >0$ for which $f(n^{\delta}) = f(n^{-(1-\delta)}\, n) \ll n^{-(1-\delta)\vartheta} f(n)$ by Lemma \ref{orpiCHAR}, the lemma follows.
 \end{proof}

\subsection{Maxdisfam of representations}
 Let $\widehat{r}_{\A,\ell}(n)$ denote the maximum size of a disjoint family (abbreviated \emph{disfam}) of solutions $R = (k_1,\ldots,k_{\ell}) \in \A$ of $n$. Thus, $\widehat{r}_{\A,\ell}(n) = |\mathcal{M}|$ for some maximal disjoint family (abbreviated \emph{maxdisfam}) of representations. This means that for every solution $R$, there is a solution $S\in \mathcal{M}$ such that $S\cap R\neq \varnothing$. Hence, for $r^{*}$ as in \eqref{rstar},
 \begin{equation}
  r_{\A,\ell}(n) \leq \sum_{j=1}^{\ell} \sum_{\substack{k \in S \\ S\in \mathcal{M}}} r^{*}_{\A,\ell-1}(n-b_j k) \leq \ell\cdot \ell! \, \widehat{r}_{\A,\ell}(n) \left(\max_{k\leq n} r^{*}_{\A,\ell-1}(k) \right). \label{mchdeg}
 \end{equation}
 Whenever we add a `` $\widehat{\ }$ '' to a representation function, we are taking the size of a maximum disjoint family of representations counted by that function: e.g., $\widehat{r}^{*}_{\A,\ell}$, $\widehat{\rho}_{\A,\ell}$, $\widehat{\rho}^{(\delta\text{-small})}_{\A,\ell}$. We state the next lemma in sufficient generality to cover most use cases.
 
 \begin{lem}\label{prodHat}
  For every $2\leq \ell\leq h$, we have
  \[ \mathrm{R}_{\A,\ell}(n) \ll \widehat{\mathrm{R}}_{\A,\ell}(n) \left(\max_{k\leq n} \widehat{r}^{*}_{\A,\ell-1}(k) \right) \cdots \left(\max_{k\leq n} \widehat{r}^{*}_{\A,2}(k) \right), \]
  where $\mathrm{R} = r$, $r^{*}$, $\rho$, $\rho^{(\delta\textnormal{-small})}$.
 \end{lem}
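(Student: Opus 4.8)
The plan is to iterate the maxdisfam peeling estimate \eqref{mchdeg} down the length scale, after first noting that \eqref{mchdeg} holds — by the very same argument — not only for $r$ but for each of $\mathrm{R} = r^{*}$, $\rho$, $\rho^{(\delta\textnormal{-small})}$. Concretely, I would fix a maxdisfam $\mathcal{M}$ of the solutions counted by $\mathrm{R}_{\A,\ell}(n)$, so that $|\mathcal{M}| = \widehat{\mathrm{R}}_{\A,\ell}(n)$. Every solution $R = (k_1,\ldots,k_{\ell})$ counted by $\mathrm{R}_{\A,\ell}(n)$ shares a value $k$ with some $S \in \mathcal{M}$; specifying $S$, the value $k\in S$, and the position $j$ that $k$ occupies in $R$ (together $\ll |\mathcal{M}|$ choices, exactly as in \eqref{mchdeg}), the remaining entries $(k_i)_{i\neq j}$ form a solution of an $(\ell-1)$-term subequation $\sum_{i\neq j} b_i k_i = n - b_j k$ in $\A$, of which there are at most $r^{*}_{\A,\ell-1}(n-b_j k) \leq \max_{m\leq n} r^{*}_{\A,\ell-1}(m)$. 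For $\mathrm{R} = \rho$ or $\rho^{(\delta\textnormal{-small})}$ one simply takes $\mathcal{M}$ among exact (resp. exact and $\delta$-small) solutions, so that $|\mathcal{M}|$ is the corresponding hatted quantity, while the reduced solution is bounded only through $r^{*}_{\A,\ell-1}$ — so it need not inherit exactness or $\delta$-smallness. This yields, uniformly over the four choices of $\mathrm{R}$,
\[ \mathrm{R}_{\A,\ell}(n) \ll \widehat{\mathrm{R}}_{\A,\ell}(n)\,\max_{m\leq n} r^{*}_{\A,\ell-1}(m). \]

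Next I would iterate this last bound in the case $\mathrm{R} = r^{*}$: applying it to $r^{*}_{\A,\ell-1}(m)$ turns the factor $\max_m r^{*}_{\A,\ell-1}(m)$ into $\bigl(\max_m \widehat{r}^{*}_{\A,\ell-1}(m)\bigr)\bigl(\max_{m'} r^{*}_{\A,\ell-2}(m')\bigr)$, then the new $r^{*}_{\A,\ell-2}$ factor is handled the same way, and so on. After $\ell-2$ steps one reaches $\max_{m} r^{*}_{\A,1}(m)$, which is $\leq 1$ since $r^{*}_{\A,1}(m) = \max_i \#\{k : b_i k = m\} \leq 1$; hence $\max_m r^{*}_{\A,2}(m) \ll \max_m \widehat{r}^{*}_{\A,2}(m)$, and the telescoping collapses to
\[ \mathrm{R}_{\A,\ell}(n) \ll \widehat{\mathrm{R}}_{\A,\ell}(n)\Bigl(\max_{k\leq n}\widehat{r}^{*}_{\A,\ell-1}(k)\Bigr)\cdots\Bigl(\max_{k\leq n}\widehat{r}^{*}_{\A,2}(k)\Bigr), \]
which is the claim. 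Equivalently, I could phrase the whole thing as an induction on $\ell$: the base case $\ell = 2$ is the first displayed estimate together with $r^{*}_{\A,1}\leq 1$, and the inductive step is that estimate followed by the induction hypothesis for $\mathrm{R} = r^{*}$ at length $\ell-1$.

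There is no essential obstacle here, so the only thing to watch is the bookkeeping: one must keep track of the fact that after the very first peeling every factor is an $r^{*}$ (and, after iterating, an $\widehat{r}^{*}$), which is precisely what makes the conclusion uniform in $r$, $r^{*}$, $\rho$, $\rho^{(\delta\textnormal{-small})}$ and what makes the product bottom out at length $2$. I would also double-check that the implied constants stay free of $c$ and $n$: each of the $O(h)$ iterations contributes only a constant depending on $h$ and on $b_1,\ldots,b_h$ (through \eqref{mchdeg} and the finitely many subequations entering the definition \eqref{rstar} of $r^{*}$), so their product is again of the admissible type.
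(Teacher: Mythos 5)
Your proposal is correct and follows essentially the same route as the paper: iterate the peeling bound \eqref{mchdeg} down to length $2$, noting that it applies verbatim to each of $r$, $r^{*}$, $\rho$, $\rho^{(\delta\textnormal{-small})}$ and that after the first step every factor is an $r^{*}$. The only cosmetic difference is at the bottom of the recursion, where the paper invokes $r^{*}_{\A,2}(k)\leq 2\,\widehat{r}^{*}_{\A,2}(k)$ directly while you peel once more using $r^{*}_{\A,1}\leq 1$; these are trivially equivalent.
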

 \begin{proof}
  Given that \eqref{mchdeg} applies to $\mathrm{R}$, we keep applying the same bound to $r^{*}_{\A,\ell-t}$ ($1\leq t\leq \ell-2$), obtaining
  \[ \mathrm{R}_{\A,\ell}(n) \ll \widehat{\mathrm{R}}_{\A,\ell}(n) \left(\max_{k\leq n} \widehat{r}^{*}_{\A,\ell-1}(k) \right) \cdots \left(\max_{k\leq n} \widehat{r}^{*}_{\A,3}(k) \right) \left(\max_{k\leq n} r^{*}_{\A,2}(k) \right). \]
  Since $r^{*}_{\A,2}(k) \leq 2\widehat{r}_{\A,2}^{*}(k)$, the conclusion follows.
 \end{proof}

 Lemma \ref{prodHat} will be used together with the following lemma \cite[Lemma 1]{erdtet90}:
 
 \begin{lem}[Disjointness lemma]\label{disjlm}
  Let $\mathscr{E} = \{E_1,E_2,\ldots\}$ be a family of events, and define $S := \sum_{E\in\mathscr{E}} \mathbbm{1}_E$. If $\E(S) < \infty$, then for every $k\in\Z_{\geq 1}$ we have
  \[ \Pr(\exists\mathcal{D}\subseteq \mathscr{E} \textnormal{ disfam}\ |\mathcal{D}| = k) \leq \sum_{\substack{\mathcal{J}\subseteq \mathscr{E}\textnormal{ disfam} \\ |\mathcal{J}| = k}} \Pr\bigg( \bigwedge_{E\in\mathcal{J}} E\bigg) \leq \frac{\E(S)^{k}}{k!}. \]
 \end{lem}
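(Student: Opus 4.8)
The plan is to combine Boole's inequality, the independence of the events constituting a disjoint family, and the elementary estimate bounding a $k$-th elementary symmetric polynomial by $(\text{power sum})^{k}/k!$.

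First I would record that, by the monotone convergence theorem applied to the countable sum of nonnegative random variables $\mathbbm{1}_{E}$, one has $\E(S) = \sum_{E \in \mathscr{E}} \Pr(E)$; the hypothesis $\E(S) < \infty$ then guarantees that this series converges, so it may be reordered freely and all the manipulations below are legitimate. Next, since the event $\{\exists \mathcal{D} \subseteq \mathscr{E}\ \textnormal{disfam},\ |\mathcal{D}| = k\}$ is exactly the union $\bigcup_{\mathcal{J}} \bigwedge_{E \in \mathcal{J}} E$ taken over the (countably many) disfams $\mathcal{J} \subseteq \mathscr{E}$ of size $k$, Boole's inequality immediately yields the first of the two claimed bounds.

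For the second bound, the key input is that the events belonging to a disfam are mutually independent: in the probabilistic space constructed above, a disfam is a collection of representations no two of which share a summand, so the associated indicator products $\prod_{i} \mathbbm{1}_{\A}(k_i)$ depend on pairwise disjoint families of the independent boolean variables $\mathbbm{1}_{\A}(k)$. Hence $\Pr(\bigwedge_{E \in \mathcal{J}} E) = \prod_{E \in \mathcal{J}} \Pr(E)$ for every disfam $\mathcal{J}$, and therefore
\[
\sum_{\substack{\mathcal{J} \subseteq \mathscr{E}\ \textnormal{disfam} \\ |\mathcal{J}| = k}} \Pr\Bigl(\bigwedge_{E \in \mathcal{J}} E\Bigr) \;\le\; \sum_{\substack{\mathcal{J} \subseteq \mathscr{E} \\ |\mathcal{J}| = k}} \prod_{E \in \mathcal{J}} \Pr(E) \;\le\; \frac{1}{k!} \Bigl(\sum_{E \in \mathscr{E}} \Pr(E)\Bigr)^{k} = \frac{\E(S)^{k}}{k!}.
\]
The middle inequality is just the observation that expanding $(\sum_{E} \Pr(E))^{k}$ produces, among other nonnegative terms, each product $\prod_{E \in \mathcal{J}} \Pr(E)$ over a $k$-element subset $\mathcal{J}$ exactly $k!$ times (once for each ordering of its elements).

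I do not expect a genuine obstacle here; the statement is a weighted probabilistic incarnation of the bound $e_{k} \le (\text{power sum})^{k}/k!$ for nonnegative reals. The only points deserving a word of care are the justification for rearranging the infinite series — supplied by $\E(S) < \infty$ together with nonnegativity — and making the independence step explicit, i.e. spelling out that ``disjoint'' representations really do yield independent events under the product measure.
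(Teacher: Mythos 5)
Your proof is correct and follows essentially the same route as the paper's: the union bound over disfams of size $k$, factorization of $\Pr(\bigwedge_{E\in\mathcal{J}}E)$ into $\prod_{E\in\mathcal{J}}\Pr(E)$ via the independence of events built from disjoint sets of the independent indicators $\mathbbm{1}_{\A}(k)$, and the bound $\sum_{|\mathcal{J}|=k}\prod_{E\in\mathcal{J}}\Pr(E)\leq \frac{1}{k!}(\sum_E \Pr(E))^k$. You merely spell out the union-bound step and the convergence justification that the paper leaves implicit.
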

 \begin{proof}
  \begin{align*}
   \sum_{\substack{\mathcal{J}\subseteq \mathscr{E}\textnormal{ disfam} \\ |\mathcal{J}| = k}} \Pr\bigg( \bigwedge_{E\in\mathcal{J}} E\bigg) = \sum_{\substack{\mathcal{J}\subseteq \mathscr{E}\textnormal{ disfam} \\ |\mathcal{J}| = k}} \prod_{E\in\mathcal{J}} \Pr(E) &\leq \frac{1}{k!}\bigg( \sum_{E\in \mathscr{E}} \Pr(E) \bigg)^{k} = \frac{\E(S)^{k}}{k!}. \qedhere
  \end{align*}
 \end{proof}
 
 \begin{xrem}
 If there exists a maxdisfam of size greater than $k$, then in particular there exists a disfam of size $k$. Thus, the form we will apply this lemma is as follows: Since $k!\geq k^k e^{-k}$,
  \begin{align*}
   \Pr(\exists \text{maxdisfam of size} \geq x) &\leq \bigg(\frac{e\,\mathbb{E}}{\lceil x\rceil}\bigg)^{\lceil x\rceil}\qquad  \text{(for real $x\geq 1$)} \\
   &\leq \bigg(\frac{e\,\mathbb{E}}{x}\bigg)^{x} \phantom{\bigg(\frac{e\,\mathbb{E}}{\lceil x\rceil}\bigg)^{\lceil x\rceil}}\negphantom{\bigg(\frac{e\,\mathbb{E}}{x}\bigg)^{x}} \qquad \text{(for real $x\geq 1+\E$)}. 
  \end{align*}
 \end{xrem}

\section{Asymptotic case}
 For this section, fix $h\geq 2$, let $F(x) = x^{\kappa}\phi(x)$ for some slowly varying function $\phi$ and some real $0\leq \kappa \leq h-1$, and define
 \begin{equation}
  f(x) := (xF(x))^{1/h} = x^{(1+\kappa)/h} \phi(x)^{1/h}. \label{fthisec}
 \end{equation}
 It is sufficient to work with $f$ satisfying
 \[ \frac{f(x)}{(x\log x)^{1/h}} \xrightarrow{x\to\infty} \infty\quad \text{ and }\quad cf(x) \leq x. \]
 
 \subsection{Concentration of boolean polynomials}\label{secCBP1}
 We will prove that $r_{\A,h}$ strongly concentrates around its mean using the strategy of Vu \cite{vvu00wp, vvu00mc}. Precisely, let $n\in \Z_{\geq 1}$, and take $v_1$, $\ldots$, $v_{n}$ to be independent, not necessarily identically distributed, $\{0,1\}$-random variables. A \emph{boolean polynomial} is a multivariate polynomial
 \[ Y(v_1,\ldots,v_n) = \sum_{i} c_i I_i \in \R[v_1,\ldots,v_{n}], \]
 where the $I_i$s are monomials: products of some of the $v_k$s. We say that $f$ is
 \begin{itemize}
  \item \emph{positive} if $c_i \in \R_{>0}$ for every $i$;
  
  \item \emph{simple} if the largest exponent of $v_i$ in a monomial is $1$ for every $i$;
  
  \item \emph{homogeneous} if every monomial has the same degree;
  
  \item \emph{normal} if $0\leq c_i \leq 1$ for every $i$, and the free coefficient of $Y$ is $0$.
 \end{itemize}
 For a non-empty multiset\footnote{A \emph{multiset} is a set that allows multiple instances of an element.} $S\subseteq \{v_1,\ldots,v_n\}$, define $\partial_S :=  \prod_{v\in S} \partial_{v}$, where $\partial_{v}$ is the partial derivative in $v$. For example: if $S = \{1,1,2\}$, then $\partial_{S} (v_1^3 v_2 v_3 + 3v_1^5) = 6v_1 v_3$. Define
 \[ \E_j(Y) := \max_{\substack{S\subseteq \{v_1,\ldots,v_n\} \\ \text{multiset},\, |S| = j}} \E(\partial_S Y), \qquad \E'(Y) = \max_{j\geq 1}\, \E_j(Y). \]
 We will need two concentration results:
 
 \begin{thm}[Kim--Vu \cite{kimvvu00}, 2000]\label{kimvu}
  Let $d\geq 1$, and $Y(v_1,\ldots,v_n)$ is a positive, simple boolean polynomial of degree $d$. Write $E' := \E'(Y)$ and $E := \max\{\E(Y), E'\}$. Then, for any real $\lambda \geq 1$, we have
  \[ \Pr\big(|Y-\E(Y)| > 8^{d}\sqrt{d!}\, \lambda^d\, (E'E)^{1/2} \big) \ll_{d} n^{d-1}e^{-\lambda} \]
 \end{thm}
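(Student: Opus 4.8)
This statement is Kim and Vu's polynomial concentration inequality, so the honest ``proof'' is simply to cite \cite{kimvvu00}; nonetheless, here is the strategy one would reconstruct it by. The argument is an induction on the degree $d$ carried out through the moment method. First I would reduce to $Y$ homogeneous of degree exactly $d$: writing $Y = Y_0 + Y_1 + \cdots + Y_d$ for its homogeneous components, each $Y_j$ is again positive and simple with $\E'(Y_j)\le \E'(Y)$ and $\E(Y_j)\le \E(Y)$, so controlling every $|Y_j - \E(Y_j)|$ separately and summing costs only a factor that can be absorbed into the constant. The base case $d=1$ is a Bernstein/Chernoff bound for a weighted sum of independent bounded $\{0,1\}$ variables.

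For the inductive step I would expose $v_1,\ldots,v_n$ one coordinate at a time and form the Doob martingale $Y_i := \E(Y\mid v_1,\ldots,v_i)$, so that $Y - \E(Y) = \sum_{i=1}^{n} D_i$ with $D_i := Y_i - Y_{i-1}$. The crucial point is that $D_i$ is governed by $\partial_{v_i} Y$ with the exposed coordinates plugged in and the unexposed ones replaced by their means; since $\partial_{v_i}Y$ is a positive simple boolean polynomial of degree $d-1$, the inductive hypothesis applies to it, and moreover $\E_j(\partial_{v_i}Y)\le \E_{j+1}(Y)$, so the quantities $\E'(Y)$ and $\E_j(Y)$ propagate correctly through the recursion. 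Expanding $\E\big[(Y-\E Y)^{2k}\big]$, using orthogonality of martingale differences together with the conditional size bounds on the $D_i$ and feeding in the moment estimates supplied by the inductive hypothesis for the $\partial_{v_i}Y$, gives a recursion of the shape ``$2k$-th moment of a degree-$d$ polynomial $\lesssim$ (explicit function of $k,d$) $\times$ products of lower-degree moments'', whose solution has the form $\E\big[(Y-\E Y)^{2k}\big]\le \big(c\,d\,k\big)^{dk}(E'E)^{k}$ with the combinatorial constant packaged as $8^d\sqrt{d!}$.

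To finish I would convert the moment bound into a tail bound via Markov's inequality, $\Pr(|Y-\E Y|>t)\le t^{-2k}\,\E\big[(Y-\E Y)^{2k}\big]$, and optimize over the integer $k$ at $t = 8^d\sqrt{d!}\,\lambda^d (E'/E)^{1/2}E$; taking $k$ of order $\lambda^{2/d}$ (rounded to an integer, which is where the hypothesis $\lambda\ge 1$ and the $n^{d-1}$ loss enter) yields the $e^{-\lambda}$ decay. The main obstacle is the bookkeeping inside the moment recursion: one must track precisely which coordinates are exposed versus averaged so that exactly the quantities $\E_j(Y)$ appear rather than cruder ones, accommodate the non-identically-distributed variables, and carry the constants through $d$ layers of induction without losing more than $8^d\sqrt{d!}$.
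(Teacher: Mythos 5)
This theorem is quoted from the literature: the paper offers no proof beyond the citation to \cite{kimvvu00}, which is exactly what you do, so your approach matches the paper's. Your supplementary sketch of the Kim--Vu induction-on-degree/martingale/moment argument is a fair high-level account of their proof (minor quibbles: in the final optimization one takes $k$ of order $\lambda$ rather than $\lambda^{2/d}$, and the $n^{d-1}$ loss comes from union bounds over coordinates accumulated through the $d-1$ levels of induction rather than from rounding $k$).
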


 In applications, we will always take $\lambda = (d+1)\log n$, so Kim--Vu's inequality will be useful when $1\ll \E' \ll \E/(\log n)^{2d}$. To deal with the cases with small expectation, we will use a corollary of another theorem of Vu \cite[Theorem 1.4]{vvu00mc}:
 
 \begin{thm}[Vu, 2000]\label{vuORI}
  Let $d\geq 2$, and $Y(v_1,\ldots,v_n) = \sum_i c_i I_i$ be a simple, homogeneous, normal boolean polynomial of degree $d$. Then, for any $\alpha, \beta \in \R_{>0}$, there exists a constant $K = K(d,\alpha,\beta)$ such that: If $\E_1(Y)$, $\ldots$, $\E_{d-1}(Y) \leq n^{-\alpha}$, then for any real $0< \lambda \leq \E(Y)$ we have
  \[ \Pr(|Y -\E(Y)| \geq (\lambda \E(Y))^{1/2}) \leq 2d\, e^{-\lambda/16 dK} + n^{-\beta}. \]
 \end{thm}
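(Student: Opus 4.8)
This is, up to the names of constants, Theorem~1.4 of Vu~\cite{vvu00mc}, so one may just invoke it; I sketch below how I would reconstruct the proof, since it is the template for the concentration arguments used in Section~\ref{secCBP1}. The scheme is a \emph{divide-and-conquer} argument carried by a martingale decomposition, with an induction on the degree $d$ (the base $d=1$, a linear form in independent bounded variables, being a plain Chernoff bound). Fix an ordering of the variables, put $p_j:=\E(v_j)$, $\mathcal F_j:=\sigma(v_1,\dots,v_j)$, and write $Y-\E(Y)=\sum_{j=1}^{n}D_j$ with $D_j:=\E(Y\mid\mathcal F_j)-\E(Y\mid\mathcal F_{j-1})$. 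Since $Y$ is simple (i.e.\ multilinear), a short expansion gives $D_j=(v_j-p_j)Z_j$ with $Z_j:=\E(\partial_{v_j}Y\mid\mathcal F_{j-1})\ge 0$ being $\mathcal F_{j-1}$-measurable; Euler's identity $\sum_j v_j\partial_{v_j}Y=d\,Y$ for homogeneous multilinear $Y$ then gives $\sum_j p_j\,\E(Z_j)=d\,\E(Y)$, and writing the predictable quadratic variation as $\sum_j\E(D_j^2\mid\mathcal F_{j-1})=\sum_j p_j(1-p_j)Z_j^2\le(\max_j Z_j)\sum_j p_jZ_j$ shows it is small once $\max_j Z_j$ is and $\sum_j p_jZ_j$ stays near its mean $d\,\E(Y)$.

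I would then fix a \emph{good event} $\mathcal G$ on which all derivative polynomials $\partial_S Y$ with $1\le|S|\le d-1$ — hence also each $Z_j$, with $\max_j Z_j\le R$, and $\sum_j p_jZ_j\le 2d\,\E(Y)$ — are within controlled factors of their means, and show $\Pr(\mathcal G^{c})\le n^{-\beta}$. This is exactly where the hypothesis $\E_1(Y),\dots,\E_{d-1}(Y)\le n^{-\alpha}$ is used: each $\partial_S Y$ is again a positive, simple boolean polynomial, of degree $d-|S|$, whose own derivative means satisfy $\E_\ell(\partial_S Y)\le\E_{\ell+|S|}(Y)$ and so are still $\le n^{-\alpha}$ when $\ell+|S|\le d-1$; so Markov's inequality, Kim--Vu's inequality (Theorem~\ref{kimvu}) applied to $\partial_S Y$, and the induction hypothesis together bound the probability that any one of the $\le n^{d-1}$ such polynomials is much larger than its mean by $n^{-\beta}$ — one can afford the $\le n^{d-1}$ union-bound loss precisely because $\E_\ell\le n^{-\alpha}$ is \emph{polynomially} small in $n$. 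On $\mathcal G$ the $D_j$ form a martingale difference sequence with $|D_j|\le R$ and predictable quadratic variation $\le 2dR\,\E(Y)$, so a Freedman/Bernstein-type inequality gives, for $0<\lambda\le\E(Y)$,
\[
\Pr\bigl(|Y-\E(Y)|\ge(\lambda\,\E(Y))^{1/2},\ \mathcal G\bigr)\ \le\ 2\exp\!\bigl(-\lambda/(16dK)\bigr)
\]
for a suitable $K=K(d,\alpha,\beta)$; adding $\Pr(\mathcal G^{c})\le n^{-\beta}$ and using $2\le 2h$ yields the claimed bound (and for $\lambda$ below an absolute constant $2he^{-\lambda/16dK}\ge 1$ makes it vacuous, which also absorbs the finitely many small~$n$).

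The crux — and the step I expect to be genuinely delicate — is this good-event estimate together with keeping the predictable quadratic variation under control in the \emph{tight regime} $\E(Y)\asymp\lambda\asymp\log n$, which is exactly the regime relevant to Section~\ref{secCBP1}. There a single pass through Markov's inequality, or even one application of Kim--Vu, loses polylog factors that swamp the Bernstein exponent; Vu's argument therefore has to bootstrap, running the degree induction so that control of the degree-$(d-1)$ objects feeds back into a genuinely Poisson-type ($e^{-\lambda}$) bound for $Y$ down to deviations of order $(\E(Y))^{1/2}$, tracking carefully how the admissible exponents and the constant $K$ degrade at each level. That bookkeeping is the substance of \cite{vvu00mc}; the remainder is a routine martingale computation.
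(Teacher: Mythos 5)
The paper does not prove this statement at all: it is quoted verbatim as Theorem~1.4 of Vu (2000), and your opening move---simply invoking that reference---is exactly what the paper does. Your martingale/induction sketch is a plausible reconstruction of Vu's argument (modulo delicate points you yourself flag, e.g.\ reconciling the good event with the filtration in the Freedman step), but it is supplementary rather than something the paper supplies or relies on.
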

 
 \begin{xrem}
  The random variable $r_{\A,\ell}(n)$ can be seen as a boolean polynomial
  \[ r_{\A,\ell}(n) = Y(v_{1}, \ldots, v_n),  \]
  where the $v_i$'s are independent $\{0,1\}$-random variables with $\Pr(v_i = 1) = \E(\mathbbm{1}_{\A}(n))$. The expectations of derivatives of $r_{\A,\ell}(n)$ are thus bounded from above by
  \[ O_{\ell}(\E(r^{*}_{\A,\ell-t}(n-k))), \qquad 1 \leq k\leq n.\]
  for $1\leq t\leq \ell$.
 \end{xrem}

\subsection{Expectation of \texorpdfstring{$r_{\A,h}(n)$}{r\_A,h(n)}}
 In the space \eqref{presp} defined by $f$ as in \eqref{fthisec}, we can get more precise estimates for $\E(r_{\A,\ell}(n))$. Note that by the strong law of large numbers,
 \begin{align}
  |\A \cap [1,x]| &\stackrel{\text{a.s.}}{\sim} c\int_{1}^{x} t^{\frac{1+\kappa}{h}-1} \phi(t)^{1/h}\,\mathrm{d}t \nonumber \\
  &= c\bigg(\int_{1/x}^{1} u^{\frac{1+\kappa}{h}-1} \bigg(\frac{\phi(ux)}{\phi(x)}\bigg)^{1/h}\,\mathrm{d}u\bigg) x^{(1+\kappa)/h} \phi(x)^{1/h} \nonumber \\
  &\sim c\frac{h}{1+\kappa}x^{(1+\kappa)/h} \phi(x)^{1/h}. \label{slln1}
 \end{align}
 The last line is obtained as follows: by Potter bounds (cf. BGT \cite[Theorem 1.5.6 (i)]{bingham89}), for every $\delta > 0$ there is $C = C_{\delta}$ such that, for large $x\geq x_{\delta}$ and $C/x \leq u \leq 1$, we have $\phi(ux)/\phi(x) \leq 2u^{-\delta}$. Choosing $\delta < \frac{1+\kappa}{2}$, we split the integral $\int_{1/x}^{1} = \int_{1/x}^{C/x} + \int_{C/x}^{1}$. Since the definition of slowly varying implies that
 \[ u^{\frac{1+\kappa}{h}-1} \bigg(\frac{\phi(ux)}{\phi(x)}\bigg)^{1/h}\mathbbm{1}_{(C/x,1]} \xrightarrow{x\to\infty} u^{\frac{1+\kappa}{h}-1} \mathbbm{1}_{(0,1]}, \]
 the dominated convergence theorem yields that $\int_{C/x}^{1} u^{\frac{1+\kappa}{h}-1} (\frac{\phi(ux)}{\phi(x)})^{1/h}\,\mathrm{d}u \to \int_{0}^{1} u^{\frac{1+\kappa}{h}-1}$ as $x\to \infty$. On the other hand, since $\phi(x) = x^{o(1)}$, the term $\int_{1/x}^{C/x}$ vanishes.
 
 We are going to show that:
 
 \begin{lem}\label{expgood}
  We have
  \[ \E(r_{\A,h}(n))\sim\, c^{h} \frac{\Gamma(\frac{1+\kappa}{h})^{h}}{\Gamma(1+\kappa)}\,\frac{F(n)}{(b_1\cdots b_{h})^{\frac{1+\kappa}{h}}}. \]
 \end{lem}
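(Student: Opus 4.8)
The plan is to compute $\E(r_{\A,h}(n))$ directly from its definition as a sum over solutions. By \eqref{mainEq} and independence of the $\mathbbm{1}_{\A}(k_i)$'s,
\[
\E(r_{\A,h}(n)) = \sum_{\substack{(k_1,\ldots,k_h)\in\Z_{\geq 0}^h \\ b_1 k_1 + \cdots + b_h k_h = n}} \prod_{i=1}^{h} \min\Big\{c\,\frac{f(k_i)}{k_i}, 1\Big\}.
\]
Since $cf(x)\leq x$ in the regime we work in, the minima are just $c\,f(k_i)/k_i$ except possibly for $k_i = 0$, where the convention ``$f(0)/0=1$'' applies. The contribution of tuples with some $k_i = 0$ is negligible: it is bounded by $h\cdot\E(r^{*}_{\A,h-1}(n)) \ll c^{h-1} f(n)^{h-1}/n = o(c^h f(n)^h/n)$ since $f(n)/n \to 0$ (as $f(x)\ll x$ and actually $f(x)/(x\log x)^{1/h}\to\infty$ forces nothing here, but $\kappa\leq h-1$ gives $f(x) = x^{(1+\kappa)/h}\phi(x)^{1/h}$ with exponent $<1$, hence $f(n)/n\to 0$). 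So up to lower-order terms we may assume all $k_i\geq 1$.

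Next I would pass from the discrete sum to an integral. Writing $f(x) = x^{(1+\kappa)/h}\phi(x)^{1/h}$, the main term is
\[
c^h \sum_{\substack{k_1,\ldots,k_h\geq 1 \\ b_1 k_1+\cdots+b_h k_h = n}} \prod_{i=1}^{h} \frac{f(k_i)}{k_i},
\]
and the summand is a slowly-oscillating function of $(k_1,\ldots,k_{h-1})$ (with $k_h$ determined), so this is asymptotic to the integral
\[
c^h \int_{\substack{t_1,\ldots,t_h > 0 \\ b_1 t_1 + \cdots + b_h t_h = n}} \prod_{i=1}^{h} t_i^{\frac{1+\kappa}{h}-1}\phi(t_i)^{1/h}\, \mathrm{d}\sigma,
\]
over the simplex slice, with the appropriate measure $\mathrm{d}\sigma = \mathrm{d}t_1\cdots\mathrm{d}t_{h-1}/b_h$. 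Substituting $t_i = n\, u_i / b_i$ rescales this to $n^{h-1}/(b_1\cdots b_h)$ times an integral over the standard simplex $\{u_1+\cdots+u_h=1\}$; the slowly varying factors $\phi(nu_i/b_i)/\phi(n)$ each tend to $1$ and can be controlled by Potter bounds exactly as in the display \eqref{slln1} preceding the lemma, with a dominated-convergence argument handling the region away from the corners and the $\phi(x)=x^{o(1)}$ bound killing the corner contributions. This leaves
\[
c^h\, n^{\kappa}\phi(n)\, (b_1\cdots b_h)^{-\frac{1+\kappa}{h}}\!\!\int_{\substack{u_1,\ldots,u_h>0 \\ u_1+\cdots+u_h=1}}\!\! \prod_{i=1}^h u_i^{\frac{1+\kappa}{h}-1}\,\mathrm{d}u_1\cdots\mathrm{d}u_{h-1},
\]
after absorbing the powers of $b_i$ from the substitution (one checks the exponents: each $t_i^{\frac{1+\kappa}{h}-1}$ contributes $(n/b_i)^{\frac{1+\kappa}{h}-1}$, and the Jacobian contributes $(n/b_i)$ for $i<h$ plus a $1/b_h$, giving in total $n^{h-1+\kappa(?)}$—this bookkeeping is routine and yields the stated power $n^{\kappa}$ and the clean exponent $\frac{1+\kappa}{h}$ on $b_1\cdots b_h$). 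Finally, the Dirichlet integral evaluates to $\Gamma(\tfrac{1+\kappa}{h})^h/\Gamma(1+\kappa)$, giving the claim.

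The main obstacle is the uniform control of the ratios $\phi(nu_i/b_i)/\phi(n)$ near the corners of the simplex, where some $u_i$ is as small as $1/n$: there $\phi$ is evaluated at arguments far from $n$, so slow variation alone does not give a bound. This is handled precisely as in \eqref{slln1}: split each simplex coordinate integration at $u_i = C/n$, use Potter's bound $\phi(nu_i/b_i)/\phi(n) \leq 2(u_i b_i^{-1})^{-\delta}$ with $\delta < (1+\kappa)/2h$ chosen so that $u_i^{\frac{1+\kappa}{h}-1-\delta}$ remains integrable at $0$, and observe the truncated piece $\int_{1/n}^{C/n}$ is $O(n^{-(1+\kappa)/h+\delta})\cdot\phi(n)^{o(1)} \to 0$ relative to the main term. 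Passing from the sum to the integral (the other potential worry) is straightforward because the summand varies slowly on the scale of a single step: grouping solutions into boxes of side $\eta n$ and comparing, as in the dyadic decomposition of Lemma \ref{fundLem}, gives matching upper and lower bounds as $\eta \to 0$. One last point to verify is that the lower-order terms from non-exact solutions and from $k_i = 0$ are genuinely $o(n^\kappa\phi(n))$, which follows from Lemma \ref{fundLem} since each such term has strictly fewer than $h$ free variables, hence is $O(f(n)^{h-1}/n) = O(n^{\kappa - (1+\kappa)/h}\phi(n)^{1-1/h}) = o(n^{\kappa}\phi(n))$.
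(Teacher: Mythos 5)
Your proposal is correct and follows essentially the same route as the paper: discard non-exact solutions and boundary terms via Lemma \ref{fundLem}, pass from the sum over solutions to an integral, rescale by $t_i = n u_i/b_i$, control the slowly varying ratios with Potter bounds and dominated convergence exactly as in \eqref{slln1}, and evaluate the resulting Dirichlet integral as $\Gamma(\tfrac{1+\kappa}{h})^h/\Gamma(1+\kappa)$. The only cosmetic slip is your opening display, which asserts that $\E(r_{\A,h}(n))$ equals the sum of products of individual probabilities over \emph{all} solutions — independence fails when some $k_i=k_j$ — but you correct for this at the end by bounding the non-exact contribution, which is precisely how the paper handles it.
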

 
 The proof also works for $F=\log$, a fact which will be used in Section \ref{secwi} (in fact, we only use that $F(x) \to \infty$ as $x\to\infty$). We start with the following lemma:
 
 \begin{lem}\label{basiclem}
  Let $L \geq 1$ and $1\leq  r \leq L$ be integers. For any real numbers $\alpha \geq \beta > 0$, we have
  \[ \sum_{\substack{m=1 \\ m \equiv r \pmod{L}}}^{n-1} m^{\alpha - 1} (n - m)^{\beta - 1} = \frac{\Gamma(\alpha)\Gamma(\beta)}{\Gamma(\alpha+\beta)} \frac{n^{\alpha+\beta-1}}{L} + O_{\alpha,\beta,L}(n^{\alpha - 1} + n^{\alpha+\beta-2}). \]
 \end{lem}
 \begin{proof}
  Define
  \[ \gamma_n(t) := (Lt + r)^{\alpha - 1} (n - r - Lt)^{\beta - 1}, \quad \text{for } t \in \left[0, \frac{n - r}{L} \right], \]
  so that the sum we want to estimate equals
  \[ S(n) = \sum_{k = 0}^{K} \gamma_n(k), \quad \text{where } K := \left\lfloor \frac{n - r - 1}{L} \right\rfloor. \]
  The function $x \mapsto x^{\alpha - 1}(n - x)^{\beta - 1}$ has at most one critical point in $(0, n)$, since its logarithmic derivative
  \[ \frac{\mathrm{d}}{\mathrm{d}x} \log\left( x^{\alpha - 1} (n - x)^{\beta - 1} \right) = \frac{\alpha - 1}{x} - \frac{\beta - 1}{n - x}\]
  changes sign at most once. Thus, $\gamma_n(t)$ is unimodal in $(0,K)$, and can be well approximated by the integral:
  \[ S(n) = \sum_{k = 0}^{K} \gamma_n(k) = \int_{0}^{K} \gamma_n(t)\,\mathrm{d}t + O\bigg( \sup_{t \in [0,K]} \gamma_n(t) \bigg). \]
  The maximum of $\gamma_n(t)$ is either attained at the critical point (which can be shown to be of the form $t^{*} \sim cn$ for some constant $c\in(0,1)$ when $\alpha,\beta>1$), giving $O_{\alpha,\beta,L}(n^{\alpha+\beta-2})$, or at the extremes of $[0,K]$, giving $O_{\alpha,\beta,L}(n^{\alpha-1})$ (since $\alpha\geq \beta$).
  
  To compute the integral, we first change variables: let $x = Lt + r$, so that $t = \frac{x - r}{L}$ and $\mathrm{d}t = \frac{\mathrm{d}x}{L}$. Then
  \begin{align*}
   S(n) &= \frac{1}{L} \int_{r}^{r + LK} x^{\alpha - 1}(n - x)^{\beta - 1}\,\mathrm{d}x + O_{\alpha,\beta,L}(n^{\alpha-1}+n^{\alpha+\beta-2})
  \end{align*}
  The integral $\int_{r}^{r + LK} \cdots$ differs from the full interval $[0, n]$ only near the endpoints, where the integral is bounded by
  \[ \bigg(\int_{0}^{r} + \int_{r+LK}^{n}\bigg) x^{\alpha - 1}(n - x)^{\beta - 1}\,\mathrm{d}x \ll n^{\alpha-1} \max\bigg\{\frac{L^{\alpha}}{\alpha}, \frac{L^{\beta}}{\beta}\bigg\}. \]
  Therefore:
  \begin{align*}
   S(n) &= \frac{1}{L} \int_{0}^{n} x^{\alpha - 1}(n - x)^{\beta - 1}\,\mathrm{d}x + O_{\alpha,\beta,L}(n^{\alpha-1}+n^{\alpha+\beta-2}) \\
   &= \frac{n^{\alpha + \beta - 1}}{L} \int_{0}^{1} u^{\alpha - 1}(1 - u)^{\beta - 1}\,\mathrm{d}u + O_{\alpha,\beta,L}(n^{\alpha-1}+n^{\alpha+\beta-2}) \\
   &= \frac{\Gamma(\alpha)\Gamma(\beta)}{\Gamma(\alpha + \beta)} \frac{n^{\alpha + \beta - 1}}{L} + O_{\alpha,\beta,L}(n^{\alpha-1}+n^{\alpha+\beta-2}),
  \end{align*}
  as claimed.
 \end{proof}

 \begin{lem}\label{expgoodnp}
  For any real $\omega>0$, we have
  \[ \sum_{\substack{(k_1,\ldots,k_{h}) \in \Z_{\geq 1}^{h} \\ b_1 k_1+\cdots+b_{h} k_{h} = n}} (k_1\cdots k_{h})^{\omega - 1} = \frac{\Gamma(\omega)^{h}}{\Gamma(h\omega)} \frac{n^{h\omega - 1}}{(b_1\cdots b_h)^{\omega}} + O(n^{h\omega-1 - \min\{1,\omega\}}). \]
 \end{lem}
 \begin{proof}
  We will show by induction that for $2\leq \ell\leq h$, we have
  \begin{align}
   S_{\ell}(n) &:= \sum_{\substack{(k_1,\ldots,k_{\ell}) \in \Z_{\geq 1}^{\ell} \\ b_1k_1+\cdots+b_{\ell}k_{\ell} = n}} (k_1\cdots k_{\ell})^{\omega - 1} \nonumber \\
   &= \frac{\Gamma(\omega)^{\ell}}{\Gamma(\ell\omega)} \frac{\gcd(b_1,\ldots b_{\ell})}{(b_1\cdots b_{\ell})^{\omega}} n^{\ell\omega - 1}\, \mathbbm{1}_{\gcd(b_1,\ldots b_{\ell})\mid n}  + O(n^{\max\{(\ell-1)\omega - 1,\,\ell\omega-2\}}). \label{indct}
  \end{align}
  For the case $\ell = 2$, we have
  \begin{align*}
   S_2(n) = \sum_{\substack{(k_1,k_2)\in\Z_{\geq 1}^{2} \\ b_1k_1 + b_2k_2 = n}} (k_1 k_2)^{\omega-1} &= \sum_{\substack{k=1 \\ b_1k\equiv n\pmod{b_2}}}^{n/b_1-1} k^{\omega-1}\bigg(\frac{n-b_1k}{b_2}\bigg)^{\omega-1} \\
   &= \frac{1}{(b_1b_2)^{\omega-1}} \sum_{\substack{m=1 \\ m\equiv 0\pmod{b_1} \\ m\equiv n\pmod{b_2}}}^{n-1} m^{\omega-1}(n-m)^{\omega-1}
  \end{align*}
  The system of congruences $m\equiv 0\pmod{b_1}$, $m\equiv n\pmod{b_2}$ has a solution $m\equiv r\pmod{\mathrm{lcm}(b_1,b_2)}$ if and only if $\gcd(b_1,b_2)\mid n$. Therefore, by Lemma \ref{basiclem},
  \[ S_2(n) = \frac{\Gamma(\omega)^2}{\Gamma(2\omega)} \frac{\gcd(b_1,b_2)}{(b_1b_2)^{\omega}} n^{2\omega-1}\,\mathbbm{1}_{\gcd(b_1,b_{2})\mid n} + O(n^{\max\{\omega-1,\,2\omega-2\}}). \]
    
  Assume by induction that \eqref{indct} holds for some $\ell\geq 2$. Write $G = \gcd(b_1,\ldots,b_{\ell})$. From Lemma \ref{basiclem}, we get
  \begin{align*}
   S_{\ell+1}(n) &= \sum_{\substack{(k_1,\ldots,k_{\ell}) \in \Z_{\geq 1}^{\ell} \\ b_1k_1+\cdots+ b_{\ell}k_{\ell} \leq n \\ b_1k_1+\cdots+ b_{\ell}k_{\ell} \equiv n \pmod{b_{\ell+1}}}} (k_1\cdots k_{\ell})^{\omega - 1} \bigg(\frac{n-(b_1k_1+\cdots +b_{\ell}k_{\ell})}{b_{\ell + 1}}\bigg)^{\omega-1} \\
   &= \sum_{\substack{m=1 \\ m\equiv n \pmod{b_{\ell+1}}}}^{n-1} S_{\ell}(m)\, \bigg(\frac{n-m}{b_{\ell+1}}\bigg)^{\omega-1} \\
   &= \frac{\Gamma(\omega)^{\ell}}{\Gamma(\ell\omega)} \frac{G}{(b_1\cdots b_{\ell})^{\omega}b_{\ell+1}^{\omega-1}} \sum_{\substack{m=1 \\ m\equiv 0\pmod{G} \\ m\equiv n \pmod{b_{\ell+1}}}}^{n-1} m^{\ell\omega-1} (n-m)^{\omega-1} \,+ \\
   &\hspace{13em}+ O\bigg(\sum_{m=1}^{n-1} (m^{\max\{(\ell-1)\omega-1,\,\ell\omega-2\}}) (n-m)^{\omega-1} \bigg).
  \end{align*}
  By Lemma \ref{basiclem}, the error term is $O(n^{\max\{\ell\omega-1,\, (\ell+1)\omega-2\}})$, and applying Lemma \ref{basiclem} (with $\alpha=\ell\omega$ and $\beta=\omega$) to the main sum gives
  \begin{align*}
   S_{\ell+1}(n) &= \frac{\Gamma(\omega)^{\ell+1}}{\Gamma((\ell+1)\omega)} \frac{Gb_{\ell+1}}{(b_1\cdots b_{\ell+1})^{\omega}} \frac{n^{(\ell+1)\omega-1}}{\mathrm{lcm}(G,b_{\ell+1})}\mathbbm{1}_{\gcd(G,b_{\ell+1})\mid n} \\
   &\hspace{+21em}+ O(n^{\max\{\ell\omega-1,\, (\ell+1)\omega-2\}}).
  \end{align*}
  Since $\frac{Gb_{\ell+1}}{\mathrm{lcm}(G,b_{\ell+1})} = \gcd(G,b_{\ell+1}) = \gcd(b_1,\ldots,b_{\ell+1})$, this proves \eqref{indct} for $\ell+1$, concluding the induction.
 \end{proof}

 Let $\phi$ be a slowly varying function. By uniform convergence (cf. BGT \cite[Theorem 1.2.1]{bingham89}), given $0<\mu<1$, for every $\eps>0$ there exists $x_{\mu,\eps}\in \R$ such that, for every $x\geq x_{\mu,\eps}$,
 \begin{equation*}
  \bigg|\frac{\phi(\lambda x)}{\phi(x)} - 1\bigg| < \eps, \qquad \forall \lambda \in [\mu,1].
 \end{equation*}
 Taking $\mu = 1/j$, $j\in\Z_{\geq 1}$, we define $\xi(x) := j$ for $x\in [x_{\frac{1}{j},\frac{1}{j}}, x_{\frac{1}{j+1}, \frac{1}{j+1}})$. This defines a non-decreasing function $\xi(x)\to \infty$ such that
 \begin{equation}
  \frac{\phi(y)}{\phi(x)} \to 1 \text{ uniformly for } y \in \bigg[\frac{x}{\xi(x)}, x\bigg). \label{eps0}
 \end{equation}
 
 We are now ready to prove Lemma \ref{expgood}.
 
 \begin{proof}[Proof of Lemma \ref{expgood}]
  By Lemma \ref{fundLem}, non-exact solutions and solutions containing some $k_i=0$ do not contribute more than $O_c(\frac{f(n)^{h-1}}{n})$, therefore
  \begin{align*}
   \E(r_{\A,h}(n)) = c^{h}\sum_{\substack{(k_1, \ldots, k_{h}) \in \Z_{\geq 1}^{h} \\ b_1k_1 + \cdots + b_{h}k_{h} = n}} \frac{f(k_1)}{k_1}\cdots\frac{f(k_h)}{k_h} + O_c\bigg(\frac{f(n)^{h-1}}{n}\bigg).
  \end{align*}
  Let $\xi(x)$ be as in \eqref{eps0}. Start by separating the sum into
  \[ S_1 := \sum_{\substack{(k_1,\ldots,k_h)\in \Z_{\geq 1}^h \\ b_1k_1+\cdots+b_h k_h = n \\ \exists j \,|\, k_{j}< n/\xi(n)}} \frac{f(k_1)}{k_1}\cdots\frac{f(k_h)}{k_h},\qquad S_2 := \sum_{\substack{(k_1,\ldots,k_h)\in \Z_{\geq 1}^h \\ b_1k_1 + \cdots + b_h k_h = n \\ \forall j,\, k_{j}\geq n/\xi(n)}} \frac{f(k_1)}{k_1}\cdots\frac{f(k_h)}{k_h}. \]
  
  We start with $S_1$. We have $S_1 \leq \sum_{\ell=1}^{h} S_{1,\ell}$, where
  \[ S_{1,j} := \sum_{\substack{(k_1,\ldots,k_h)\in \Z_{\geq 1}^h \\ b_1k_1+\cdots+b_h k_h = n \\ k_{j}< n/\xi(n)}} \frac{f(k_1)}{k_1}\cdots\frac{f(k_h)}{k_h}. \]
  Let $\mathcal{P}$ be the set of all $h$-tuples $\mathbf{p} = (P_1,\ldots, P_{h})$ with $P_1\in \{1,2,4\ldots, 2^{L}\}$, and $P_j\in \{1,2, 4, \ldots, 2^J\}$ ($2\leq j\leq h$), where $L$ (resp. $J$) is the smallest integer for which $2^L \geq n/\xi(n)$ (resp. $2^J \geq n$), and write
  \[ \sigma_{\mathbf{p}} := \sum_{\substack{(k_1,\ldots,k_h)\in \Z_{\geq 1}^h \\ b_1k_1+\cdots+b_h k_h = n \\ \forall j,\, \frac{P_j}{2} \leq k_j < P_j}} \frac{f(k_1)}{k_1}\cdots \frac{f(k_h)}{k_h}. \]
  We have $S_{1,1} \leq \sum_{\mathbf{p}\in\mathcal{P}} \sigma_{\mathbf{p}}$. The number of terms in $\sigma_{\mathbf{p}}$ is $O(\frac{1}{n} P_1\cdots P_h)$ by Lemma \ref{condB}. Moreover, since $f(x) = x^{\frac{1+\kappa}{h}}\phi(x)^{1/h}$ is regularly varying, we have $\sum_{j=0}^{J} f(2^{j}) \ll_{\eps} f(n)\sum_{j=0}^{J} 2^{-j(\frac{1+\kappa}{h} -\eps)} \ll f(n)$. Hence:
  \begin{align*}
   S_{1,1} \leq \sum_{\mathbf{p}\in\mathcal{P}} \sigma_{\mathbf{p}} &\ll \sum_{\mathbf{p}\in\mathcal{P}} \frac{1}{n}(P_1\cdots P_{h})\, \frac{f(P_1)}{P_1} \cdots \frac{f(P_{h})}{P_{h}} \\
   &= \frac{1}{n} \sum_{\mathbf{p}\in\mathcal{P}} f(P_1)\cdots f(P_h) \\
   &\ll \frac{1}{n}\Bigg( \sum_{j=0}^{L} f(2^{j})\Bigg)\Bigg( \sum_{j=0}^{J} f(2^{j})\Bigg)^{h-1} \\
   &\ll \frac{f(n/\xi(n))f(n)^{h-1}}{n} = o\bigg(\frac{f(n)^{h}}{n}\bigg).
  \end{align*}  
  The terms $S_{1,\ell}$ can be bounded similarly, so it follows that $S_1 = o(f(n)^h/n)$.

  For $S_2$, since $f(x) = x^{\frac{1+\kappa}{h}}\phi(x)^{1/h}$, by the definition of $\xi(x)$ we have
  \begin{align}
   S_2 &= \phi(n) \sum_{\substack{(k_1, \ldots, k_{h}) \in \Z_{\geq 1}^{h} \\ b_1k_1 + \cdots + b_{h}k_{h} = n \\ \forall j,\, k_{j}\geq n/\xi(n)}} (k_1\cdots k_h)^{\frac{1+\kappa}{h}-1} \frac{\phi(k_1)^{1/h}}{\phi(n)^{1/h}}\cdots \frac{\phi(k_h)^{1/h}}{\phi(n)^{1/h}} \nonumber \\
   &\sim \phi(n) \sum_{\substack{(k_1, \ldots, k_{h}) \in \Z_{\geq 1}^{h} \\ b_1k_1 + \cdots + b_{h}k_{h} = n \\ \forall j,\, k_{j}\geq n/\xi(n)}} (k_1\cdots k_h)^{\frac{1+\kappa}{h}-1} \label{eqrwst}
  \end{align}
  Using the same methods used to calculate $S_1$, one can show that
  \begin{equation*}
   S_3 := \phi(n)\sum_{\substack{(k_1,\ldots,k_h)\in \Z_{\geq 1}^h \\ b_1k_1+\cdots+b_hk_h = n \\ \exists j \,|\, k_{j}< n/\xi(n)}} (k_1\cdots k_h)^{\frac{1+\kappa}{h}-1} = o\bigg(\frac{f(n)^{h}}{n}\bigg).
  \end{equation*} 
  Thus, since $1/h \leq \frac{1+\kappa}{h} \leq 1$, we may apply Lemma \ref{expgoodnp}, so \eqref{eqrwst} implies that
  \[ S_2 \sim \frac{\Gamma(\frac{1+\kappa}{h})^{h}}{\Gamma(1+\kappa)} \frac{n^{\kappa}\phi(n)}{(b_1\cdots b_h)^{\frac{1+\kappa}{h}}} \]
  concluding the proof.
 \end{proof}
 
 \begin{rem}[Case $\phi \equiv 1$]\label{phic}
  In this case, $F(x) = x^{\kappa}$ for some $\kappa>0$. We have $\E(|\A\cap[1,x]|) = c\int_{1}^{x} t^{\frac{1+\kappa}{h}-1}\,\mathrm{d}t + O_c(1) = c\frac{h}{1+\kappa}x^{(1+\kappa)/h} + O_c(1)$, and $|\A\cap [1,n]|$ is a boolean polynomial of degree $1$. For $d=1$, applying Theorem \ref{kimvu} taking $\lambda = 2\log n$ then yields
  \[ \Pr\big(\big||\A\cap[1,n]| - \E(|\A\cap[1,n]|)\big| \geq 16\log n\, \E(|\A\cap[1,n]|)^{1/2} \big) \ll n^{-2}. \]
  By the Borel--Cantelli lemma, $|\A\cap[1,x]| \stackrel{\textnormal{a.s.}}{=} c\frac{h}{1+\kappa}x^{(1+\kappa)/h} + O(x^{(1+\kappa)/2h}\log x)$.
  
  Furthermore, redoing the calculations at the beginning of Lemma \ref{expgood}, using Lemma \ref{expgoodnp} we obtain
  \begin{equation*}
   \E(r_{\A,h}(n)) = c^{h} \frac{\Gamma(\frac{1+\kappa}{h})^{h}}{\Gamma(1+\kappa)}\, \frac{n^{\kappa}}{(b_1\cdots b_{h})^{\frac{1+\kappa}{h}}} + O_c\big(n^{\frac{(h-1)(1+\kappa)}{h}-1}\big).
  \end{equation*}
 \end{rem}
 
 \subsection{Proof of Theorem \ref{MT1}}
  From \eqref{excnon} and Lemma \ref{fundLem}, we have $\E(r_{\A,h}(n)) = \E(\rho_{\A,h}(n)) + O_c(\frac{f(n)^{h-1}}{n})$, so $\E(r^{*}_{\A,\ell}(k)) \ll k^{\frac{\ell(1+\kappa)}{h}-1 + o(1)}$ ($1\leq \ell \leq h-1$), and by Lemma \ref{expgood}, $\E(\rho_{\A,h}(n)) \sim c^{h} d_{h,\kappa}\, F(n)$ for a certain $d_{h,\kappa}$. So choose $c := d_{h,\kappa}^{-1/h}$.  \medskip
  
  \noindent
  $\bullet~\textnormal{\underline{Case $\kappa>0$}:}$   
  Let $Y = \rho_{\A,h}(n)$ and take $\lambda = (h+1)\log n$ in Theorem \ref{kimvu}. We have\footnote{The $o(1)$ term in the exponent of the error term comes from the fact that, unless we assume $\phi$ increasing, the best general estimate for $\max_{k\leq n}\phi(k)$ is $n^{o(1)}$.}
  \begin{align*}
   \lambda^{h}\bigg(\frac{E'}{E}\bigg)^{1/2} &\ll (\log n)^{h} \Bigg(\frac{1 + \max\limits_{1\leq \ell\leq h-1} \max\limits_{k \leq n} \E(r^{*}_{\A, \ell}(k))}{\E(\rho_{\A, h}(n))}\Bigg)^{1/2} \\
   &\ll \Bigg(\frac{n^{\max\{0, \frac{(h-1)\kappa}{h}-1\} + o(1)}}{n^{\kappa +o(1)}}\Bigg)^{1/2} = n^{-\frac{\kappa}{2} + \max\{0, \frac{(h-1)\kappa}{2h} - \frac{1}{2}\} + o(1)} \qquad (= o(1)).
  \end{align*}
  Thus,
  \begin{equation}
   \Pr\Bigg(\big|\rho_{\A, h}(n) - \E(\rho_{\A, h}(n))\big| \geq \frac{ 8^{h}\sqrt{h!}}{n^{\frac{\kappa}{2} - \max\{0, \frac{(h-1)\kappa}{2h}-\frac{1}{2}\} + o(1)}}\, \E(\rho_{\A, h}(n)) \Bigg) \ll n^{-2} \label{eqerrortm}
  \end{equation}
  which by the Borel--Cantelli lemma implies that $\rho_{\A,h}(n) \stackrel{\text{a.s.}}{\sim} \E(\rho_{\A,h}(n))$.
  
  To bound the non-exact solutions, note that by \eqref{excnon}, $r_{\A,h}(n) - \rho_{\A,h}(n)$ equals the number of exact solutions to a finite number of linear equations of smaller length, with coefficients bounded by $\max_{i} b_i$. So it suffices to show that $\rho_{\A,t}(n):=\rho_{\A,t}^{(c_1,\ldots,c_{t})}(n) \stackrel{\text{a.s.}}{=} O(n^{\max\{0,(1-\frac{1}{h})\kappa - 1\}+o(1)})$ for every equation $c_1k_1+\cdots+c_t k_t$ produced by non-exact solutions to $b_1k_1 +\cdots +b_{h}k_{h}$, as in \eqref{excnon}. Since $t\leq h-1$, for $Y = \rho_{\A,t}(n)$ we have, in the notation of Lemma \ref{kimvu},
  \begin{align*}
   E' &\ll n^{\max\{0,\frac{(h-2)\kappa}{h} - 1\} + o(1)}, \quad E = \max\{E', n^{(1-\frac{1}{h})\kappa - 1 + o(1)}\},
  \end{align*}
  so it follows that
  \[ \Pr\left(|\rho_{\A,t}(n)- \E(\rho_{\A,t}(n))| \geq 8^{h}\sqrt{h!}\, (\log n)^{h}\, n^{\max\{0,(1-\frac{1}{h})\kappa - 1\}+o(1)}\right) \ll n^{-2}, \]
  which by the Borel--Cantelli lemma implies that $\rho_{\A,t}(n) \stackrel{\text{a.s.}}{\ll} n^{\max\{0,(1-\frac{1}{h})\kappa - 1\}+o(1)}$.\medskip
  
  \noindent
  $\bullet~\textnormal{\underline{Case $\kappa=0$}:}$  
  For $\kappa=0$, we apply Theorem \ref{vuORI} to $\rho_{\A,h}^{(\delta\text{-normal})}(n)$ --- after this, it will suffice to show that $\rho^{(\delta\text{-small})}_{\A,h}(n)$ and $r_{\A,h}(n)-\rho_{\A,h}(n)$ are almost surely $O(1)$. The function $\rho_{\A,h}^{(\delta\text{-normal})}(n)$ is a homogeneous, simple boolean polynomial of degree $h$, with partial derivatives bounded by, for $1\leq j\leq h-1$,
  \begin{align*}
    \E_{j}(\rho_{\A,h}^{(\delta\text{-normal})}(n)) \leq \max_{n^{\delta}\leq k\leq n} \E(r^{*}_{\A,h-j}(k)) &\ll_c \max_{n^{\delta}\leq k\leq n} \frac{f(k)^{h-j}}{k} \ll n^{-\delta\frac{j}{h}+o(1)} 
  \end{align*}
  by Lemma \ref{fundLem}. Each monomial of $\rho_{\A,h}(n)$ appears at most $h!$ times, so 
  \[ Y = \frac{1}{h!}\rho^{(\delta\text{-normal})}_{\A,h}(n) \]
  is a normal polynomial. Thus, taking $0<\alpha < \frac{\delta}{h}$, $\beta = 2$, $K = K(\alpha,\beta,h)$ in Theorem \ref{vuORI}, since $\E(\rho^{(\delta\text{-normal})}_{\A,h}(n))/\log n \to \infty$ as $x\to\infty$ (by Lemmas \ref{fundLem}, \ref{expdsml} and our assumptions), we have
  \[ \Pr\left(|\rho^{(\delta\text{-normal})}_{\A,h}(n) - \E(\rho^{(\delta\text{-normal})}_{\A,h}(n))| \geq (h!\lambda\, \E(\rho^{(\delta\text{-normal})}_{\A,h}(n)))^{1/2} \right) \ll n^{-2} \]
  by taking $\lambda = 32h K\log n$. Since $(\lambda \E(\rho^{(\delta\text{-normal})}_{\A,h}(n)))^{1/2} = o(\E(\rho^{(\delta\text{-normal})}_{\A,h}(n)))$, the Borel--Cantelli lemma implies that $\rho^{(\delta\text{-normal})}_{\A,h}(n) \stackrel{\text{a.s.}}{\sim} \E(\rho^{(\delta\text{-normal})}_{\A,h}(n))$.
  
  To bound $\rho^{(\delta\text{-small})}_{\A,h}(n)$, we use Lemma \ref{prodHat}. By Lemma \ref{fundLem}, we have $\E(r^{*}_{\A,\ell}(n))\ll n^{-\frac{1}{h} +o(1)}$ for $1\leq \ell \leq h-1$, and by Lemma \ref{expdsml} we have $\E(\rho^{(\delta\text{-small})}_{\A,h}(n)) \ll n^{-\frac{1-\delta}{2h} + o(1)}$ (since we can take $\vartheta_{f} = \frac{1}{2h}$ in Lemma \ref{orpiCHAR} (ii)). By the disjointness lemma \ref{disjlm},
  \[ \Pr(\widehat{r}^{*}_{\A,\ell}(n) \geq T) \leq \bigg(\frac{e}{T}\bigg)^{T} n^{-T/h + o(1)} \ll n^{-3} \] 
  for large $T\in \R_{>0}$ --- and similarly for $\widehat{\rho}^{(\delta\text{-small})}_{\A,h}(n)$. Therefore, $\widehat{r}^{*}_{\A,\ell}(n) \stackrel{\text{a.s.}}{\ll} 1$ for $1\leq \ell\leq h-1$ and $\widehat{\rho}^{(\delta\text{-small})}_{\A,h}(n) \stackrel{\text{a.s.}}{\ll} 1$ by the Borel--Cantelli lemma. For each $2\leq \ell \leq h-1$ it follows from the union bound that
  \[ \sum_{k\leq n} \Pr(\widehat{r}^{*}_{\A,\ell}(n) \geq T) \ll n^{-2}, \]
  so again by Borel--Cantelli we have $\max_{k\leq n} \widehat{r}^{*}_{\A,\ell}(k) \stackrel{\text{a.s.}}{\ll} 1$. Plugging this into Lemma \ref{prodHat} yields $\rho^{(\delta\text{-small})}_{\A,h}(n) \stackrel{\text{a.s.}}{\ll} \widehat{\rho}^{(\delta\text{-small})}_{\A,h}(n) \stackrel{\text{a.s.}}{\ll} 1$, as desired.
  
  To bound $r_{\A,h}(n)-\rho_{\A,h}(n)$, by \eqref{excnon} it suffices to bound $\rho_{\A,t}(n):=\rho_{\A,t}^{(c_1,\ldots,c_{t})}(n)$ for $t\leq h-1$. If $t=1$ then $\rho_{\A,t}(n) \leq 1$ trivially. For $t\geq 2$, we apply Lemma \ref{prodHat} again, obtaining $\rho_{\A,t}(n) \stackrel{\text{a.s.}}{\ll} \widehat{\rho}_{\A,t}(n) \stackrel{\text{a.s.}}{\ll} 1$, concluding the proof. \hfill$\square$

\subsection{Proof of Corollary \ref{MT12}}
  With Remark \ref{phic}, we can redo the calculation in the case $\kappa >0$ of the proof of Theorem \ref{MT1}, obtaining 
  \[ \lambda^{h}\bigg(\frac{E'}{E}\bigg)^{1/2} \ll n^{-\frac{\kappa}{2} + \max\{0,\, \frac{(h-1)\kappa}{2h} - \frac{1}{2}\}}(\log n)^{h}. \]
  The equivalent to inequality \eqref{eqerrortm} together with the Borel--Cantelli lemma provides the almost sure estimate
  \begin{align*}
   \rho_{\A,h}(n) \stackrel{\text{a.s.}}{=} \E(\rho_{\A, h}(n)) + O\big(n^{\frac{\kappa}{2} + \max\{0, \frac{(h-1)\kappa}{2h}-\frac{1}{2}\}} (\log n)^{h}\big).
  \end{align*}
  Since $\rho_{\A,t}^{(c_1,\ldots,c_{t})}(n) \stackrel{\text{a.s.}}{\ll} n^{\max\{0,(1-\frac{1}{h})\kappa - 1\}+o(1)}$ for $t\leq h-1$, as shown in the proof of Theorem \ref{MT1}, it follows from \eqref{excnon} and
  \[ \frac{\kappa}{2} + \max\bigg\{0,\, \frac{(h-1)\kappa}{2h} - \frac{1}{2} \bigg\} > \max\bigg\{0,\bigg(1-\frac{1}{h}\bigg)\kappa - 1\bigg\} \]
  that
  \begin{align*}
   r_{\A,h}(n) \stackrel{\text{a.s.}}{=} \E(r_{\A, h}(n)) + O\big(n^{\frac{\kappa}{2} + \max\{0, \frac{(h-1)\kappa}{2h}-\frac{1}{2}\}} (\log n)^{h}\big).
  \end{align*}
  The result then follows from Remark \ref{phic}, by simplifying the expression
  \[ O\big(n^{\frac{(h-1)(1+\kappa)}{h}-1}\big) + O\big(n^{\frac{\kappa}{2} + \max\{0, \frac{(h-1)\kappa}{2h}-\frac{1}{2}\}} (\log n)^{h}\big) \]
  depending on $\kappa$ and $h$. \hfill$\square$

\section{Order of magnitude case: Theorem \ref{MT21}}
 Since Theorem \ref{MT2} contains the case $h=2$ of Theorem \ref{MT21}, assume $h\geq 3$. Let $F$ be a locally integrable increasing function that satisfies $F(2x)\ll F(x)$ and
 \[ \log x \ll F(x) \ll x^{h-1}. \]
 Take $f(x) := (xF(x))^{1/h}$. Note that $f$ satisfies the conditions of Lemma \ref{orpiCHAR}, for $\vartheta = 1/h$. Since $F(x)^{1/h} = f(x)/x^{1/h}$ is increasing, for $1\leq k\leq n$ and $1\leq \ell\leq h-1$ we have
 \[ f(k)^{h-\ell} \leq k^{1-\ell/h} \bigg(\frac{f(n)}{n^{1/h}}\bigg)^{h-\ell}. \]
 Hence, by Lemma \ref{fundLem},
 \begin{align}
  \E(r^{*}_{\A,h-\ell}(k)) \ll_c \frac{f(k)^{h-\ell}}{k} \leq k^{-\ell/h}\, F(n)^{1-\ell/h}. \label{bdINC}
 \end{align}

 We partition $\N=\Z_{\geq 0}$ into
 \begin{equation}
  \begin{aligned}
   \N^{(1)} &:= \{n\in\N ~|~ F(n) \geq (\log n)^{3h^2} \}, \\
   \N^{(2)} &:= \{n\in\N ~|~ F(n) < (\log n)^{3h^2} \}. \\
  \end{aligned}
 \end{equation}
 Our strategy is as follows. On $\N^{(1)}$, expectations are large enough to apply Kim--Vu to $\rho_{\A,\ell}(n)$ and hence to $r_{\A,\ell}(n)$ for $2\leq \ell\leq h$. On $\N^{(2)}$ we first treat $\delta$-normal representations by Vu's theorem \ref{vuORI}, and reduce the remaining contribution to $\delta$-small representations. These are bounded inductively using \eqref{mchdeg} and estimates for $r_{\A,\ell}(n)$ with $\ell\leq h-1$.

 \begin{lem}\label{real-lem1}
  Let $2\leq \ell \leq h-1$, and let $(c_1,\ldots,c_{\ell})\in\Z_{\geq 1}$ with $c_1+\ldots+c_{\ell} \leq \ell \max_{j\leq h} b_j$. Write $r_{\A,\ell}(n) := r_{\A,\ell}^{(c_1,\ldots,c_{\ell})}(n)$. Then we have
  \[ r_{\A,\ell}(n) \stackrel{\textnormal{a.s.}}{\ll} F(n)^{\ell/h} \quad \text{for } n\in \N^{(1)}. \]
 \end{lem}
 \begin{proof}
  It suffices to work with $\rho_{\A,\ell}(n)$. By \eqref{excnon}, the difference $r_{\A,\ell}(n)-\rho_{\A,\ell}(n)$ counts exact solutions to a finite family of equations of smaller length. Hence it suffices to prove that, for each $2\leq t\leq \ell$ and each corresponding coefficient vector $(d_1,\ldots,d_t)$ arising in \eqref{excnon},
  \[ \rho^{(d_1,\ldots,d_t)}_{\A,t}(n) \stackrel{\textnormal{a.s.}}{\ll} F(n)^{t/h} \quad \text{for } n\in\N^{(1)}. \]
  Since there are only finitely many such equations, it is enough to treat a fixed $t$ and a fixed coefficient vector; the argument is identical.

  So let $Y = \rho_{\A,\ell}(n)$ and take $\lambda = (\ell+1)\log n$ in Theorem \ref{kimvu}. By Lemma \ref{fundLem} and \eqref{bdINC} we have\footnote{In fact, one has the sharper bound $\E(\rho_{\A,\ell}(n)) \ll F(n)^{\ell/h}/n^{1-\ell/h}$, but $\E(\rho_{\A,\ell}(n)) \ll F(n)^{\ell/h}$ is sufficient here.}
  \[ \E(\rho_{\A,\ell}(n)) \ll F(n)^{\ell/h}. \]
  Moreover, again by \eqref{bdINC}, for $n\in\N^{(1)}$,
  \[ 1 + \max\limits_{1\leq t\leq \ell-1} \max\limits_{k \leq n} \E(r^{*}_{\A, t}(k)) \ll 1+ F(n)^{(\ell-1)/h} \ll \frac{F(n)^{\ell/h}}{(\log n)^{3h}}. \]
  In the notation of Theorem \ref{kimvu}, this gives $E \ll F(n)^{\ell/h}$ and $E' \ll F(n)^{\ell/h}/(\log n)^{3h}$. Hence
  \[ \lambda^{\ell} (E'E)^{1/2} \ll \frac{\lambda^{\ell}}{(\log n)^{3h/2}} F(n)^{\ell/h} = o(F(n)^{\ell/h}). \]
  Therefore Theorem \ref{kimvu} yields
  \[ \Pr(|\rho_{\A, \ell}(n) - \E(\rho_{\A, \ell}(n))| \geq F(n)^{\ell/h}) \ll n^{\ell-1}e^{-\lambda} = n^{-2}. \]
  By the Borel--Cantelli lemma, $\rho_{\A, \ell}(n) \stackrel{\textnormal{a.s.}}{\ll} F(n)^{\ell/h}$ for $n\in\N^{(1)}$, which concludes the proof.
 \end{proof}

 \begin{lem}\label{rN1}
  $r_{\A,h}(n) \stackrel{\textnormal{a.s.}}{\asymp} F(n)$ for $n\in \N^{(1)}$.
 \end{lem}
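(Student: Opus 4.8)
The plan is to show that $r_{\A,h}(n)$ concentrates around its mean $\asymp F(n)$ for $n\in\N^{(1)}$, via Kim--Vu's inequality (Theorem~\ref{kimvu}); the threshold $F(n)\geq(\log n)^{2h^2}$ is exactly what leaves enough room in the tail estimate. Fix a small $\delta\in(0,1)$, say $\delta=\tfrac12$, and decompose, via \eqref{excnon} and \eqref{deltans},
\[ r_{\A,h}(n)=\rho^{(\delta\textnormal{-normal})}_{\A,h}(n)+\rho^{(\delta\textnormal{-small})}_{\A,h}(n)+\sum_{(c_1,\ldots,c_t)}\rho^{(c_1,\ldots,c_t)}_{\A,t}(n),\qquad t<h. \]
Since $F(x)\ll x^{h-1}$ gives $cf(n)\leq n$ for large $n$, Lemma~\ref{fundLem} yields $\E(r_{\A,h}(n))\asymp c^{h}F(n)$; moreover the lower-length terms have mean $\ll f(n)^{h-1}/n=(nF(n))^{-1/h}F(n)=o(F(n))$ and, by Lemma~\ref{expdsml} with $\vartheta=1/h$, $\E(\rho^{(\delta\textnormal{-small})}_{\A,h}(n))\ll n^{-(1-\delta)/h}F(n)=o(F(n))$, so $\E(\rho^{(\delta\textnormal{-normal})}_{\A,h}(n))=(1+o(1))\E(r_{\A,h}(n))\asymp F(n)$. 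I will treat each of the (finitely many) pieces as a positive simple boolean polynomial, apply Theorem~\ref{kimvu} with $\lambda=(h+1)\log n$, and conclude with Borel--Cantelli over $n\in\N^{(1)}$.

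For the $\delta$-normal part, $\rho^{(\delta\textnormal{-normal})}_{\A,h}(n)$ is homogeneous of degree $h$, and since every surviving coordinate is $\geq n^{\delta}$, taking $j\geq 1$ partial derivatives leaves an $(h-j)$-fold relation of some $m\in[(h-j)n^{\delta},n]$; by \eqref{bdINC} (which holds for every $m\leq n$ because $F(x)^{1/h}=f(x)/x^{1/h}$ is increasing) this has mean $\ll m^{-j/h}F(n)^{(h-j)/h}\ll n^{-\delta j/h}F(n)^{(h-j)/h}$. Hence $E':=\E'(\rho^{(\delta\textnormal{-normal})}_{\A,h}(n))\ll n^{-\delta/h}F(n)^{(h-1)/h}=o(F(n))$, so $E:=\max\{\E(\rho^{(\delta\textnormal{-normal})}_{\A,h}(n)),E'\}=\E(\rho^{(\delta\textnormal{-normal})}_{\A,h}(n))\asymp F(n)$, and for $n\in\N^{(1)}$, using $F(n)^{1/h}\geq(\log n)^{2h}$,
\[ \lambda^{h}\Big(\tfrac{E'}{E}\Big)^{1/2}\ll(\log n)^{h}\big(n^{-\delta/h}F(n)^{-1/h}\big)^{1/2}\ll(\log n)^{h}n^{-\delta/(2h)}(\log n)^{-h}=n^{-\delta/(2h)}=o(1). \]
Theorem~\ref{kimvu} then gives $\Pr(|\rho^{(\delta\textnormal{-normal})}_{\A,h}(n)-E|\geq\tfrac12 E)\ll n^{h-1}e^{-(h+1)\log n}=n^{-2}$; summing over $n\in\N^{(1)}$ and applying Borel--Cantelli, $\rho^{(\delta\textnormal{-normal})}_{\A,h}(n)\stackrel{\as}{\sim}E\asymp F(n)$ on $\N^{(1)}$, which already gives $r_{\A,h}(n)\stackrel{\as}{\gg}F(n)$ there.

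For the matching upper bound I will show the other two pieces are $\stackrel{\as}{=}o(F(n))$ on $\N^{(1)}$, again by Theorem~\ref{kimvu}. For $\rho^{(\delta\textnormal{-small})}_{\A,h}(n)$ and each lower-length $\rho^{(c_1,\ldots,c_t)}_{\A,t}(n)$, the analogous derivative estimate---now \emph{without} the $n^{-\delta j/h}$ gain, since small coordinates are allowed---gives $E'\ll F(n)^{(h-1)/h}$, resp.\ $F(n)^{(t-1)/h}$, using that $f(m)/m^{1/h}$ is essentially increasing (Remark~\ref{unifcn} and Lemma~\ref{orpiCHAR}), so that $\max_{m\leq n}f(m)^{\ell}/m\ll F(n)^{\ell/h}$. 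Combined with the mean bounds above, Theorem~\ref{kimvu} with $\lambda=(h+1)\log n$ bounds the deviation of each such piece by $\ll\lambda^{h}(E'E)^{1/2}$; a short computation shows that this, and the mean, are both $o(F(n))$ precisely because $F(n)\geq(\log n)^{2h^2}$ on $\N^{(1)}$ (in the borderline regime where the mean does not dominate $E'$, the surplus factors $n^{-(1-\delta)/(2h)}$ resp.\ $n^{(t-h)/(2h)}$ supply the decay). Borel--Cantelli over $n\in\N^{(1)}$ then yields $\rho^{(\delta\textnormal{-small})}_{\A,h}(n),\ \rho^{(c_1,\ldots,c_t)}_{\A,t}(n)\stackrel{\as}{=}o(F(n))$, and combining with the previous paragraph, $r_{\A,h}(n)\stackrel{\as}{\asymp}F(n)$ on $\N^{(1)}$.

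The main work is the bookkeeping of the derivative expectations $E'$: one must separate the contribution of small coordinates (which is what destroys the $n^{-\delta j/h}$ gain for the non-normal pieces), use the regularity of $f$ to bound $\max_{m\leq n}f(m)^{\ell}/m$ by $F(n)^{\ell/h}$, and then check, in the few regimes (mean dominating $E'$ or not), that the exponent $2h^{2}$ in the definition of $\N^{(1)}$ beats the $\lambda^{h}\sim(\log n)^{h}$ factor with room to spare. Once these estimates are in place the proof is a routine application of Kim--Vu plus Borel--Cantelli; the complementary range $F(n)<(\log n)^{2h^2}$, where the signal is too weak for Kim--Vu, is handled separately.
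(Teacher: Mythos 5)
Your proof is correct and rests on the same engine as the paper's: Kim--Vu's inequality with $\lambda=(h+1)\log n$, the derivative bound \eqref{bdINC}, the observation that the threshold $F(n)\geq(\log n)^{2h^2}$ forces $(E'/E)^{1/2}\ll F(n)^{-1/(2h)}\leq(\log n)^{-h}$ (exactly cancelling $\lambda^{h}$), and Borel--Cantelli. The one genuine difference is the decomposition: the paper applies Kim--Vu directly to the full exact function $\rho_{\A,h}(n)$ on $\N^{(1)}$, accepting that derivatives at small coordinates make $E'$ as large as $F(n)^{(h-1)/h}$ and settling for constant-factor concentration $\rho_{\A,h}(n)\asymp\E(\rho_{\A,h}(n))$, which is all the lemma claims; the $\delta$-small/$\delta$-normal split is reserved for $\N^{(2)}$, where Vu's Theorem \ref{vuORI} (not Kim--Vu) is the tool and the small-derivative hypothesis is essential. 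Your route splits off $\rho^{(\delta\text{-small})}_{\A,h}(n)$ already in $\N^{(1)}$, which buys asymptotic ($\sim$) rather than order-of-magnitude concentration for the normal part, at the price of one extra Kim--Vu application to the $\delta$-small piece; your case analysis there (mean dominating $E'$ or not) does check out, since in either regime $\lambda^{h}(E'E)^{1/2}\ll F(n)^{1-1/(2h)}(\log n)^{h}\cdot n^{-(1-\delta)/(2h)}=o(F(n))$. Both treatments of the lower-length exact solutions are the same in substance (the paper reduces to $\rho_{\A,h-1}$ and applies Kim--Vu once more). So: same method, a slightly heavier but valid bookkeeping scheme on your side.
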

 \begin{proof}
  Let $Y = \rho_{\A,h}(n)$ and take $\lambda = (h+1)\log n$ in Theorem \ref{kimvu}. By \eqref{bdINC}, for $n\in \N^{(1)}$ and $k\leq n$ we have
  \[ \E(r^{*}_{\A, \ell}(k)) \ll F(n)^{1-1/h} \leq \frac{F(n)}{(\log n)^{3h}} , \]
  and so, by Lemma \ref{fundLem},
  \begin{align*}
   \lambda^h \bigg(\frac{E'}{E}\bigg)^{1/2}
   &\ll (\log n)^{h} \Bigg(\frac{1 + \max\limits_{1\leq \ell\leq h-1} \max\limits_{k \leq n} \E(r^{*}_{\A, \ell}(k))}{\E(\rho_{\A, h}(n))}\Bigg)^{1/2} \\
   &\ll_c (\log n)^{h} \Bigg(\frac{\frac{1}{(\log n)^{3h}} F(n)}{F(n)}\Bigg)^{1/2} = o(1).
  \end{align*}
  Therefore Theorem \ref{kimvu} yields
  \[ \Pr\left(|\rho_{\A, h}(n) - \E(\rho_{\A, h}(n))| \geq \tfrac{1}{2}\, \E(\rho_{\A, h}(n)) \right) \ll n^{-2}. \]
  By the Borel--Cantelli lemma, it follows that $\rho_{\A, h}(n) \stackrel{\textnormal{a.s.}}{\asymp} F(n)$ for $n\in\N^{(1)}$.

  To pass from $\rho_{\A,h}$ to $r_{\A,h}$, we argue as in Lemma \ref{real-lem1}. By \eqref{excnon}, the difference $r_{\A,h}(n)-\rho_{\A,h}(n)$ counts exact solutions to finitely many equations of smaller length. Each such contribution is almost surely $\ll F(n)$ by Lemma \ref{real-lem1} (with $\ell\leq h-1$), and hence $r_{\A,h}(n) \stackrel{\textnormal{a.s.}}{\asymp} F(n)$ for $n\in\N^{(1)}$.
 \end{proof}

 We now work on $\N^{(2)}$. We start by focusing on $\delta$-normal representations.

 \begin{lem}\label{real-lem2}
  In the notation of Lemma \ref{real-lem1}, for $2\leq \ell \leq h-1$ we have
  \[ r^{(\delta\textnormal{-normal})}_{\A,\ell}(n) \stackrel{\textnormal{a.s.}}{\ll} 1 \quad \text{for } n\in \N^{(2)}. \]
 \end{lem}
 \begin{proof}
  As in Lemma \ref{real-lem1}, it suffices to work with $\rho^{(\delta\textnormal{-normal})}_{\A,\ell}(n)$. Indeed, by \eqref{excnon}, to bound $r^{(\delta\textnormal{-normal})}_{\A,\ell}(n)$ it is enough to show that $\rho^{(\delta\textnormal{-normal})}_{\A,t}(n) = \rho^{(\delta\textnormal{-normal})(d_1,\ldots,d_t)}_{\A,t}(n) \stackrel{\textnormal{a.s.}}{\ll} 1$ for every $t\leq \ell$.

  Since the solutions counted by $\rho^{(\delta\textnormal{-normal})}_{\A,\ell}$ only use elements $\geq n^{\delta}$, we may apply a suitable modification of Lemma \ref{prodHat}, obtaining\footnote{This follows by replacing \eqref{mchdeg} by
  \[ r^{(\delta\textnormal{-normal})}_{\A,\ell}(n) \leq \sum_{j=1}^{\ell} \sum_{\substack{k \in S \\ S\in \mathcal{M}}} r^{*}_{\A\cap[n^{\delta},n],\ell-1}(n-b_j k) \leq \ell\cdot \ell! \, \widehat{r}^{(\delta\textnormal{-normal})}_{\A,\ell}(n) \left(\max_{k\leq n} r^{*}_{\A\cap [n^{\delta},n],\ell-1}(k) \right). \]}
  \begin{equation}
   \rho^{(\delta\textnormal{-normal})}_{\A,\ell}(n) \ll \widehat{\rho}^{(\delta\textnormal{-normal})}_{\A,\ell}(n) \left(\max_{n^{\delta}\leq k\leq n} \widehat{r}^{*}_{\A,\ell-1}(k) \right) \cdots \left(\max_{n^{\delta} \leq k\leq n} \widehat{r}^{*}_{\A,2}(k) \right). \label{rhonorm}
  \end{equation}
  Moreover, by \eqref{bdINC} we have $\E(\rho^{(\delta\textnormal{-normal})}_{\A,\ell}(n)) \leq n^{-1/h + o(1)}$ for $n\in\N^{(2)}$, and for $1\leq t\leq \ell-1$ and $n^{\delta} \leq k\leq n$ we have $\E(r^{*}_{\A,\ell-t}(k)) \leq n^{-\delta/h + o(1)}$. Therefore, by the disjointness lemma \ref{disjlm}, for $T>0$ large enough,
  \[ \Pr(\widehat{\rho}^{(\delta\textnormal{-normal})}_{\A,\ell}(n) \geq T) \leq \bigg(\frac{e}{T}\bigg)^T n^{-\frac{1}{h} T + o(1)} \leq n^{-2+o(1)}, \]
  and similarly,
  \begin{align*}
   \Pr\Big(\max_{n^{\delta}\leq k \leq n} \widehat{r}^{*}_{\A,\ell-t}(k) \geq T\Big)
   &\leq \sum_{n^{\delta}\leq k\leq n} \Pr(\widehat{r}^{*}_{\A,\ell-t}(k) \geq T) \\
   &\leq \sum_{n^{\delta}\leq k\leq n} \bigg(\frac{e}{T}\bigg)^T n^{-\delta\frac{1}{h} T + o(1)}
   \leq n^{-2+o(1)}.
  \end{align*}
  By the Borel--Cantelli lemma, all factors on the right-hand side of \eqref{rhonorm} are almost surely bounded, and hence $\rho^{(\delta\textnormal{-normal})}_{\A,\ell}(n) \stackrel{\textnormal{a.s.}}{\ll} 1$ for $n\in\N^{(2)}$.
 \end{proof}

 \begin{lem}\label{rdltN2}
  For $c>1$ sufficiently large, $r^{(\delta\textnormal{-normal})}_{\A,h}(n) \stackrel{\textnormal{a.s.}}{\asymp} F(n)$ for $n\in \N^{(2)}$.
 \end{lem}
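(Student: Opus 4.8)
The plan is to mirror the $\kappa=0$ case of Theorem~\ref{MT1}: apply Vu's concentration inequality (Theorem~\ref{vuORI}) to the \emph{exact}, $\delta$-normal representation function $\rho^{(\delta\textnormal{-normal})}_{\A,h}(n)$, and push everything else into error terms that are $\stackrel{\as}{\ll}1=o(F(n))$. By \eqref{excnon} and \eqref{deltans},
\[ r^{(\delta\textnormal{-normal})}_{\A,h}(n) = \rho^{(\delta\textnormal{-normal})}_{\A,h}(n) + \sum_{(c_1,\ldots,c_t),\ t<h} \rho^{(\delta\textnormal{-normal}),(c_1,\ldots,c_t)}_{\A,t}(n), \]
where the finite sum runs over the reduced equations produced by non-exact solutions (coefficients $\le b_1+\cdots+b_h$, entries still $\ge n^{\delta}$). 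The first term is a homogeneous, simple boolean polynomial of degree $h$, and $\tfrac1{h!}\rho^{(\delta\textnormal{-normal})}_{\A,h}(n)$ is normal, so it is a legitimate input for Theorem~\ref{vuORI}.

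First I would pin down the mean and the low-order derivatives on $\N^{(2)}$. Since $n\in\N^{(2)}$ forces $cf(n)=c(nF(n))^{1/h}\ll c\,n^{1/h}(\log n)^{2h}=o(n)$, Lemma~\ref{fundLem} gives $\E(r_{\A,h}(n))\gg c^{h}F(n)$; Lemma~\ref{expdsml} (with $\vartheta=1/h$) bounds the $\delta$-small part by $O(c^{h}n^{-(1-\delta)/h}F(n))=o(c^{h}F(n))$, and Lemma~\ref{fundLem} the non-exact part by $O_c(f(n)^{h-1}/n)=o(F(n))$; with the matching upper bound of Lemma~\ref{fundLem} this yields $\E(\rho^{(\delta\textnormal{-normal})}_{\A,h}(n))\asymp_c F(n)$, and --- crucially --- a lower bound of shape $\E(\rho^{(\delta\textnormal{-normal})}_{\A,h}(n))\ge d\,c^{h}\log n$ with $d>0$ \emph{independent of $c$} (using $\log x\ll F(x)$). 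For the mixed partial of order $j$, a surviving term is dominated by $r^{*}_{\A,h-j}(m)$ for some $m$ which is a sum of $h-j\ge1$ entries each $\ge n^{\delta}$ (so $m\ge n^{\delta}$) and $m\le n$, whence \eqref{bdINC} and $F(m)\le F(n)<(\log n)^{2h^2}$ give
\[ \E_j\big(\rho^{(\delta\textnormal{-normal})}_{\A,h}(n)\big)\ll_c \frac{f(m)^{h-j}}{m}=m^{-j/h}F(m)^{(h-j)/h}\le n^{-\delta j/h}(\log n)^{2h^2}, \]
which for $n$ large is $\le n^{-\alpha}$ with $\alpha:=\delta/(2h)$, for every $1\le j\le h-1$.

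Next I would feed $\tfrac1{h!}\rho^{(\delta\textnormal{-normal})}_{\A,h}(n)$ into Theorem~\ref{vuORI} with $d=h$, $\alpha=\delta/(2h)$, $\beta=2$, obtaining $K=K(h,\alpha,\beta)$, and take $\lambda=32hK\log n$. Because the lower bound for $\E(\rho^{(\delta\textnormal{-normal})}_{\A,h}(n))$ is a $c$-independent constant times $c^{h}\log n$ while $\lambda$ is a $c$-independent constant times $\log n$, choosing $c$ large enough (depending only on $h$ and $\delta$) makes both $h!\,\lambda\le\E(\rho^{(\delta\textnormal{-normal})}_{\A,h}(n))$ and $(h!\,\lambda\,\E(\rho^{(\delta\textnormal{-normal})}_{\A,h}(n)))^{1/2}\le\tfrac12\,\E(\rho^{(\delta\textnormal{-normal})}_{\A,h}(n))$. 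Theorem~\ref{vuORI} then yields
\[ \Pr\!\Big(\big|\rho^{(\delta\textnormal{-normal})}_{\A,h}(n)-\E(\rho^{(\delta\textnormal{-normal})}_{\A,h}(n))\big|\ge\tfrac12\,\E(\rho^{(\delta\textnormal{-normal})}_{\A,h}(n))\Big)\ll n^{-2}, \]
and Borel--Cantelli gives $\rho^{(\delta\textnormal{-normal})}_{\A,h}(n)\stackrel{\as}{\asymp}F(n)$ on $\N^{(2)}$.

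Finally, to absorb the reduced terms: each $\rho^{(\delta\textnormal{-normal}),(c_1,\ldots,c_t)}_{\A,t}(n)$ with $t<h$ has, by Lemma~\ref{fundLem}, expectation $\ll_c f(n)^{h-1}/n\ll_c n^{-1/h}(\log n)^{2h^2}$ on $\N^{(2)}$, so by Lemma~\ref{disjlm}, $\Pr(\widehat\rho^{(\delta\textnormal{-normal}),(c_1,\ldots,c_t)}_{\A,t}(n)\ge k_0)\ll_{c,k_0}n^{-k_0/h}(\log n)^{2h^2k_0}$, summable over $n\in\N^{(2)}$ once $k_0>h$; Borel--Cantelli gives $\widehat\rho^{(\delta\textnormal{-normal}),(c_1,\ldots,c_t)}_{\A,t}(n)\stackrel{\as}{\ll}1$ on $\N^{(2)}$. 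A version of Lemma~\ref{prodHat} adapted to $\delta$-normal solutions (carry the constraint ``entry $\ge n^{\delta}$'', hence ``argument $\ge n^{\delta}$'', through \eqref{mchdeg}) bounds $\rho^{(\delta\textnormal{-normal}),(c_1,\ldots,c_t)}_{\A,t}(n)$ by this maximal disjoint family times $\prod_{s=2}^{t-1}\max_{n^{\delta}\le m\le n}\widehat r^{*}_{\A,s}(m)$, and since $F(m)\le F(n)<(\log n)^{2h^2}$ for all $m\le n$ with $n\in\N^{(2)}$, the same disjointness argument (now with the union over $m\in[n^{\delta},n]$, which makes the bound decay like a negative power of $n$ times a polylog) shows each such maximum is $\stackrel{\as}{\ll}1$ on $\N^{(2)}$. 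As $F(n)\gg\log n\to\infty$, all reduced terms are $\stackrel{\as}{=}o(F(n))$, giving $r^{(\delta\textnormal{-normal})}_{\A,h}(n)\stackrel{\as}{\asymp}F(n)$ on $\N^{(2)}$. I expect the main obstacle to be precisely that on $\N^{(2)}$ the mean $\E(\rho^{(\delta\textnormal{-normal})}_{\A,h}(n))$ can be merely a constant multiple of $\log n$, so Theorem~\ref{vuORI} gives only $\rho=\E+O((\log n\cdot\E)^{1/2})$ rather than asymptotic concentration; extracting $\asymp$ from this forces one to keep explicit track of the factor $c^{h}$ in the lower bound of Lemma~\ref{fundLem} and enlarge $c$, and to restrict \emph{every} union bound to $\N^{(2)}$, since the estimates on $\E(r^{*}_{\A,s})$ used above blow up on $\N^{(1)}$, where $F$ can be polynomially large.
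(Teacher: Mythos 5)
Your proposal is correct and follows essentially the same route as the paper: apply Theorem \ref{vuORI} to the exact $\delta$-normal polynomial with derivative bounds $\E_j \ll n^{-\delta j/h+o(1)}$ coming from \eqref{bdINC} and $F(m)\le F(n)<(\log n)^{2h^2}$ on $\N^{(2)}$, take $\lambda=32hK\log n$ and enlarge $c$ so the ($c$-independent-constant times) $c^h\log n$ lower bound on the mean dominates, then kill the lower-length contributions via the disjointness lemma and a $\delta$-normal variant of Lemma \ref{prodHat}. The only cosmetic difference is that you treat each reduced equation $\rho^{(\delta\textnormal{-normal}),(c_1,\ldots,c_t)}_{\A,t}$ separately, whereas the paper reduces (as in the footnote to the proof of Theorem \ref{MT1}) to the single bound $\rho^{(\delta\textnormal{-normal})}_{\A,h-1}(n)\stackrel{\as}{\ll}1$.
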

 \begin{proof}
  In the notation of Theorem \ref{vuORI}, for $n\in\N^{(2)}$ we have by \eqref{bdINC} that for every $1\leq j\leq h-1$,
  \[ \E_{j}(\rho^{(\delta\textnormal{-normal})}_{\A,h}(n)) \leq \max_{n^{\delta}\leq k\leq n} \E(r^{*}_{\A,h-j}(k)) \leq n^{-\delta j/h + o(1)}. \]
  We apply Theorem \ref{vuORI} with $Y=\frac{1}{h!}\rho^{(\delta\textnormal{-normal})}_{\A, h}(n)$ and $\beta = 2$. By Lemmas \ref{fundLem}, \ref{expdsml}, we can choose $c>1$ large enough so that $\E(\frac{1}{h!}\rho^{(\delta\textnormal{-normal})}_{\A, h}(n)) \gg \min\{cf(n),n\}^{h}/n \geq 32 h K \log n$. Taking $\lambda = 32h K\log n$, we obtain
  \[ \Pr\left(|\rho^{(\delta\textnormal{-normal})}_{\A,h}(n) - \E(\rho^{(\delta\textnormal{-normal})}_{\A,h}(n))| \geq (h!\lambda\, \E(\rho^{(\delta\textnormal{-normal})}_{\A,h}(n)))^{1/2} \right) \ll n^{-2}. \]
  By the Borel--Cantelli lemma, it follows that $\rho^{(\delta\textnormal{-normal})}_{\A, h}(n) \stackrel{\textnormal{a.s.}}{\asymp} F(n)$ for $n\in \N^{(2)}$.

  As in Lemma \ref{rN1}, to pass from $\rho^{(\delta\textnormal{-normal})}_{\A,h}$ to $r^{(\delta\textnormal{-normal})}_{\A,h}$ it suffices to bound the contributions of exact solutions of smaller length. This is exactly Lemma \ref{real-lem2} (applied with $\ell\leq h-1$), and the proof is complete.
 \end{proof}

 All that is left to show is that $r^{(\delta\textnormal{-small})}_{\A,h}(n) \stackrel{\textnormal{a.s.}}{\ll} F(n)$ in $\N^{(2)}$. This is a consequence of the following lemma:

 \begin{lem}\label{real-lem3}
  In the notation of Lemma \ref{real-lem1}, for $2\leq \ell \leq h$ we have
  \[ r^{(\delta\textnormal{-small})}_{\A,\ell}(n) \stackrel{\textnormal{a.s.}}{\ll} F(n)^{(\ell-1)/h} \quad \text{for } n\in \N^{(2)}. \]
 \end{lem}
 \begin{proof}
  By \eqref{mchdeg}, we have
  \[ r^{*(\delta\textnormal{-small})}_{\A,\ell}(n) \ll \widehat{r}^{*(\delta\textnormal{-small})}_{\A,\ell}(n) \left(\max_{k\leq n} r^{*}_{\A,\ell-1}(k) \right). \]
  By Lemma \ref{expdsml} and \eqref{bdINC}, and since $F$ is increasing (so we may take $\vartheta = \vartheta_f = 1/h$ in Lemma \ref{orpiCHAR} (ii)), we have
  \[ \E(r^{*(\delta\textnormal{-small})}_{\A,\ell}(n)) \ll n^{-(1-\delta)\vartheta} F(n)^{\ell/h} = n^{-(1-\delta)/h + o(1)} \qquad (\text{for } n\in\N^{(2)}). \]
  Hence, by the disjointness lemma \ref{disjlm}, for large $T>0$,
  \[ \Pr(\widehat{r}^{*(\delta\textnormal{-small})}_{\A,\ell}(n) \geq T) \leq \bigg(\frac{e}{T}\bigg)^T n^{-(1-\delta) T/h + o(1)} \leq n^{-2+o(1)}, \]
  so by the Borel--Cantelli lemma, $\widehat{r}^{*(\delta\textnormal{-small})}_{\A,\ell}(n) \stackrel{\textnormal{a.s.}}{\ll} 1$. Therefore,
  \begin{equation}
   r^{*(\delta\textnormal{-small})}_{\A,\ell}(n) \stackrel{\textnormal{a.s.}}{\ll} \max_{k\leq n} r^{*}_{\A,\ell-1}(k) \quad \text{for } n\in \N^{(2)}.\label{bdbymax}
  \end{equation}

  We prove the lemma by induction on $\ell$. For $\ell=2$, it follows from \eqref{bdbymax} that $r^{*(\delta\textnormal{-small})}_{\A,2}(n) \stackrel{\textnormal{a.s.}}{\ll} \max_{k\leq n} \mathbbm{1}_{\A}(k) \leq 1$ for $n\in \N^{(2)}$, which is $\ll F(n)^{1/h}$ since $F(n) \gg \log n$.

  Suppose now that the claim holds for some $2\leq \ell \leq h-1$, so $r^{*(\delta\textnormal{-small})}_{\A,\ell}(n) \stackrel{\textnormal{a.s.}}{\ll} F(n)^{(\ell-1)/h}$ for $n\in\N^{(2)}$. For $n\in\N^{(2)}$, every representation counted by $r^{*}_{\A,\ell}(n)$ is either $\delta$-normal or $\delta$-small, hence
  \[ r^{*}_{\A,\ell}(n) \leq r^{*(\delta\textnormal{-normal})}_{\A,\ell}(n) + r^{*(\delta\textnormal{-small})}_{\A,\ell}(n) \stackrel{\textnormal{a.s.}}{\ll} 1 + F(n)^{(\ell-1)/h} \stackrel{\textnormal{a.s.}}{\ll} F(n)^{\ell/h}. \]
  For $n\in\N^{(1)}$ we have $r^{*}_{\A,\ell}(n) \stackrel{\textnormal{a.s.}}{\ll} F(n)^{\ell/h}$ by Lemma \ref{real-lem1}. Consequently,
  \[ \max_{k\leq n} r^{*}_{\A,\ell}(k) \stackrel{\textnormal{a.s.}}{\ll} \max_{k\leq n} F(k)^{\ell/h} \leq F(n)^{\ell/h}, \]
  since $F$ is increasing. Plugging this into \eqref{bdbymax} gives $r^{*(\delta\textnormal{-small})}_{\A,\ell+1}(n) \stackrel{\textnormal{a.s.}}{\ll} F(n)^{\ell/h}$ for $n\in\N^{(2)}$, completing the induction.
 \end{proof}

 Putting Lemmas \ref{rN1}, \ref{rdltN2}, and the case $\ell=h$ of Lemma \ref{real-lem3} together yields Theorem \ref{MT21}.

\section{Order of magnitude case: Theorem \ref{MT2}}
 The case $h=2$ was already considered in Subsection \ref{caseh2}, so suppose $h\geq 3$. Let $f$ be a positive locally integrable real function satisfying $\int_{1}^{x} \frac{f(t)}{t}\,\mathrm{d}t \asymp f(x)$ in the range
 \begin{equation}
  (n\,\psi(n))^{1/h} \ll f(n) \ll (n\,\psi(n))^{1/(h-1)}, \label{rangenidl}
 \end{equation}
 where $\psi(x) \gg \log x$ is some increasing slowly varying function, and select some 
 \begin{equation}
   1-\frac{1}{4h(1-(h-1)\min\{\vartheta_f,\frac{1}{h}\})} < \delta < 1, \label{dltcnd}
 \end{equation}
 where $\vartheta_f$ is as in Lemma \ref{orpiCHAR} (ii).
 
 \begin{lem}\label{collage}
  For $\delta>0$ as in \eqref{dltcnd}, and for $c>1$ sufficiently large, we have
  \[ \rho^{(\delta\textnormal{-normal})}_{\A,h}(n) \stackrel{\textnormal{a.s.}}{\asymp} \dfrac{f(n)^{h}}{n}. \]
 \end{lem}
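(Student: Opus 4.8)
The plan is to follow the pattern of Lemmas~\ref{rN1}--\ref{rdltN2}: first fix the order of magnitude of $\E(\rho^{(\delta\textnormal{-normal})}_{\A,h}(n))$, then show that $\rho^{(\delta\textnormal{-normal})}_{\A,h}(n)$ concentrates around it, using Kim--Vu's inequality where $f(n)^h/n$ is large and Vu's inequality where it is small. Set $F(x):=f(x)^h/x$ and partition $\N=\N^{(1)}\sqcup\N^{(2)}$ exactly as in \eqref{partN}. For the expectation, \eqref{excnon} gives $\E(\rho_{\A,h}(n))=\E(r_{\A,h}(n))+O_c(f(n)^{h-1}/n)$; as $f(n)\ll n^{1/(h-1)+o(1)}\ll n$ for $h\geq3$, Lemma~\ref{fundLem} yields $\E(r_{\A,h}(n))\gg c^h f(n)^h/n$ with implied constant free of $c$, while by Lemma~\ref{expdsml} removing the $\delta$-small exact solutions costs only $\ll c^h n^{-(1-\delta)\vartheta}f(n)^h/n$. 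Hence $\E(\rho^{(\delta\textnormal{-normal})}_{\A,h}(n))\asymp_c f(n)^h/n$ for all large $n$, the matching upper bound being $\leq\E(r_{\A,h}(n))$.

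Next I would record one estimate for the expected partial derivatives that feeds both concentration theorems. Differentiating $\rho^{(\delta\textnormal{-normal})}_{\A,h}(n)$ in $j$ variables leaves $h-j$ variables, all $\geq n^\delta$, forming an exact $\delta$-normal solution of some $m$; since the $j$ fixed variables are also $\geq n^\delta$, one has $m\in[c'n^\delta,n]$ for a suitable $c'=c'(b)>0$. By Lemma~\ref{fundLem} (applied to the relevant $r^{*}$) and item~\ref{itemii} of Lemma~\ref{orpiCHAR} taken with the admissible exponent $\vartheta:=\min\{\vartheta_f,\tfrac1h\}$ (which gives $f(m)\ll(m/n)^{\vartheta}f(n)$ uniformly for $m\leq n$), one gets, for $1\leq j\leq h-1$,
\[
\E_j\big(\rho^{(\delta\textnormal{-normal})}_{\A,h}(n)\big)\ \ll_c\ \max_{c'n^{\delta}\leq m\leq n}\frac{f(m)^{h-j}}{m}\ \ll_c\ n^{-(1-\delta)(h-j)\vartheta-\delta}\,f(n)^{h-j},
\]
the last step because $m\mapsto m^{(h-j)\vartheta-1}n^{-(h-j)\vartheta}f(n)^{h-j}$ is decreasing, $(h-j)\vartheta<1$. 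The purpose of \eqref{dltcnd} becomes visible here: with $D:=1-(h-1)\vartheta\in(\tfrac1h,1]$, the hypothesis reads $\delta>1-\tfrac1{4hD}>1-\tfrac1{hD}=\tfrac{(h-1)(1/h-\vartheta)}{1-(h-1)\vartheta}\geq\tfrac{(h-j)(1/h-\vartheta)}{1-(h-j)\vartheta}$, and the last quantity is exactly the threshold past which the exponent $(h-j)/h-\delta-(1-\delta)(h-j)\vartheta$ is negative for every $j$, with a uniform gap supplied by the factor $4$.

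Then I would treat the two pieces. On $\N^{(1)}$, $\rho^{(\delta\textnormal{-normal})}_{\A,h}(n)$ is a positive, simple boolean polynomial of degree $h$, and combining the display with $f(n)\gg n^{1/h}$, $F(n)\geq(\log n)^{2h^2}$ and the trivial $\E_h\leq h!$ gives $E'/E\ll n^{-3/(4h)}+F(n)^{-1}\ll(\log n)^{-2h^2}$, where $E=\E(\rho^{(\delta\textnormal{-normal})}_{\A,h}(n))$ for large $n$; so Theorem~\ref{kimvu} with $\lambda=(h+1)\log n$ confines $\rho^{(\delta\textnormal{-normal})}_{\A,h}(n)$ to within $o\big(\E(\rho^{(\delta\textnormal{-normal})}_{\A,h}(n))\big)$ with probability $\geq1-O(n^{-2})$. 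On $\N^{(2)}$, $F(n)<(\log n)^{2h^2}$ forces $f(n)^{h-j}<n^{(h-j)/h+o(1)}$, so the display becomes $\E_j(\rho^{(\delta\textnormal{-normal})}_{\A,h}(n))<n^{-\alpha}$ for a fixed $\alpha>0$ and $1\leq j\leq h-1$; since $\tfrac1{h!}\rho^{(\delta\textnormal{-normal})}_{\A,h}(n)$ is normal (each monomial appears $\leq h!$ times) and, for $c>1$ large, $\E(\tfrac1{h!}\rho^{(\delta\textnormal{-normal})}_{\A,h}(n))\gg_c\psi(n)\gg\log n$ exceeds $32hK\log n$ (with $K=K(h,\alpha,2)$ the constant of Theorem~\ref{vuORI}), that theorem with $\beta=2$ and $\lambda=32hK\log n$ again confines $\rho^{(\delta\textnormal{-normal})}_{\A,h}(n)$ to within $o\big(\E(\rho^{(\delta\textnormal{-normal})}_{\A,h}(n))\big)$ with probability $\geq1-O(n^{-2})$. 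Two applications of Borel--Cantelli together with $\N=\N^{(1)}\sqcup\N^{(2)}$ then yield $\rho^{(\delta\textnormal{-normal})}_{\A,h}(n)\stackrel{\textnormal{a.s.}}{\asymp}f(n)^h/n$.

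The main obstacle is exactly the derivative estimate and the bookkeeping that turns it into a usable hypothesis on each piece. Neither concentration inequality covers the whole range: on $\N^{(1)}$ the quantities $\E_j$ can be a positive power of $n$ (so Vu's theorem does not apply), whereas on $\N^{(2)}$ one only has $E'/E\asymp1/\psi(n)$, which need not beat $(\log n)^{-2h}$ (so Kim--Vu does not apply), and this is what forces the split at $F(n)=(\log n)^{2h^2}$. Within $\N^{(2)}$, the subtle point is that the crude exponent $(h-j)/h-\delta$ need not be negative once $h\geq5$, so one must keep the extra saving $(1-\delta)(h-j)\vartheta$ and work with $\vartheta\leq\tfrac1h$ rather than $\vartheta_f$; tuning this is precisely the content of the awkward-looking lower bound on $\delta$ in \eqref{dltcnd}. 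The remainder — tracking which implied constants are allowed to depend on $c$, and noting that $c$ must be taken large only for the $\N^{(2)}$ application of Vu's theorem — is routine.
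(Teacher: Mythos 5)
Your proposal is correct and takes essentially the same route as the paper's proof: a two-regime split, applying Kim--Vu's inequality where the expectation is large and Vu's inequality where it is small, with the partial-derivative expectations controlled via Lemma \ref{orpiCHAR}\ref{itemii} and condition \eqref{dltcnd}, and with $c$ taken large only to make Vu's theorem applicable. The differences are cosmetic --- the paper cuts at $\E(\rho^{(\delta\textnormal{-normal})}_{\A,h}(n)) = n^{1/2h}$ rather than at $f(n)^h/n = (\log n)^{2h^2}$, and bounds all derivatives by the $j=1$ case rather than tracking the $j$-dependence as you do --- and your verification that \eqref{dltcnd} is precisely the threshold making every exponent negative is a nice check on that condition.
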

 \begin{proof}
  Partition $\N = \Z_{\geq 0}$ into
  \begin{align*}
   \N^{(1)} := \bigg\{ n\in \N ~\bigg|~ \frac{f(n)^h}{n} \leq n^{1/2h}\bigg\}, \qquad \N^{(2)} := \bigg\{ n\in \N ~\bigg|~ \frac{f(n)^h}{n} > n^{1/2h} \bigg\}.
  \end{align*}
  By Lemmas \ref{fundLem}, \ref{expdsml}, we have 
  \[ \frac{\min\{cf(n),n\}^{h}}{n} \ll \E(\rho^{(\delta\textnormal{-normal})}_{\A,h}(n)) \ll c^{h} \frac{f(n)^{h}}{n}. \]
  Using \eqref{dltcnd}, we can bound the derivatives of $\rho^{(\delta\textnormal{-normal})}_{\A, h}(n)$. Let $\vartheta^{*} := \min\{\vartheta,\frac{1}{h}\}$, where $\vartheta = \vartheta_f$ is as in Lemma \ref{expdsml} (ii). The function $f(x)/x^{\vartheta^{*}}$ is almost increasing, meaning $f(k)/k^{\vartheta^{*}} \ll f(n)/n^{\vartheta^{*}}$ for $n^{\delta}\leq k\leq n$. Rearranging, this implies $f(k) \ll (\frac{n}{k})^{-\vartheta^{*}} f(n)$, and further $\frac{f(k)^{h-1}}{k} \ll (\frac{n}{k})^{1-(h-1)\vartheta^{*}} \frac{f(n)^{h-1}}{n}$. Since $1-(h-1)\vartheta^{*} > 0$, and $k$ is minimized at $n^{\delta}$, we obtain from \eqref{dltcnd}
  \begin{align}
    \max_{n^{\delta}\leq k\leq n} \E(r^{*(\delta\textnormal{-normal})}_{\A, \ell}(k)) \ll_c \max_{n^{\delta}\leq k\leq n} \frac{f(k)^{h-1}}{k} &\ll n^{(1-\delta)(1-(h-1)\vartheta^{*})}\, \frac{f(n)^{h-1}}{n} \nonumber \\
    &< n^{1/4h} \frac{f(n)^{h-1}}{n} \label{eesrt1}
  \end{align}
  for $1\leq \ell\leq h-1$. We study the partitions $\N^{(1)}$ and $\N^{(2)}$ separately.\medskip
  
  \noindent
  $\bullet~\underline{\N^{(1)}}:$
  For $n\in \N^{(1)}$, by \eqref{rangenidl} and \eqref{eesrt1} we have
  \begin{align*}
   \max_{n^{\delta}\leq k \leq n} \E(r^{*(\delta\textnormal{-normal})}_{\A, \ell}(k)) &\ll n^{1/4h}\, \frac{f(n)^{h}}{n} \frac{1}{f(n)} \\
   &< n^{\frac{1}{4h} + \frac{1}{2h} - \frac{1}{h} + o(1)} = n^{-1/4h +o(1)}.
  \end{align*}
  Since the expected values of the derivatives of $Y= \frac{1}{h!}\rho^{(\delta\textnormal{-normal})}_{\A, h}(n)$ are all bounded from above by $O_c(n^{-\alpha})$ in $\N^{(1)}$ for some $\alpha>0$, we apply Theorem \ref{vuORI} with $\beta = 2$. In the notation of Theorem \ref{vuORI}, for $c>1$ large enough we can guarantee that $\E(\frac{1}{h!}\rho^{(\delta\textnormal{-normal})}_{\A, h}(n)) \gg \min\{cf(n),n\}^{h}/n \geq 32 h K \log n$, therefore
  \[ \Pr\left(|\rho^{(\delta\text{-normal})}_{\A,h}(n) - \E(\rho^{(\delta\text{-normal})}_{\A,h}(n))| \geq (h!\lambda\, \E(\rho^{(\delta\text{-normal})}_{\A,h}(n)))^{1/2} \right) \ll n^{-2} \]
  taking $\lambda = 32h K\log n$. By the Borel--Cantelli lemma, $\rho^{(\delta\textnormal{-normal})}_{\A, h}(n) \stackrel{\textnormal{a.s.}}{\asymp} \frac{f(n)^h}{n}$ in $\N^{(1)}$.\medskip
  
  \noindent
  $\bullet~\underline{\N^{(2)}}:$
  Taking $Y = \rho^{(\delta\textnormal{-normal})}_{\A, h}(n)$ and $\lambda = (h+1)\log n$ in Theorem \ref{kimvu}, by Lemma \ref{fundLem} and \eqref{eesrt1} we have
  \[ (\log n)^{h} \Bigg(\frac{1 + \max\limits_{1\leq \ell\leq h-1} \max\limits_{n^{\delta} \leq k \leq n} \E(r^{*}_{\A, \ell}(k))}{\E(\rho^{(\delta\textnormal{-normal})}_{\A, h}(n))}\Bigg)^{1/2} \ll_c n^{-\alpha} \qquad \text{ for }n\in \N^{(2)} \] 
  for some $\alpha>0$. Thus,
  \[ \Pr\bigg(\big|\rho^{(\delta\textnormal{-normal})}_{\A, h}(n) - \E(\rho^{(\delta\textnormal{-normal})}_{\A, h}(n))\big| \geq Dn^{-\alpha} \E(\rho^{(\delta\textnormal{-normal})}_{\A, h}(n)) \bigg) \leq n^{-2} \]
  for some $D = D_{c}$, which by the Borel--Cantelli lemma implies the lemma.
 \end{proof}
 
 \begin{proof}[Proof of Theorem \ref{MT2}]
  It remains to bound $\rho_{\A,h}^{(\delta\textnormal{-small})}(n)$ and, by \eqref{excnon}, $\rho_{\A,t}^{(c_1,\ldots,c_t)}(n)$ for $t\leq h-1$. By the range of $f$ \eqref{rangenidl} and Lemma \ref{fundLem}, we have $\E(r^{*}_{\A,\ell}(k)) \ll_c k^{-\alpha}$ for some $\alpha>0$ for all $1\leq \ell \leq h-2$, and $\E(r^{*}_{\A,h-1}(k)) \ll_c \psi(k)$. By the disjointness lemma \ref{disjlm}, we have
  \[ \Pr(\widehat{r}^{*}_{\A,\ell}(k) \geq T) \leq \bigg(\frac{e}{T}\bigg)^T k^{-\alpha T} \leq k^{-2} \qquad (1\leq \ell \leq h-2),  \]
  and, since $\psi(x) \gg \log x$,
  \begin{align*}
   \Pr\big(\widehat{r}^{*}_{\A,h-1}(k) \geq T \psi(k) \big) &\leq \bigg(\frac{e}{T}\bigg)^{-T\psi(k)} \ll k^{-2},
  \end{align*}
  both for large real $T = T_c>0$. By Lemma \ref{expdsml}, choosing $T$ large enough yields
  \begin{align*}
   \Pr\bigg(\widehat{\rho}^{(\delta\textnormal{-small})}_{\A,h}(n) \geq c^{h-1}\frac{T}{\psi(n)} \frac{f(n)^h}{n} \bigg) &\leq \bigg(\frac{e \,\E(r^{(\delta\textnormal{-small})}_{\A,h}(n))}{c^{h-1}\frac{T}{\psi(n)}\frac{f(n)^h}{n}}\bigg)^{c^{h-1}\frac{T}{\psi(n)}\frac{f(n)^h}{n}} \\
   &\leq (n^{-(1-\delta)\vartheta + o(1)})^{c^{h-1}\frac{T}{\psi(n)}\frac{f(n)^h}{n}} \\
   &\leq n^{-2+o(1)}.
  \end{align*}
  Thus, by the Borel--Cantelli lemma, we have for $k\leq n$:
  \begin{equation*}
   \begin{gathered}
    \widehat{r}^{*}_{\A,\ell}(k) \stackrel{\textnormal{a.s.}}{\ll} 1 \quad (1\leq \ell \leq h-2),\qquad \widehat{r}^{*}_{\A,h-1}(k) \stackrel{\textnormal{a.s.}}{\ll} \psi(k) \leq \psi(n),\\
    \widehat{\rho}^{(\delta\text{-small})}_{\A,h}(n) \stackrel{\textnormal{a.s.}}{\ll} \frac{1}{\psi(n)}\frac{f(n)^{h}}{n}. 
   \end{gathered}
  \end{equation*}
  By Lemma \ref{prodHat}, it follows that $\rho^{(\delta\text{-small})}_{\A,h}(n) \stackrel{\textnormal{a.s.}}{\ll} \frac{f(n)^{h}}{n}$ and $\rho^{(c_1,\ldots,c_t)}_{\A,t}(n) \stackrel{\textnormal{a.s.}}{\ll} \psi(n)$.
 \end{proof}

\section{What if \texorpdfstring{$F(x) \ll \log x$}{F(x) << log x}?}\label{secwi}
 Let $0< \eps < 1$, and define
 \[ \Pr(n\in \A) := \min\bigg\{c\frac{f(n)}{n},\,1\bigg\} \]
 for each $n\in \Z_{\geq 1}$, where $f(x) = (x \log x)^{1/h}$ and
 \[ c := (1-\eps)^{1/h}\frac{(b_1\cdots b_h)^{1/h^2}}{\Gamma(1/h)}, \]
 so that, by \eqref{slln1}, $|\A \cap [1,x]| \sim hc (x\log x)^{1/h}$. By Lemma \ref{expgood}, we have
 \begin{equation}
  \E(r_{\A,h}(n)) \sim c^{h} \Gamma(1/h)^{h} \frac{\log n}{(b_1\cdots b_h)^{1/h}} \leq (1-\tfrac{\eps}{2}) \log n \label{one8}
 \end{equation}
 for large $n$. To show that $\{r_{\A,h}(n)=0\}$ has high probability, the following inequality will be used:
 
 \begin{lem}[Correlation inequality]\label{correq}
  Let $\Omega$ be a finite set, and $\mathcal{R}$ be a random subset where the events $\{\omega_1\in\mathcal{R}\}$, $\{\omega_2\in\mathcal{R}\}$ are independent for every $\omega_1\neq \omega_2\in\Omega$. Let $S_1$, $\ldots$, $S_n \subseteq \Omega$ be distinct subsets, and suppose that the events $E_i =\{S_i\subseteq \mathcal{R}\}$ satisfy $\Pr(E_i)\leq 1/2$. Then:
  \[ \prod_{i=1}^{n} \Pr( \overline{E}_i) \leq \Pr\bigg( \bigwedge_{i=1}^n \overline{E}_i \bigg) \leq \bigg(\prod_{i=1}^{n} \Pr( \overline{E}_i)\bigg) e^{2\Delta}, \]
  where
  \[ \Delta = \sum_{\substack{1\leq i < j \leq n \\ E_i\cap E_j \neq \varnothing}} \Pr(E_i\wedge E_j) \]
  and $\overline{E}_i$ is the complement of $E_i$.
 \end{lem}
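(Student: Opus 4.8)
The plan is to recognize the statement as the combination of Harris's correlation inequality (for the lower bound) and Janson's inequality (for the upper bound), realized on the product probability space $\{0,1\}^{\Omega}$ equipped with the coordinatewise partial order, in which $\mathcal{R}$ is a random point. For the lower bound, observe that each $\overline{E}_i=\{S_i\not\subseteq\mathcal{R}\}$ is a decreasing event (a down-set), and that an intersection of decreasing events is decreasing; Harris's inequality (the FKG inequality for product measures) says decreasing events are nonnegatively correlated, so induction on $n$ together with the two-event case gives $\Pr(\bigwedge_{i=1}^n\overline{E}_i)\ge\prod_{i=1}^n\Pr(\overline{E}_i)$ at once. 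As a by-product, since $\Pr(E_i)\le\tfrac12$ forces $\Pr(\overline{E}_i)\ge\tfrac12$, every partial conjunction $\bigwedge_{j\le k}\overline{E}_j$ has probability $\ge2^{-k}>0$, so all of the conditional probabilities used below are well defined.

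For the upper bound I would telescope,
\[
\Pr\Big(\bigwedge_{i=1}^n\overline{E}_i\Big)=\prod_{i=1}^n\big(1-p_i\big),\qquad p_i:=\Pr\Big(E_i\,\Big|\,\bigwedge_{j<i}\overline{E}_j\Big),
\]
so that the whole argument reduces to a lower bound on each $p_i$. Fix $i$ and split $\{1,\dots,i-1\}$ into the ``dependent'' indices $D_i:=\{\,j<i:S_j\cap S_i\neq\varnothing\,\}$ and the rest; set $B:=\bigwedge_{j\in D_i}\overline{E}_j$ and let $C$ be the conjunction of the remaining $\overline{E}_j$'s, so $\bigwedge_{j<i}\overline{E}_j=B\wedge C$. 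Discarding the denominator in $p_i=\Pr(E_i\wedge B\mid C)/\Pr(B\mid C)$ gives $p_i\ge\Pr(E_i\wedge B\mid C)$, and writing $E_i\wedge B=E_i\setminus\bigcup_{j\in D_i}(E_i\wedge E_j)$ and applying a union bound gives $p_i\ge\Pr(E_i\mid C)-\sum_{j\in D_i}\Pr(E_i\wedge E_j\mid C)$. Two facts then close the estimate: $E_i$ depends only on the coordinates in $S_i$, which are disjoint from every coordinate appearing in $C$, so $\Pr(E_i\mid C)=\Pr(E_i)$; and each $E_i\wedge E_j=\{S_i\cup S_j\subseteq\mathcal{R}\}$ is an increasing event while $C$ is decreasing, so Harris's inequality in the conditional form $\Pr(\text{increasing}\mid\text{decreasing})\le\Pr(\text{increasing})$ yields $\Pr(E_i\wedge E_j\mid C)\le\Pr(E_i\wedge E_j)$. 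Hence $1-p_i\le\Pr(\overline{E}_i)+\sum_{j\in D_i}\Pr(E_i\wedge E_j)$.

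To finish, I would factor $\Pr(\overline{E}_i)$ out of each term, use $\Pr(\overline{E}_i)\ge\tfrac12$ (so $1/\Pr(\overline{E}_i)\le2$), then $1+x\le e^x$, and finally $\sum_{i}\sum_{j\in D_i}\Pr(E_i\wedge E_j)\le\Delta$ (the left-hand side sums $\Pr(E_i\wedge E_j)$ over exactly the pairs $i\neq j$ with $S_i\cap S_j\neq\varnothing$), obtaining
\[
\Pr\Big(\bigwedge_{i=1}^n\overline{E}_i\Big)=\prod_{i=1}^n(1-p_i)\le\Big(\prod_{i=1}^n\Pr(\overline{E}_i)\Big)\exp\!\Big(2\sum_{i}\sum_{j\in D_i}\Pr(E_i\wedge E_j)\Big)\le\Big(\prod_{i=1}^n\Pr(\overline{E}_i)\Big)e^{2\Delta}.
\]
The delicate step is the lower bound on $p_i$: one must carve out precisely the indices whose sets meet $S_i$, so that the leftover conditioning event $C$ is independent of $E_i$, and then reinsert Harris's inequality to strip the conditioning from the pairwise correlation terms $\Pr(E_i\wedge E_j\mid C)$. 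The remaining ingredients — the telescoping, the union bound, and the final product manipulation — are routine.
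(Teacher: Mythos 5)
Your proof is correct: it is precisely the standard Boppana--Spencer/Janson argument (Harris--FKG for the lower bound, then the telescoping conditional decomposition with the dependent indices $D_i$ carved out for the upper bound), which is exactly the source the paper cites in lieu of a proof. All the delicate steps check out, including the use of $\Pr(\overline{E}_i)\geq\tfrac12$ to produce the factor $2$ in the exponent and the observation that summing over $j\in D_i$, $j<i$ recovers $\Delta$.
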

 \begin{proof}
  Boppana--Spencer \cite{bopspe89}.
 \end{proof}
 
 \begin{lem}\label{final2}
  $\displaystyle \sum_{n\geq 1} \Pr(r_{\A,h}(n) = 0) = \infty$.
 \end{lem}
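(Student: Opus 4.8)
The plan is to realize $\{r_{\A,h}(n)=0\}$ as an intersection of complements of ``solution events'' and to lower-bound its probability by the lower (Harris--FKG) half of Lemma~\ref{correq}. For a nonnegative-integer solution $(k_1,\dots,k_h)$ of $b_1k_1+\cdots+b_hk_h=n$, set $S:=\{k_1,\dots,k_h\}\setminus\{0\}$ and $E_S:=\{S\subseteq\A\}$, and let $\mathscr{S}_n$ be the (finite) collection of all such $S$. Since $0\in\A$ almost surely, a tuple lies in $\A^h$ precisely when its nonzero support lies in $\A$, so $\{r_{\A,h}(n)=0\}=\bigcap_{S\in\mathscr{S}_n}\overline{E_S}$, and by independence $\Pr(E_S)=\prod_{v\in S}\Pr(\A\ni v)$.

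First I would check the hypothesis $\Pr(E_S)\le\tfrac12$ needed to invoke Lemma~\ref{correq}. Because $f(x)/x=(\log x)^{1/h}x^{-(h-1)/h}\to0$, from $n=\sum_i b_ik_i\le h\,(\max_i b_i)\,(\max_j k_j)$ every solution of $n$ has some coordinate $k_j\ge n/(h\max_i b_i)$, whose membership probability is $\min\{cf(k_j)/k_j,\,1\}\ll(\log n)^{1/h}n^{-(h-1)/h}$. Hence $\max_{S\in\mathscr{S}_n}\Pr(E_S)\to0$ as $n\to\infty$; in particular $\Pr(E_S)\le\tfrac12$ for all $S$ once $n$ is large, so the lower bound in Lemma~\ref{correq} gives $\Pr(r_{\A,h}(n)=0)\ge\prod_{S\in\mathscr{S}_n}\bigl(1-\Pr(E_S)\bigr)$.

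Next I would control $\sum_{S\in\mathscr{S}_n}\Pr(E_S)$. Writing $\E(r_{\A,h}(n))=\sum_{(k_1,\dots,k_h)}\prod_{v\in\{k_1,\dots,k_h\}}\Pr(\A\ni v)$ and using $\Pr(\A\ni0)=1$, each ordered solution contributes exactly $\Pr(E_S)$ for $S$ its nonzero support, and each $S\in\mathscr{S}_n$ is the support of at least one ordered solution; therefore $\sum_{S\in\mathscr{S}_n}\Pr(E_S)\le\E(r_{\A,h}(n))$, which by \eqref{one8} is at most $(1-\tfrac\eps2)\log n$ for $n$ large. Since $\max_S\Pr(E_S)=o(1)$, the inequality $\log(1-x)\ge-x/(1-x)$ yields $\sum_S\log(1-\Pr(E_S))\ge-(1+o(1))\sum_S\Pr(E_S)\ge-(1-\tfrac\eps4)\log n$ for all large $n$, hence $\Pr(r_{\A,h}(n)=0)\ge n^{-1+\eps/4}$ eventually, and $\sum_n\Pr(r_{\A,h}(n)=0)\ge\sum_{n\text{ large}}n^{-1+\eps/4}=\infty$.

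The one genuine subtlety — and what makes the crude FKG bound suffice — is the normalization: the constant $c$ is chosen exactly so that $\E(r_{\A,h}(n))\sim(1-\eps)\log n<\log n$, and this strict deficit is precisely what converts $\prod_S(1-\Pr(E_S))\approx e^{-\sum_S\Pr(E_S)}$ into something of size $n^{-1+\eps/4}$ rather than $n^{-1-\delta}$ for a fixed $\delta>0$. Accordingly, the steps demanding care are (i) the uniform decay $\max_{S}\Pr(E_S)\to0$ (so that $1-\Pr(E_S)$ may be replaced by $e^{-(1+o(1))\Pr(E_S)}$ and the hypothesis $\Pr(E_S)\le\tfrac12$ holds), which rests on the decay of $f(x)/x$; and (ii) the bookkeeping $\sum_S\Pr(E_S)\le\E(r_{\A,h}(n))$, where one must observe that collapsing repeated coordinates and the deterministic membership of $0$ only decrease $\Pr(E_S)$. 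The clustering term $\Delta$ of Lemma~\ref{correq} is not needed for this lemma; it enters only when upgrading $\sum_n\Pr(r_{\A,h}(n)=0)=\infty$ to the almost-sure infinitely-often conclusion of Theorem~\ref{MT3}.
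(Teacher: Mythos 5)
Your proposal is correct and follows essentially the same route as the paper: both invoke the lower (Harris--FKG) half of Lemma~\ref{correq}, use the fact that every solution has a coordinate of size $\gg n$ to make the event probabilities small, bound $\sum_S \Pr(E_S)$ by $\E(r_{\A,h}(n)) \le (1-\tfrac{\eps}{2})\log n$ via \eqref{one8}, and convert $\prod_S(1-\Pr(E_S))$ into a power of $n$ strictly greater than $n^{-1}$ using $1-x\ge e^{-x/(1-x)}$. Your handling of repeated coordinates and of $0\in\A$ when passing from ordered tuples to supports is a minor tidying of the paper's bookkeeping, not a different argument.
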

 \begin{proof}
  Let 
  \begin{equation}
   \mathcal{S}[n] = \big\{(k_1,\ldots,k_h)\in \Z_{\geq 0}^h ~|~ b_1 k_1+\cdots+b_h k_h= n \big\}. \label{defS}
  \end{equation}
  Since for every solution $R=(k_1,\ldots,k_h)$ in $\mathcal{S}[n]$ there is some $j$ for which $k_{j} \geq n/h(\max_i b_i)$, for large $n$ we have $\Pr(R) \leq \eps/2$, and hence 
  \[ 1-\Pr(R) \geq e^{-\frac{\Pr(R)}{1-\Pr(R)}} \geq e^{-\frac{2}{2-\eps}\Pr(R)}. \]
  It follows from Lemma \ref{correq} and \eqref{one8} that, for large $n$,
  \begin{align*}
   \Pr(r_{\A,h}(n) = 0) = \Pr\Bigg(\bigwedge_{R\in\mathcal{S}[n]} \overline{R} \Bigg) &\geq \prod_{R\in \mathcal{S}[n]} (1-\Pr(R)) \\
   &\geq e^{-\frac{2}{2-\eps}\sum_{R\in\mathcal{S}[n]}\Pr(R)} \\
   &= e^{-\frac{2}{2-\eps}\,\E(r_{\A,h}(n))} \geq \frac{1}{n},
  \end{align*}
  so $\sum_{n\geq 1} \Pr(r_{\A,h}(n)=0)$ diverges.
 \end{proof} 
 
 To prove Theorem \ref{MT3}, we will use Lemma \ref{final2} together with the fact that the random variables $r_{\A,h}(n)$ ($n\geq 1$) have low correlation. To this end, we apply a generalization of the second Borel--Cantelli lemma due to Kochen and Stone:
 
 \begin{lem}[Kochen--Stone]\label{kcst}
  Let $\{E_n\}_{n\geq 1}$ be a family of events in some probability space, and suppose that $\sum_{n\geq 1} \Pr(E_n) = \infty$. Then,
  \[ \Pr(E_n, \textnormal{infinitely often}) \geq \limsup_{N\to\infty} \frac{\sum_{1\leq n,m \leq N} \Pr(E_n)\Pr(E_{m})}{\sum_{1\leq n , m \leq N} \Pr(E_n \wedge E_m)}.  \]
 \end{lem}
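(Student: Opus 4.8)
The plan is to run a second-moment argument (the Paley--Zygmund inequality, which here is just Cauchy--Schwarz) on the truncated tails of the counting sum, and then pass to a limit over the truncation point. First I would fix $k\geq 1$ and set $S_N^{(k)} := \sum_{n=k}^{N}\mathbbm{1}_{E_n}$, so that $\E(S_N^{(k)}) = \sum_{k\le n\le N}\Pr(E_n)$ and $\E\big((S_N^{(k)})^2\big) = \sum_{k\le n,m\le N}\Pr(E_n\wedge E_m)$. Since $\sum_n\Pr(E_n)=\infty$, both quantities tend to $\infty$ as $N\to\infty$ for every fixed $k$ (the second because $\E\big((S_N^{(k)})^2\big)\ge \E(S_N^{(k)})^2$ by Jensen).

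The core step is the bound
\[ \Pr\big(S_N^{(k)}>0\big)\ \ge\ \frac{\E(S_N^{(k)})^2}{\E\big((S_N^{(k)})^2\big)}, \]
which follows from $\E(S_N^{(k)}) = \E\big(S_N^{(k)}\mathbbm{1}_{\{S_N^{(k)}>0\}}\big)\le \E\big((S_N^{(k)})^2\big)^{1/2}\Pr\big(S_N^{(k)}>0\big)^{1/2}$. Since the events $\{S_N^{(k)}>0\}$ increase to $\bigcup_{n\ge k}E_n$ as $N\to\infty$, letting $N\to\infty$ gives $\Pr\big(\bigcup_{n\ge k}E_n\big)\ge \limsup_N \E(S_N^{(k)})^2/\E\big((S_N^{(k)})^2\big)$. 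To match the right-hand side with the stated ratio, I would use $\E\big((S_N^{(k)})^2\big)\le \sum_{1\le n,m\le N}\Pr(E_n\wedge E_m)=:b_N$ and $\E(S_N^{(k)}) = a_N - C_k$, where $a_N:=\sum_{1\le n\le N}\Pr(E_n)$ and $C_k:=\sum_{1\le n<k}\Pr(E_n)$ is a finite constant. Then
\[ \limsup_{N\to\infty}\frac{\E(S_N^{(k)})^2}{\E\big((S_N^{(k)})^2\big)}\ \ge\ \limsup_{N\to\infty}\frac{(a_N-C_k)^2}{b_N}\ =\ \limsup_{N\to\infty}\frac{a_N^2}{b_N}\ =:\ L, \]
the last equality because $(1-C_k/a_N)^2\to 1$ as $a_N\to\infty$; and $a_N^2 = \sum_{1\le n,m\le N}\Pr(E_n)\Pr(E_m)$, which is precisely the numerator in the statement. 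Hence $\Pr\big(\bigcup_{n\ge k}E_n\big)\ge L$ for every $k$.

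To finish, I would write $\{E_n\textnormal{ infinitely often}\}=\bigcap_{k\ge 1}\bigcup_{n\ge k}E_n$ as a \emph{decreasing} intersection and apply continuity from above: $\Pr(E_n\textnormal{ i.o.})=\lim_{k\to\infty}\Pr\big(\bigcup_{n\ge k}E_n\big)\ge L$, as desired. The step I expect to need the most care is exactly the passage from ``some $E_n$ with $n\ge k$ occurs'' — which is all the second-moment bound yields for a fixed $k$ — to ``infinitely many $E_n$ occur''; this is handled by working with the tails $S_N^{(k)}$ from the start rather than with $S_N=S_N^{(1)}$, together with the observation that deleting the finitely many initial terms leaves the limiting ratio unchanged because $a_N\to\infty$. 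Everything else is a routine second-moment computation.
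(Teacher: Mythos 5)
Your proof is correct: the Paley--Zygmund bound applied to the tail sums $S_N^{(k)}$, the monotone limit in $N$, the observation that dropping the first $k$ terms does not change the limiting ratio because $a_N\to\infty$, and the final continuity-from-above step are all sound. The paper does not prove this lemma at all --- it simply cites Lemma 2 of Yan --- and your argument is exactly the standard second-moment proof underlying that reference, so there is nothing further to compare.
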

 \begin{proof}
  Cf. Lemma 2 of Yan \cite{yan06}.
 \end{proof}

 In the notation of \eqref{defS}, our events of interest $E_n$ are of the form
 \[ \{r_{\A,h}(n) = 0\} = \bigwedge_{R\in \mathcal{S}[n]} \overline{R}. \]
 By Lemma \ref{correq}, we have, for any $m>n\geq 1$,
 \begin{equation}
  \Pr(r_{\A,h}(n) =0) \geq \prod_{R\in \mathcal{S}[n]}\Pr(\overline{R}) \label{cqand0}
 \end{equation}
 and
 \begin{equation}
  \Pr(r_{\A,h}(n) =0 \text{ and } r_{\A,h}(m) = 0) \leq \bigg(\prod_{R\in \mathcal{S}[n]}\Pr(\overline{R})\bigg) \bigg(\prod_{S\in \mathcal{S}[m]} \Pr(\overline{S})\bigg) e^{\Delta(n,m)}, \label{cqand} 
 \end{equation}
 where
 \begin{equation*}
  \Delta(n,m) := \Bigg(\sum_{\substack{R,S\in \mathcal{S}[n] \\ R \cap S \neq \varnothing}} +\ 2\sum_{\substack{R\in \mathcal{S}[n],\, S\in \mathcal{S}[m] \\ R \cap S \neq \varnothing}} + \sum_{\substack{R,S\in \mathcal{S}[m] \\ R \cap S \neq \varnothing}}\Bigg) \Pr(R\wedge S). 
 \end{equation*}

 \begin{lem}\label{Deltacalc}
  $\displaystyle \Delta(n,m) \ll \bigg(1+ \frac{1}{(m-n)^{\frac{1}{h}+o(1)}}\bigg)\frac{1}{n^{\frac{1}{h}+o(1)}} + \frac{1}{m^{\frac{1}{h}+o(1)}}$ 
 \end{lem}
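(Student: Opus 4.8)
The plan is to expand $\Delta(n,m)$ directly and bound it by second-moment--type sums. Write $p_v:=\Pr(\A\ni v)=\min\{cf(v)/v,1\}$, so $p_0=1$ and, with $f(x)=(x\log x)^{1/h}$, $p_v\ll_\eps v^{-(h-1)/h}(\log v)^{1/h}$ for $v\ge1$, while $\Pr(W\subseteq\A)=\prod_{v\in W}p_v$ for $W\subseteq\Z_{\ge0}$. As the events $\overline R$ in \eqref{cqand} depend only on the \emph{value-set} $\bar R$ (the set of coordinates of a solution $R$), the terms of $\Delta(n,m)$ are of the form $\Pr\big((\bar R\cup\bar S)\subseteq\A\big)=\prod_{v\in\bar R\cup\bar S}p_v$ summed over pairs of distinct value-sets with $\bar R\cap\bar S\neq\varnothing$, split into three groups according to whether each value-set comes from $\mathcal S[n]$ or from $\mathcal S[m]$. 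The main input is Lemma~\ref{fundLem}: summing $\Pr(\,\cdot\subseteq\A)$ over all value-sets of length-$\ell$ solutions of a target $N$ is $\le\E(r^{*}_{\A,\ell}(N))\ll_\eps N^{\ell/h-1}(\log N)^{\ell/h}$ as $N\to\infty$, and $O(1)$ for bounded $N$.

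Next I would organize each group by $u:=|\bar R\cap\bar S|\in\{1,\dots,h-1\}$ (the value $u=h$ forces $\bar R=\bar S$). With $U=\bar R\cap\bar S$, $A=\bar R\setminus U$, $B=\bar S\setminus U$ pairwise disjoint, one has the \emph{exact} factorization $\Pr\big((\bar R\cup\bar S)\subseteq\A\big)=\Pr(U\subseteq\A)\,\Pr(A\subseteq\A)\,\Pr(B\subseteq\A)$. Dropping the (only restrictive) requirement $A\cap B=\varnothing$ and summing over $A,B$ first via Lemma~\ref{fundLem}: once $U$ and the coordinate positions it occupies in the two solutions are fixed, $A$ ranges over value-sets of length-$(h-u)$ solutions of a residual target $N_A\le\max\{n,m\}$, so $\sum_A\Pr(A\subseteq\A)\ll_\eps N_A^{-u/h+o(1)}$, and likewise for $B$. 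Each group then reduces to a sum over $U$ of $\Pr(U\subseteq\A)\,N_A(U)^{-u/h+o(1)}N_B(U)^{-u/h+o(1)}$, with only finitely many position-assignments.

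The main case is $u=1$, so $U=\{k\}$ and $N_A=n-b_ik$, $N_B=n-b_{i'}k$ (or with $m$ in the cross group). For the diagonal $n$-group this contributes $\ll_\eps\sum_k p_k\,(n-b_ik)^{-1/h+o(1)}(n-b_{i'}k)^{-1/h+o(1)}$; inserting $p_k\ll_\eps k^{-(h-1)/h}(\log k)^{1/h}$ (the boundedly many $k$ with $p_k=1$ absorbed) gives a Beta-type sum $\ll n^{o(1)}\sum_{k<n}k^{-(h-1)/h}(n-k)^{-2/h}$, which is $\asymp n^{-1/h+o(1)}$ since $\tfrac{h-1}{h}+\tfrac{2}{h}=\tfrac{h+1}{h}$; the diagonal $m$-group gives $m^{-1/h+o(1)}$. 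The cross group gives $\ll n^{o(1)}\sum_k k^{-(h-1)/h}(n-b_ik)^{-1/h}(m-b_{i'}k)^{-1/h}$; substituting $j=n-b_ik$ (so $m-b_{i'}k$ is comparable to $(m-n)+j$) and splitting the range of $j$ according to its size relative to $m-n$ and to $n$ --- bounding $(m-b_{i'}k)^{-1/h}\le(m-n)^{-1/h}$ in one sub-range produces the factor $\tfrac{1}{(m-n)^{1/h+o(1)}}$ --- one obtains $\ll\big(1+\tfrac{1}{(m-n)^{1/h+o(1)}}\big)n^{-1/h+o(1)}+m^{-1/h+o(1)}$. Finally, for $u\ge2$ I would bound $\Pr(U\subseteq\A)\le\prod_{v\in U}p_v$, decompose the several values of $U$ into dyadic blocks, and apply Lemma~\ref{condB} to count configurations in each block; since the $b$-weighted sum of $U$ is $\le n$ (or $\le m$), the resulting multi-Beta sum collapses to a contribution $\ll n^{-u/h+o(1)}+m^{-u/h+o(1)}\ll n^{-1/h+o(1)}+m^{-1/h+o(1)}$. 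Summing the three groups over $1\le u\le h-1$ then yields the lemma.

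The delicate part will be the cross-group estimate for $u=1$: one must track the residual $m-b_{i'}k$ as $k$ varies and, in particular, handle the regime where $n-b_ik$ or $m-b_{i'}k$ is bounded --- there the uniform estimate $\E(r^{*}_{\A,h-1}(N))\ll N^{-1/h+o(1)}$ must be replaced by $O(1)$ while $p_k\asymp n^{-(h-1)/h+o(1)}$ --- and verify that in every regime ($m-n$ small or large relative to $n$) the exponents combine to at most the asserted bound. The remaining bookkeeping hazards are the non-uniformity of $p_v$ (it equals $1$ for small $v$), the degenerate solutions with repeated coordinate values (which are fewer by a factor $\asymp N^{-1}$ and hence lower order), and, when $u\ge2$, the dyadic decomposition required to use Lemma~\ref{condB}.
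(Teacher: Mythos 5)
Your proposal is correct and follows essentially the same route as the paper: decompose by the intersection $I=R\cap S$ of size $\ell$, factor $\Pr(R\wedge S)=\Pr(I)\Pr(R\setminus I)\Pr(S\setminus I)$, bound the sums over the complements by $\E(r^{*}_{\A,h-\ell}(\cdot))$ via Lemma \ref{fundLem}, and reduce to Beta-type convolution sums, with the cross term picking up the $(m-n)^{-1/h+o(1)}$ factor from the residual target being $\geq m-n$. The ``delicate'' regimes you flag do close exactly as you sketch, so no further work is needed.
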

 \begin{proof}
  We start by estimating the sum over $R,S\in\mathcal{S}[m]$ (the $\mathcal{S}[n]$ case is analogous). If $R\cap S =: I \neq \varnothing$, we have $\Pr(R\wedge S) = \Pr(I) \Pr(R\setminus I) \Pr(S\setminus I)$. Thus, from Lemma \ref{fundLem}, as $f(n) = (n\log n)^{1/h}$, we have 
  \begin{align*}
   \sum_{\substack{R,S\in \mathcal{S}[m] \\ R \cap S \neq \varnothing}} \Pr(R\wedge S) &= \sum_{\ell=1}^{h-1} \sum_{\substack{I\subseteq \{0,\ldots,m\}\\ |I|= \ell}} \Pr(I)\Bigg(\sum_{\substack{R,S\in \mathcal{S}[m] \\ R\cap S = I}} \Pr(R\setminus I)\Pr(S\setminus I)\Bigg) \\
   &\leq \sum_{\ell=1}^{h-1} \sum_{k\leq m} \E(r_{\A,\ell}^{*}(k)) \E(r_{\A,h-\ell}^{*}(m-k))^2 \\
   &\ll \sum_{\ell=1}^{h-1} \sum_{k\leq m} \frac{f(k)^{\ell}}{k}\bigg(\frac{f(m-k)^{h-\ell}}{m-k} \bigg)^2 \\
   &\ll \sum_{k\leq m} \frac{f(k)}{k} \bigg(\frac{f(m-k)^{h-1}}{m-k} \bigg)^2 \ll m^{-\frac{1}{h} + o(1)}.
  \end{align*}

  Similarly, for $R\in \mathcal{S}[n]$, $S\in \mathcal{S}[m]$ with $m>n$, we have
  \begin{align*}
   \sum_{\substack{R\in \mathcal{S}[n],\,S\in \mathcal{S}[m] \\ R \cap S \neq \varnothing}} \Pr(R\wedge S) &= \sum_{\ell=1}^{h-1} \sum_{\substack{I\subseteq \{0,\ldots,m\}\\ |I|= \ell}} \Pr(I)\Bigg(\sum_{\substack{R\in\mathcal{S}[n],\, S\in \mathcal{S}[m] \\ R\cap S = I}} \Pr(R\setminus I)\Pr(S\setminus I)\Bigg) \\
   &\ll \sum_{k\leq n} \frac{f(k)}{k} \bigg(\frac{f(n-k)^{h-1}}{n-k} \bigg) \bigg(\frac{f(m-k)^{h-1}}{m-k} \bigg) \\
   &\ll (m-n)^{-\frac{1}{h}+o(1)} \sum_{k\leq n} \frac{f(k)}{k} \bigg(\frac{f(n-k)^{h-1}}{n-k} \bigg) \\
   &\ll (n(m-n))^{-\frac{1}{h} + o(1)},
  \end{align*}
  completing the proof.
 \end{proof}
 
 By Lemma \ref{Deltacalc}, for every $\delta>0$ there is $K\geq 1$ such that $\Delta(n,m)<\delta$ if $m>n\geq K$. Write $E_n := \{r_{\A,h}(n) = 0\}$. By \eqref{cqand0}, \eqref{cqand} we have
 \begin{align*}
  \sum_{\substack{1\leq n<m\leq N \\ n\leq K}} \Pr(E_n\wedge E_m) &\leq e^{\max \Delta(m,n)} \Bigg(\sum_{n=1}^{K} \prod_{R\in \mathcal{S}[n]}\Pr(\overline{R})\Bigg) \Bigg(\sum_{m\leq N} \prod_{S\in \mathcal{S}[m]}\Pr(\overline{S}) \Bigg) \\
  &\ll K \sum_{m\leq N} \Pr(E_m) = o\bigg(\bigg(\sum_{m\leq N} \Pr(E_m)\bigg)^2\bigg).
 \end{align*}
 Thus, we have by Lemma \ref{kcst} that
 \begin{align*}
  \Pr(E_n\text{ infinitely often}) &\geq \limsup_{N\to\infty} \frac{\sum_{K\leq n<m \leq N} \Pr(E_n)\Pr(E_m)}{\sum_{K\leq n < m \leq N} \Pr(E_n \wedge E_m)} \\
  &\geq \limsup_{N\to\infty} \frac{\displaystyle\sum_{K\leq n<m \leq N} \bigg(\prod_{R\in \mathcal{S}[n]}\Pr(\overline{R})\bigg)\bigg(\prod_{S\in \mathcal{S}[m]}\Pr(\overline{S})\bigg)}{\displaystyle\sum_{K\leq n < m \leq N}\bigg(\prod_{R\in \mathcal{S}[n]}\Pr(\overline{R})\bigg)\bigg(\prod_{S\in \mathcal{S}[m]}\Pr(\overline{S})\bigg) e^{\Delta(n,m)}} \\
  &\geq e^{-\delta}
 \end{align*}
 for every $\delta > 0$. This concludes the proof of Theorem \ref{MT3}. 
 
%
 
\addtocontents{toc}{\protect\setcounter{tocdepth}{0}}
\section*{Acknowledgements}
 I would like to thank Andrew Granville for his guidance and advice during the writing of this article. I also thank Cihan Sabuncu and Tony Haddad for many fruitful discussions.
 
\addtocontents{toc}{\protect\setcounter{tocdepth}{1}}

\bibliographystyle{amsplain}
\bibliography{$HOME/Academie/Recherche/_latex/bibliotheca}%
\end{document}